\colorlet{colorSouvik}{orange}
\newtheorem{thm}{Theorem}
\newtheorem{prop}[thm]{Proposition}
\newtheorem{cor}[thm]{Corollary}
\theoremstyle{definition}
\newtheorem{lem}[thm]{Lemma}
\newtheorem{eg}[thm]{Example}
\newtheorem{ques}[thm]{Question}
\newtheorem{rem}[thm]{Remark}
\newtheorem{convention}[thm]{Convention}
\theoremstyle{remark}
\newtheorem*{claim*}{Claim}
\numberwithin{equation}{thm}
\def \rfd {\operatorname{Rfd}}  
\def \Max {\operatorname{MAx}}
\def \Tr {\operatorname {Tr}}
\newcommand{\p}{\mathfrak{p}}
\newcommand{\m}{\mathfrak{m}}
\def\depth{\operatorname{\mathrm{depth}}}
\def\Ext{\operatorname{\mathrm{Ext}}}
\def\Hom{\operatorname{\mathrm{Hom}}}
\def\Max{\operatorname{\mathrm{Max}}}
\def\Tor{\operatorname{\mathrm{Tor}}}
\def\p{\operatorname{\mathfrak{p}}} 
\def\supp {\operatorname{Supp}}
\def\syz{\Omega}
\def\Spec{\operatorname{\mathrm{Spec}}}
\def \rfd {\operatorname{Rfd}} 
\def \cdim {\operatorname{\mathsf{CI-dim}}}   
\def \cifd {\operatorname{\mathsf{CI-fd}}}
\def \ucifd {\operatorname{\mathsf{CI*-id}}}
\def \ciid {\operatorname{\mathsf{CI-id}}}
\DeclareMathOperator{\CIdim}{\operatorname{\mathsf{CI-dim}}}
\DeclareMathOperator{\Hdim}{\mathsf{H-dim}}
\DeclareMathOperator{\Gdim}{\mathsf{G-dim}}
\DeclareMathOperator{\pd}{\mathsf{pd}}
\DeclareMathOperator{\cx}{\mathsf{cx}}
\DeclareMathOperator{\curv}{\mathsf{curv}}
\DeclareMathOperator{\Cdim}{\mathsf{CI-dim}}
\DeclareMathOperator{\id}{\operatorname{\mathsf{id}}}
\def \n {\mathfrak n}
\title[Homological dimensions]{Homological dimensions, the Gorenstein property, and special cases of some conjectures}       
\author[S. Dey]{Souvik Dey}
\address{Souvik Dey\\ Department of Algebra, Charles University, Faculty of Mathematics and Physics, Sokolovska
83, 186 75, Praha, Czech Republic}
\email{souvik.dey@matfyz.cuni.cz}  
\author[R. Holanda]{Rafael Holanda}
\address{Departamento de Matem\'atica, CCEN, Universidade Federal de Pernambuco, 50740-560, Recife, PE, Brazil}
\email{rf.holanda@gmail.com}  
\author[C. B. Miranda-Neto]{Cleto B. Miranda-Neto}
\address{Departamento de Matem\'atica, Universidade Federal da Para\'iba - 58051-900, Jo\~ao Pessoa, PB, Brazil}
\email{cleto@mat.ufpb.br}   
\date{\today}
\keywords{Finite homological dimension, Gorenstein ring, anticanonical module, differential module, normal module}
\subjclass[2020]{Primary 13D05, 13C10, 13H10, 13N15; Secondary 13D02, 13D07, 13C14.}
\begin{document}

\begin{abstract} Our purpose in this work is multifold. First, we provide general criteria for the finiteness of the projective and injective dimensions of a finite module $M$ over a (commutative) Noetherian ring $R$. Second, in the other direction, we investigate the impact of the finiteness of certain homological dimensions of $M$ if $R$ is local, mainly when $R$ is Cohen-Macaulay and with a partial focus on  duals. Along the way, we produce various freeness criteria for modules. Finally, we give applications, including characterizations of when $R$ is Gorenstein (and other ring-theoretic properties as well, sometimes in the prime characteristic setting), particularly by means of its anticanonical module, and in addition we address special cases of some long-standing conjectures; for instance, we confirm the {\rm 1985} conjecture of Vasconcelos on normal modules in case the module of differentials is almost Cohen-Macaulay.

\end{abstract}

\maketitle

\section{Introduction}


Part of modern homological commutative algebra is concerned with the problem of finding new characterizations, via module theory, of fundamental ring-theoretic properties of commutative rings such as the Gorenstein,  the complete intersection, and the regular properties, among others (it is worth recalling that, in an appropriate setting, such properties also feature a strong geometric counterpart). One of our goals in this paper, for a given  Noetherian local ring $R$, is to address this issue by studying several homological dimensions of a  finitely generated $R$-module $M$, namely, the injective, projective, Gorenstein, and complete intersection dimensions of $M$, and of its algebraic dual -- sometimes iterated with the canonical dual, once $R$ possesses a canonical module -- and then focus on the Gorenstein property of $R$, although we shall be able as well to provide characterizations of all the above-mentioned properties. For the case targeting Gorensteiness, if $R$ is Cohen-Macaulay with canonical module $\omega_R$ then we investigate the case  $M=\omega_R$, particularly via the $R$-dual ${\rm Hom}_R({\omega_R}, R)$, the so-called anticanonical module of $R$, which plays an important role on the properties of $R$ itself; we refer to \cite[Introduction]{anti} for a nice description of this interplay and its connections to other topics.

Furthermore, besides providing criteria for the finiteness of the injective and projective dimensions of $M$ in Section \ref{ringmaps} (for this task, we do not always assume that $R$ is local or Noetherian; the main results in this section are Theorem \ref{thm1} and Theorem \ref{thm3}), and as a fundamental step to achieve the goals described above, we address in Section \ref{dualsetc} the problem of when the finiteness of suitable homological dimensions of the $R$-module ${\rm Hom}_R(M, R)$ forces $M$ to be free, or totally reflexive, or of complete intersection dimension zero, if $R$ is a Cohen-Macaulay local Noetherian ring; one of the main results in this section is Proposition \ref{newhom} and some of its byproducts, e.g., Corollary \ref{corhom} and Proposition \ref{pdhomfinite}. Needless to say, freeness is a classical matter of interest, being in particular the protagonist of the celebrated Auslander-Reiten conjecture, to which we also contribute in one of our results (Corollary \ref{ARcontrib}). See also Corollary \ref {gathered} and Corollary \ref{p}. Criteria for the Gorensteiness of $R$ (mentioned in the previous paragraph) and more on freeness of modules are presented in Section \ref{G}, where the main results are Proposition \ref{Gor-crit-new}, Corollary \ref{ii}, Corollary \ref{charact-p} (this one treats the prime characteristic case) and Corollary \ref{nsyz} (which makes use of syzygies of strongly rigid modules).

Finally, in Section \ref{conjs}, we relate our investigation to issues involving celebrated modules such as modules of differentials and derivations as well as (co)normal modules; more precisely, we address long-standing problems such as the strong form of 
the Zariski-Lipman conjecture \cite{H} about derivation modules, Berger's conjecture \cite{Ber} on differential modules in dimension one, and Vasconcelos' conjecture \cite[p.\,373]{V} about normal modules. More precisely, in the Cohen-Macaulay case, we settle the first one when the differential module is maximal Cohen-Macaulay (Corollary \ref{first}, which in fact proves a more general statement), and the third one when either the conormal module is maximal Cohen-Macaulay (Corollary \ref{second}, where, once again, a more general result is given) or the differential module is almost Cohen-Macaulay (Corollary \ref{second-omega}). As to the second conjecture, we confirm its validity in case the canonical dual of the derivation module has finite projective dimension (Corollary \ref{ber}).

\medskip


\noindent{\it A few conventions and notations.} Throughout this note, unless explicitly stated differently, by a {\it ring} we mean a commutative Noetherian ring with non-zero identity, and by a {\it finite} module over a ring $R$ we mean a finitely generated $R$-module. We denote projective dimension and injective dimension over $R$ by $\pd_R$ and $\id_R$, respectively.

\section{Finiteness criteria for the projective and injective dimensions via ring maps}\label{ringmaps}

The purpose of this first section is to derive general criteria for the finiteness of the projective and injective dimensions of finite modules (not necessarily over local rings) via a suitable ring homomorphism. Our first result is as follows.

\begin{prop}\label{finitecriteria} Let $R\to S$ be a ring homomorphism, with $S$ not necessarily Noetherian, such that $\m S\neq S$ for each $\m \in \Max(R)$. Let $M$ be a finite $R$-module. The following assertions hold true:

\begin{enumerate}[\rm(1)]
    \item If, for each $\n \in \Max(S)$, there exists an integer $h\geq 0$ {\rm (}possibly depending on $\n${\rm )} such that $\Tor^R_h(M, S/\n)=0$, then $\pd_RM<\infty$. 

    \item Assume $S$ is module-finite over $R$. If, for each $\n \in \Max(S)$, there exists an integer $h\geq 0$ {\rm (}possibly depending on $\n${\rm )} such that $\Ext_R^h(M, S/\n)=0$, then $\pd_RM<\infty$.

    \item Suppose $\dim\,M<\infty$, and let $i>\dim\,M$ be an integer. If, for each $\n \in \Max(S)$, we have $\Ext_R^i(S/\n, M)=0$, then $\id_RM<i$ and $R$ is locally Cohen-Macaulay on $\supp_R(M)$.   
    
    \item Suppose $\dim\,R<\infty$. If, for each $\n \in \Max(S)$, we have $\Ext_R^j(S/\n, M)=0$ for all $j\gg 0$, then $\id_RM<\infty$ and $R$ is locally Cohen-Macaulay on $\supp_R(M)$.  
\end{enumerate}
\end{prop}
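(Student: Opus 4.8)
The plan is to run all four parts through one local reduction and then invoke the appropriate local homological criterion, globalizing at the end.

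\textbf{Common reduction.} First I would exploit the hypothesis $\m S\neq S$. For each $\m\in\Max(R)$ the proper ideal $\m S$ sits inside some $\n\in\Max(S)$, and then $\n\cap R=\m$ because $\m$ is maximal and $1\notin\n$; thus every maximal ideal of $R$ is the contraction of a maximal ideal of $S$. Fixing such an $\n$ over $\m$, the field $S/\n$ contains $R/\m$ and is annihilated by $\m$, so as an $R$-module $S/\n\cong\bigoplus_{I}(R/\m)$ for some index set $I$. Since $M$ is finite over the Noetherian ring $R$ it admits a resolution by finite free modules; hence $\Tor$ commutes with this direct sum, $\Ext_R(M,-)$ commutes with it, and $\Ext_R(-,M)$ converts it into a product. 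Writing $k=R/\m$ and localizing at $\m$ (everything in sight localizes, being finitely generated), the four hypotheses reduce, for every $\m\in\Max(R)$, to: in (1) $\Tor^{R_\m}_h(M_\m,k)=0$; in (2) $\Ext^h_{R_\m}(M_\m,k)=0$; in (3) $\Ext^i_{R_\m}(k,M_\m)=0$; in (4) $\Ext^j_{R_\m}(k,M_\m)=0$ for $j\gg0$. In (2), module-finiteness makes $S$ integral over $R$, so each $S/\n$ is a finite extension field of $R/\m$ and every $\n\cap R$ is automatically maximal.

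\textbf{Parts (1) and (2).} Over $(R_\m,\m,k)$ both $\Tor^{R_\m}_h(M_\m,k)$ and $\Ext^h_{R_\m}(M_\m,k)$ have $k$-dimension equal to the $h$-th Betti number $\beta_h$ of $M_\m$; in a minimal free resolution $\beta_h=0$ forces the $h$-th free module to vanish, hence $\pd_{R_\m}M_\m<h<\infty$. So $M$ has finite projective dimension at every maximal ideal, and, localizing a resolution, at every prime. Because the integer $h$ depends on $\m$, I would obtain a uniform global bound via topology: the sets $U_n=\{\mathfrak p\in\Spec R:\pd_{R_{\mathfrak p}}M_{\mathfrak p}\le n\}$ are the (open) free loci of the finitely presented syzygies $\syz^{n}M$, they increase with $n$, and they cover $\Spec R$ since $\pd$ is finite at every prime. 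By quasi-compactness of $\Spec R$ one gets $\Spec R=U_N$ for some $N$, so $\syz^{N}M$ is locally free, hence projective, and $\pd_RM\le N<\infty$.

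\textbf{Parts (3) and (4).} Localizing the reduced hypotheses, injective dimension is read off the residue field. For (4), eventual vanishing of $\Ext^j_{R_\m}(k,M_\m)$ is exactly $\id_{R_\m}M_\m<\infty$; Bass's theorem then yields $\id_{R_\m}M_\m=\depth R_\m\le\dim R$, a bound independent of $\m$, so $\id_RM=\sup_{\m}\id_{R_\m}M_\m\le\dim R<\infty$. For (3), set $\mu^i:=\dim_k\Ext^i_{R_\m}(k,M_\m)$ and note $i>\dim M\ge\dim M_\m\ge\depth_{R_\m}M_\m$ with $M_\m\neq0$ whenever $\m\in\supp_R(M)$; the rigidity of Bass numbers (Roberts) forces $\id_{R_\m}M_\m<\infty$, whence $R_\m$ is Cohen--Macaulay by the (now proven) Bass conjecture, and over a Cohen--Macaulay ring the nonzero Bass numbers of $M_\m$ fill the interval $[\depth_{R_\m}M_\m,\id_{R_\m}M_\m]$ with no gaps (local duality turns them into Betti numbers of the dual). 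Since $\mu^i=0$ with $i\ge\depth_{R_\m}M_\m$, necessarily $i>\id_{R_\m}M_\m$, i.e. $\id_{R_\m}M_\m\le i-1$; this bound is uniform in $\m$, so $\id_RM\le i-1<i$. In both parts, applying the Bass conjecture at each $\m\in\supp_R(M)$ gives $R_\m$ Cohen--Macaulay, and localizations of Cohen--Macaulay local rings are again Cohen--Macaulay, so $R$ is locally Cohen--Macaulay on $\supp_R(M)$.

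\textbf{Main obstacle.} The genuinely hard step is the local single-degree criterion used in (3): that one vanishing $\Ext^i_{R_\m}(k,M_\m)=0$ in a degree exceeding $\dim M_\m$ already forces finite injective dimension. Unlike Betti numbers, Bass numbers at $\m$ may a priori have gaps, so this cannot be made elementary; it rests on the rigidity of Bass numbers and, for the Cohen--Macaulay conclusion and the sharp bound $\le i-1$, on the resolved Bass conjecture together with local duality. By contrast, the projective-dimension parts are softer: their only delicate point is promoting pointwise-finite projective dimension, with $\m$-dependent bounds, to a single global bound, which the quasi-compactness argument settles.
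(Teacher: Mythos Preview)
Your proof is correct. The common local reduction and the local criteria you invoke match the paper's; the main difference is how you globalize in parts (1) and (2). You use the quasi-compactness of $\Spec R$ together with the openness of the free loci of the successive syzygies $\syz^nM$, whereas the paper invokes the large restricted flat dimension $\rfd_RM$ (always finite for finite $M$ over a Noetherian ring), which via the local Auslander--Buchsbaum formula gives the uniform bound $\pd_{R_\m}M_\m=\depth R_\m-\depth_{R_\m}M_\m\le\rfd_RM$ in one stroke. Your route is more elementary and self-contained; the paper's hands you an explicit numerical bound. For part (3), your detour through the Bass conjecture and then ``no gaps via local duality'' is unnecessary and a bit shaky as stated (a Cohen--Macaulay local ring need not possess a canonical module, so local duality is not immediately available; one would have to pass to the completion first): Roberts' theorem already yields the sharp conclusion $\id_{R_\m}M_\m<i$, not merely $<\infty$, from the single vanishing $\mu^i=0$ in a degree beyond depth, and the paper cites it for precisely that. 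Part (4) is argued the same way in both.
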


\begin{proof} (1) For a given $\m \in \Max(R)$, we have $\m S\neq S$ by hypothesis and so there exists $\n \in \Max(S)$ such that $\m S\subseteq \n$. Also, by assumption, there is an integer $h\geq 0$ such that $\Tor_h^R(M, S/\n)=0$. As $S/\n$ is an $R/\m$-vector space (possibly of infinite dimension), we get that $S/\n$ is a direct sum of (possibly infinitely many) copies of $R/\m$, and hence  $\Tor^R_h(M, R/\m)=0$ by \cite[Proposition 7.6]{rot}. So, localizing at each $\m \in \Max(R)$, we obtain $\pd_{R_{\m}}M_{\m}<\infty$. 
Now it is convenient to consider the so-called large restricted flat dimension $\rfd_RM$ of $M$ over $R$, which can be expressed as
$$\rfd_RM=\sup\{\depth R_\mathfrak{p}-\depth_{R_\mathfrak{p}}M_\mathfrak{p} \, \mid \, \mathfrak{p}\in\Spec (R)\};$$ we refer to \cite[Notes 1.6 and (1.0.1)]{ail}. It follows that, for each $\m \in \Max(R)$,  $$\pd_{R_{\m}}M_{\m}=\depth\,R_{\m}-\depth_{R_{\m}}M_{\m}\leq \rfd_RM <\infty,$$ where the finiteness of  $\rfd_RM$ is guaranteed by \cite[Theorem 1.1]{ail}. Hence, using  \cite[Proposition 8.52]{rot}, we conclude $\pd_RM=\sup\{\pd_{R_{\m}}M_{\m} \mid \m \in \Max (R) \}\leq \rfd_RM<\infty$.

\smallskip

(2) Fix $\m \in \Max(R)$. By hypothesis, $\m S\neq S$, hence there exists $\n \in \Max(S)$ such that $\m S\subseteq \n$, and in addition there exists $h\geq 0$ such that $\Ext^h_R(M, S/\n)=0$. As $S$ is module-finite over $R$, we get that $S/\n$ is a direct sum of finitely many copies of $R/\m$, which yields $\Ext_R^h(M,R/\m)=0$ by \cite[Proposition 7.22]{rot}. The rest of the proof is similar to the one given in (1).

\smallskip

(3) By \cite[Proposition 7.21]{rot}, we have $\Ext^i_R(R/\mathfrak{m},M)=0$ for each $\mathfrak{m}\in\Max(R)$. Since $i>\dim\,M\geq \depth_{R_{\m}}M_{\m}$ for each $\m \in \Max(R)\cap \supp_R(M)$, we can localize the above vanishing condition and use \cite[II. Theorem 2]{rob} in order to get $\id_{R_{\m}}M_{\m}<i$. Therefore, for every $R$-module $L$, we have $$\Ext^{\geq i}_R(L, M)_{\m}\cong \Ext^{\geq i}_{R_{\m}}(L_{\m}, M_{\m})=0 \quad \mbox{for\, every} \quad \m \in \Max(R).$$ Thus, $\Ext_R^{\geq i}(L, M)=0$ for every $R$-module $L$, so that $\id_RM<\infty$ and, more precisely, $\id_RM<i$ by \cite[Proposition 3.1.10]{BH93}. Finally, if $\p \in \supp_R(M)$, then the nonzero $R_{\p}$-module $M_{\p}$ has finite injective dimension and hence $R_{\p}$ must be Cohen--Macaulay by the well-known Bass' Theorem.  

\smallskip

(4) As in (3), we have $\Ext^{\gg0}_R(R/\mathfrak{m}, M)=0$ for each $\mathfrak{m}\in\Max(R)$. Localizing this vanishing condition and using \cite[Proposition 3.1.14]{BH93}, we get $\id_{R_{\m}}M_{\m}<\infty$. Thus, $\id_{R_{\m}}M_{\m}= \depth R_{\m}\leq \dim R$ for each $\m \in \Max(R)$, where the equality follows by \cite[Theorem 3.1.17]{BH93}. Hence, for every $R$-module $L$, we have $\Ext^{>\dim R}_R(L, M)_{\m}\cong \Ext^{>\dim R}_{R_{\m}}(L_{\m}, M_{\m})=0$ for every $\m \in \Max(R)$. Thus, $$\Ext_R^{>\dim R}(L, M)=0 \quad \mbox{for\, every\, {\it R}-module {\it L}},$$ and so \cite[Proposition 3.1.10]{BH93} yields $\id_RM\leq {\dim R}<\infty$. The last assertion follows from (3).
\end{proof}

Before we proceed with further preparation for a theorem on finiteness of projective dimension, we record the following byproduct which gives a criterion for an interesting new global bound on injective dimension (in the local case, the bound is attained).

\begin{cor} Let $R\to S$ be a ring homomorphism, with $S$ not necessarily Noetherian, such that $\m S\neq S$ for every $\m \in \Max(R)$. Let $M$ be a finite $R$-module with $\dim\,M<\infty$. 
If, for each $\n \in \Max(S)$, we have $\Ext_R^{{\rm dim}\,M +1}(S/\n, M)=0$, then $$\id_RM \leq {\rm dim}\,M$$ and $R$ is locally Cohen-Macaulay on $\supp_R(M)$. In particular, if $R$ is local then  ${\rm dim}\,M = \id_RM = {\rm depth}\,R$.
    
\end{cor}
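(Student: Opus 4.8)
The plan is to read off both conclusions directly from Proposition \ref{finitecriteria}(3), specialized to the value $i = \dim M + 1$, and then to upgrade the resulting bound to the numerical equalities in the local case by invoking Bass's theorem. Since the statement is essentially a specialization of part (3), the argument should be short, and the only real content lies in combining the injective-dimension bound with two standard facts.

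First I would set $i = \dim M + 1$. Because $\dim M < \infty$, this is a genuine integer with $i > \dim M$, and the hypothesis $\Ext_R^{\dim M + 1}(S/\n, M) = 0$ for all $\n \in \Max(S)$ is precisely the vanishing demanded by Proposition \ref{finitecriteria}(3). Applying that proposition yields simultaneously $\id_R M < i = \dim M + 1$, that is $\id_R M \leq \dim M$, and the assertion that $R$ is locally Cohen--Macaulay on $\supp_R(M)$. This settles the first two claims with no additional work.

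For the final clause, I would assume $R$ is local with maximal ideal $\m$ and (to make the equalities meaningful) that $M \neq 0$. Then $\m \in \supp_R(M)$, so the local Cohen--Macaulayness just obtained forces $R = R_\m$ itself to be Cohen--Macaulay, whence $\depth R = \dim R$. Next, the bound $\id_R M \leq \dim M < \infty$ shows $M$ has finite injective dimension, so Bass's theorem \cite[Theorem 3.1.17]{BH93} (already used in the proof of part (4)) gives $\id_R M = \depth R$. Combining this with the elementary inequality $\dim M \leq \dim R$ and the bound from the first part, I would assemble the chain
$$\dim R = \depth R = \id_R M \leq \dim M \leq \dim R,$$
which forces all terms to agree and delivers $\dim M = \id_R M = \depth R$.

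I do not anticipate a genuine obstacle, as everything reduces to Proposition \ref{finitecriteria}(3) plus two classical inputs. The only points requiring a little care are the degenerate case $M = 0$, which must be excluded for the concluding equalities to make sense, and the verification that $\m \in \supp_R(M)$, so that one may deduce Cohen--Macaulayness of $R$ itself rather than merely of a proper localization.
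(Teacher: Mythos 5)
Your proposal is correct and follows essentially the same route as the paper: both apply Proposition~\ref{finitecriteria}(3) with $i=\dim M+1$ and then invoke \cite[Theorem 3.1.17]{BH93} for the local case. The only (immaterial) difference is in assembling the final equalities: you route through Cohen--Macaulayness of $R$ and the inequality $\dim M\leq\dim R$, whereas the paper uses the inequality $\dim M\leq\id_R M$ already contained in Bass's theorem; your explicit attention to the degenerate case $M=0$ is a fair point that the paper leaves implicit.
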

\begin{proof} Apply Proposition \ref{finitecriteria}(3) with $i={\rm dim}\,M +1$. If $R$ is local then ${\rm dim}\,M \leq  \id_RM = {\rm depth}\,R$ (see \cite[Theorem 3.1.17]{BH93}), and hence the assertion follows.
\end{proof}

Below we furnish simple instances of a ring homomorphism $R\to S$ satisfying $\mathfrak{m}S\neq S$ for every $\mathfrak{m}\in\Max(R)$.

\begin{eg}\label{instances}

\begin{enumerate}[\rm(1)] 
\item Let $R\to S$ be a finite morphism of rings and assume $(R,\m)$ is local. Then, $\m S\neq S$ is automatic by Nakayama's lemma.

\item Let $R\to S$ be a faithfully flat map. Then, $S/\m S \cong S\otimes_R R/\m \neq 0$ implies $\m S\neq S$ for all $\m \in \Max(R)$.  

\item Let $R\subseteq S$ be an integral extension. Then, each $\m \in \Max(R)$ is the contraction of a maximal ideal of $S$, hence $\m S\neq S$.  

\item Assume $R\subseteq S$ is a ring extension such that $R$ is a retraction of a given ring $S$, i.e., there is an $R$-linear map $S\to R$ whose restriction to $R$ is the identity map (note $R$ is also Noetherian; see, e.g., \cite[Exercise 5.27]{lw}). Then, $\m S\neq S$ for all $\m \in \Max(R)$.

\end{enumerate}
\end{eg}   





    

The next lemma is an elementary fact that will be useful in the sequel.

\begin{lem}\label{11} {\it Let $R$ be a ring. Let $\{T_n\}$ be a covariant homological delta-functor, $\{F^n\}$ a covariant cohomological delta-functor, and $\{G^n\}$ a contravariant cohomological delta-functor, all from the category of $R$-modules to itself. Let $s,h\geq 0$ be integers, and $N,X_0,\ldots,X_h$ be $R$-modules fitting into an exact sequence $$0\to X_h\to\cdots\to X_0\to N\to 0.$$ The following assertions hold true: 

\begin{enumerate}[\rm(1)]
    \item If $T_{s+h-i+1\leq j \leq s+h+1}(X_i)=0$ for all $i=0,\ldots,h$, then $T_{s+h+1}(N)=0$. 

    \item If $F^{s+1\leq j \leq s+i+1}(X_i)=0$ for all $i=0,\ldots,h$, then $F^{s+1}(N)=0$.  

    \item If $G^{s+h-i+1\leq j \leq s+h+1}(X_i)=0$ for all $i=0,\ldots,h$, then $G^{s+h+1}(N)=0$.
\end{enumerate}}
\end{lem}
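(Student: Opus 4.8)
The plan is to collapse the given long exact sequence into a string of short exact sequences and feed each of these into the long exact sequence attached to the relevant $\delta$-functor, keeping careful track of variance. Put $X_{-1}:=N$, and for $0\le i\le h-1$ set $K_i=\ker(X_i\to X_{i-1})=\operatorname{im}(X_{i+1}\to X_i)$, with $K_{-1}=N$; exactness of the given sequence yields short exact sequences
\[
0\to K_i\to X_i\to K_{i-1}\to 0\qquad(0\le i\le h-1),
\]
together with the identification $K_{h-1}\cong X_h$ coming from injectivity of the leftmost map $X_h\to X_{h-1}$. The idea is that each claim propagates the hypothesized vanishing of the functor on the $X_i$ through these short exact sequences, reducing the vanishing on $N=K_{-1}$ to the vanishing on $X_h=K_{h-1}$.

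For part (1), the homological long exact sequence of $\{T_n\}$ applied to the $i$-th short exact sequence has connecting map lowering degree, so the four-term piece $T_n(X_i)\to T_n(K_{i-1})\xrightarrow{\partial}T_{n-1}(K_i)\to T_{n-1}(X_i)$ shows that $T_n(X_i)=0$ forces $T_n(K_{i-1})\hookrightarrow T_{n-1}(K_i)$. Choosing $n=s+h+1-i$ at the $i$-th step (which uses only the vanishing of $T_{s+h+1-i}(X_i)$, i.e.\ the bottom of the prescribed interval), I would chain these monomorphisms into
\[
T_{s+h+1}(N)\hookrightarrow T_{s+h}(K_0)\hookrightarrow\cdots\hookrightarrow T_{s+1}(K_{h-1})=T_{s+1}(X_h)=0,
\]
giving $T_{s+h+1}(N)=0$. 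Part (3) is formally dual: since $\{G^n\}$ is contravariant, the long exact sequence reads $\cdots\to G^{n-1}(K_i)\xrightarrow{\delta}G^n(K_{i-1})\to G^n(X_i)\to\cdots$, so $G^n(X_i)=0$ now produces an epimorphism $G^{n-1}(K_i)\twoheadrightarrow G^n(K_{i-1})$. With the same choice $n=s+h+1-i$ (the interval in (3) coincides with that in (1)) I obtain the chain of surjections
\[
0=G^{s+1}(X_h)=G^{s+1}(K_{h-1})\twoheadrightarrow\cdots\twoheadrightarrow G^{s+h}(K_0)\twoheadrightarrow G^{s+h+1}(N),
\]
whence $G^{s+h+1}(N)=0$.

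Part (2) uses the same decomposition, but now $\{F^n\}$ is covariant cohomological, so its connecting map raises degree and the relevant segment is $F^n(K_i)\to F^n(X_i)\to F^n(K_{i-1})\xrightarrow{\delta}F^{n+1}(K_i)$; thus $F^n(X_i)=0$ yields $F^n(K_{i-1})\hookrightarrow F^{n+1}(K_i)$. Here I would take $n=s+1+i$ at the $i$-th step (this time the \emph{top} of the interval, $s+i+1$, is what is consumed), producing
\[
F^{s+1}(N)\hookrightarrow F^{s+2}(K_0)\hookrightarrow\cdots\hookrightarrow F^{s+h+1}(K_{h-1})=F^{s+h+1}(X_h)=0,
\]
so that $F^{s+1}(N)=0$.

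The only place where genuine care is needed is the index bookkeeping: at each stage one must verify that the single degree being killed ($s+h+1-i$ in (1) and (3), but $s+1+i$ in (2)) lies in the interval the hypothesis prescribes for $X_i$, and that the chain terminates precisely at $K_{h-1}=X_h$. Equivalently, the whole argument can be run as an induction on $h$: peel off $0\to K_0\to X_0\to N\to 0$, apply the inductive hypothesis to $0\to X_h\to\cdots\to X_1\to K_0\to 0$ (keeping the parameter $s$ in (1) and (3), but replacing $s$ by $s+1$ in (2)), and finish with the three-term segment of the appropriate long exact sequence; the base case $h=0$ is just the isomorphism $N\cong X_0$.
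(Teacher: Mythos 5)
Your proof is correct, and it is built on the same skeleton as the paper's: both split the given resolution into the short exact sequences $0\to K_i\to X_i\to K_{i-1}\to 0$ (your $K_{i-1}$ is the paper's $N_i$, with $K_{h-1}\cong X_h$ and $K_{-1}=N$) and run the long exact sequence of the delta-functor over them. The difference is in how the vanishing is transported. The paper propagates whole \emph{windows} of vanishing toward $N$: in (1), for instance, it deduces $T_j(N_{h-1})=0$ for $s+2\le j\le s+h+1$, then $T_j(N_{h-2})=0$ for $s+3\le j\le s+h+1$, and so on, thereby consuming the full intervals hypothesized for each $X_i$. You instead chain single monomorphisms (epimorphisms in (3)), which consumes exactly one degree per module: the bottom endpoint $s+h+1-i$ of the interval in (1) and (3), and the top endpoint $s+i+1$ in (2). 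Your index bookkeeping checks out at every step (including the $h=0$ base case and the final landing on $X_h$), so your argument in fact establishes a slightly sharper statement than the lemma as written: only the extreme degree of each prescribed interval needs to vanish. What the paper's window-propagation buys is the intermediate vanishing ranges on the modules $N_i$ along the way, but for the stated conclusion your leaner hypotheses suffice, and either route yields the lemma as it is used later in the paper.
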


\begin{proof} The case where $h=0$ is clear, so we assume $h\geq 1$. We break apart the given exact sequence into short exact sequences $$0\to N_i\to X_{i-1}\to N_{i-1}\to 0,$$ for $1\leq i \leq h$ and $R$-modules $N_i$, with $N_h=X_h$ and $N_0=N$. 

(1) Applying $\{T_n\}$ to the short exact sequence above for $i=h$, we obtain $T_{s+2\leq j \leq s+h+1}(N_{h-1})=0$. If $h=1$, we are done, otherwise applying $\{T_n\}$ to the short exact sequence for $i=h-1$, we get $T_{s+3\leq j \leq s+h+1}(N_{h-2})=0$. Continuing similarly, we have $T_{s+h+1}(N_0)=0$, which completes the proof since $N_0=N$. 

\smallskip

(2) Applying $\{F^n\}$ to the short exact sequence above for $i=h$, we get $F^{s+1\leq j \leq s+h}(N_{h-1})=0$. If $h=1$, we are done, otherwise applying $\{F^n\}$ to the short exact sequence for $i=h-1$, we obtain $F^{s+1\leq j \leq s+h-1}(N_{h-2})=0$. Continuing this way, we finally get $F^{s+1}(N_0)=0$, which completes the proof since $N_0=N$. 

\smallskip

(3) The proof is similar to that of (1). 
\end{proof}

Following standard terminology, we say that a module over a ring $S$ is (maximal) Cohen--Macaulay if it is locally (maximal) Cohen--Macaulay everywhere on $\Spec(R)$.

\begin{thm}\label{thm1} Let $R\to S$ be a ring map such that $\m S\neq S$ for each $\m \in \Max(R)$. Let $M$ be a finite $R$-module. Assume that $S$ is Cohen--Macaulay and that any one of the following conditions holds true:

\begin{enumerate}[\rm(1)]
    \item $\Tor^R_{\gg 0}(M,X)=0$ for every maximal Cohen--Macaulay $S$-module $X$ which is locally free on ${\rm Spec}(S)\setminus {\rm Max}(S)$.
    

    \item There exist {\rm (}uniform{\rm )} integers $s,h\geq 0$ such that $\Tor^R_{s+1\leq j \leq s+h+1}(M,X)=0$ for every maximal Cohen--Macaulay $S$-module $X$ which is locally free on ${\rm Spec}(S)\setminus {\rm Max}(S)$.

\item  $S$ is module-finite over $R$ and, in addition, $\Ext_R^{\gg 0}(M,X)=0$ for every maximal Cohen--Macaulay $S$-module $X$ which is locally free on ${\rm Spec}(S)\setminus {\rm Max}(S)$.

      \item $S$ is module-finite over $R$ and, in addition, there exist {\rm (}uniform{\rm )} integers $s,h\geq0$ such that $\Ext_R^{s+1\leq j\leq s+h+1}(M,X)=0$  for every maximal Cohen--Macaulay $S$-module $X$ which is locally free on ${\rm Spec}(S)\setminus {\rm Max}(S)$. 
\end{enumerate}
    Then, $\pd_RM<\infty$. 
\end{thm}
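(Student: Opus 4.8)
The plan is to reduce each of the four cases to Proposition \ref{finitecriteria}: in cases (1) and (2) I will produce, for every $\n \in \Max(S)$, a single homological degree in which $\Tor^R(M, S/\n)$ vanishes and then invoke Proposition \ref{finitecriteria}(1); in cases (3) and (4) I will likewise produce a single degree in which $\Ext_R(M, S/\n)$ vanishes and invoke Proposition \ref{finitecriteria}(2), whose applicability is exactly what forces the module-finiteness hypothesis there. The whole argument rests on exhibiting two families of test modules to which the hypotheses apply: the ring $S$ itself, and the high $S$-syzygies $\Omega^t_S(S/\n)$ of the residue fields at the maximal ideals of $S$.

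First I would record the two structural facts. Since $S$ is free over itself it is a maximal Cohen--Macaulay $S$-module (this is where $S$ being Cohen--Macaulay enters) and is locally free everywhere, so it is a legitimate test module; applying each hypothesis to $X=S$ then controls the terms $\Tor^R_\bullet(M,S)$, resp. $\Ext_R^\bullet(M,S)$. Next, fix $\n \in \Max(S)$ and a free $S$-resolution $\cdots \to F_1 \to F_0 \to S/\n \to 0$. Because $S/\n$ is supported only at $\n$, localizing at any prime $\p \neq \n$ turns the resolution into a split exact complex of free modules, so every syzygy $\Omega^t_S(S/\n)$ is locally free on $\Spec(S)\setminus\{\n\}$, hence in particular on $\Spec(S)\setminus\Max(S)$; at such $\p$ it is therefore also maximal Cohen--Macaulay. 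At $\n$ itself, the depth lemma over the Cohen--Macaulay ring $S$ gives $\depth_{S_\n}(\Omega^t_S(S/\n))_\n \geq \min(t,\dim S_\n)$, so $\Omega^t_S(S/\n)$ is maximal Cohen--Macaulay everywhere as soon as $t \geq \dim S_\n$. Thus for $t$ large these syzygies are valid test modules.

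With these in hand, cases (1) and (3) follow by dimension shifting: the hypothesis applied to $X=S$ and to $X=\Omega^t_S(S/\n)$ (for one fixed $t \geq \dim S_\n$) gives $\Tor^R_{\gg 0}(M,S)=0$ and $\Tor^R_{\gg 0}(M,\Omega^t_S(S/\n))=0$ (resp. the $\Ext$ versions); feeding these into the long exact sequences attached to $0 \to \Omega^{i}_S(S/\n) \to F_{i-1} \to \Omega^{i-1}_S(S/\n) \to 0$, the free-module contributions vanish in high degrees by the $X=S$ input, and one peels off the syzygies to obtain $\Tor^R_{\gg 0}(M,S/\n)=0$ (resp. $\Ext_R^{\gg 0}(M,S/\n)=0$). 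A single such vanishing degree then feeds Proposition \ref{finitecriteria}(1) (resp. (2)). For the uniform-window cases (2) and (4) I would instead apply Lemma \ref{11} to the truncated resolution $0 \to \Omega^h_S(S/\n) \to F_{h-1} \to \cdots \to F_0 \to S/\n \to 0$, taking the delta-functor to be $T_n=\Tor^R_n(M,-)$ and using part (1) of the lemma for case (2), resp. $F^n=\Ext^n_R(M,-)$ and part (2) of the lemma for case (4). The index windows demanded by Lemma \ref{11} match the hypothesis window $[s+1,s+h+1]$ exactly: the top term $\Omega^h_S(S/\n)$ uses the full window (supplied by the hypothesis, since it is a test module), while each free term $F_i$ uses a sub-window (supplied by the $X=S$ input). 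The lemma then yields $\Tor^R_{s+h+1}(M,S/\n)=0$, resp. $\Ext_R^{s+1}(M,S/\n)=0$, and Proposition \ref{finitecriteria} finishes the proof.

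The main obstacle is the maximal Cohen--Macaulayness of the truncating syzygy: the hypotheses only speak about maximal Cohen--Macaulay test modules, whereas $\Omega^t_S(S/\n)$ attains full depth only for $t \geq \dim S_\n$. In the ``$\gg 0$'' cases (1) and (3) this is harmless, since one may take $t$ as large as needed. In the uniform-window cases (2) and (4) it means the window must be wide enough to reach a maximal Cohen--Macaulay syzygy, i.e. one needs $h \geq \dim S_\n$ for each $\n$; this is precisely where the Cohen--Macaulay hypothesis on $S$ is essential, both in guaranteeing that high syzygies stabilize to maximal Cohen--Macaulay modules and in making $S$ itself a test module. The remaining care is bookkeeping: one must verify that the free-module $\Tor$/$\Ext$ contributions really do vanish throughout the ranges dictated by Lemma \ref{11}, which is exactly what the $X=S$ instance of each hypothesis provides.
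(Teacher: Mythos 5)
Your route is the paper's own: reduce to Proposition \ref{finitecriteria}(1) (resp.\ (2)) by exhibiting, for each $\n\in\Max(S)$, a single degree in which $\Tor^R(M,S/\n)$ (resp.\ $\Ext_R(M,S/\n)$) vanishes, and extract that degree from the $S$-free resolution of $S/\n$ truncated at a maximal Cohen--Macaulay syzygy, all of whose terms are admissible test modules; d\'ecalage handles (1) and (3), and Lemma \ref{11} handles (2) and (4). Your treatment of cases (1) and (3) is correct and coincides with the paper's.

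In cases (2) and (4), however, there is a genuine gap, and your final paragraph misdiagnoses it. You truncate at homological index $h$, so your argument requires $\Omega^h_S(S/\n)$ to be maximal Cohen--Macaulay, i.e.\ $h\ge\dim S_\n$. But $h$ is simply whatever integer the hypothesis provides; nothing in the statement relates it to $\dim S$. Your assertion that ``this is precisely where the Cohen--Macaulay hypothesis on $S$ is essential'' is a non sequitur: Cohen--Macaulayness of $S$ ensures that sufficiently high syzygies are maximal Cohen--Macaulay, but it cannot force the prescribed window width $h+1$ to exceed $\dim S_\n$; if $h<\dim S_\n$ (say $h=0$ and $\dim S\ge 1$), your argument yields nothing. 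In fairness, the paper's proof carries the same hidden constraint in mirror image: it truncates at $t=\dim S_\n$, so that every term is a test module, but its appeal to Lemma \ref{11} then requires of the last term a vanishing window of width $t+1$, which the hypothesis (of width $h+1$) can furnish only when $t\le h$. So you have reproduced the paper's argument together with its unresolved case $h<\dim S_\n$ --- note that in the companion result, Theorem \ref{thm3}(2), the authors explicitly impose $h=\dim S$ --- but a complete proof must either assume $h\ge\dim S$ or give a genuinely different argument when the window is narrower than the dimension; your claim that the Cohen--Macaulay hypothesis disposes of that case is incorrect.
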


\begin{proof}  For items (1) and (2) (resp. item (3) and (4)), it suffices to prove that for each $\mathfrak{n}\in\Max(S)$ there exists an integer $h'\geq0$ such that $$\Tor^R_{h'}(M, S/\mathfrak{n})=0$$ (resp. $\Ext^{h'}_R(M, S/\mathfrak{n})=0$), by Proposition \ref{finitecriteria}(1) (resp. Proposition \ref{finitecriteria}(2)). So, let $\mathfrak{n}\in\Max(S)$ and set $t=\depth S_\mathfrak{n}(=\dim S_\mathfrak{n})$. If $Z$ stands for the $t$-th syzygy $S$-module of $S/\mathfrak{n}$, then by \cite[Proposition 8.5]{rot} we obtain that $Z_\mathfrak{q}$ is a free $S_\mathfrak{q}$-module for each $\mathfrak{q}\in \Spec (S)\setminus \{\mathfrak{n}\}$, while for $\mathfrak{q}=\mathfrak{n}$ the  $S_\mathfrak{n}$-module $Z_\mathfrak{n}$ is maximal Cohen--Macaulay by \cite[Exercise 1.3.7]{BH93}. This proves that $Z$ is  maximal Cohen--Macaulay and also locally free on ${\rm Spec}(S)\setminus {\rm Max}(S)$. Note that $S$ obviously satisfies this property as well. In other words, there exists an exact sequence
\begin{equation}\label{res-by-MCM} 0\to X_t\to X_{t-1}\to\cdots\to X_0\to S/\mathfrak{n}\to0 \end{equation}
where each $X_i$ is a maximal Cohen--Macaulay $S$-module that is locally free on ${\rm Spec}(S)\setminus {\rm Max}(S)$. Therefore, the existence of the desired integer $h'$ follows by {\it d\'ecalage} for items (1) and (3), and by applying Lemma \ref{11} with $T_*(-):=\Tor^R_*(M,-)$ for item (2) and $F^*(-):=\Ext_R^*(M,-)$ for item (4).
\end{proof}






\begin{rem}\label{regularhyp}
In Theorem \ref{thm1}, if we suppose  $S$ is regular, we may clearly replace all the modules $X$ with $S$ itself. Then, in this case, we note the following:

\begin{enumerate}[\rm(i)]
    \item In item (1), $S$ is a test $R$-module in the sense of \cite{CDT}. In particular, item (1) was proved in \cite[Proposition 2.4]{CDT} if $R\to S$ is a finite local ring map.

\item In item (3), if $R$ is a Cohen-Macaulay local ring with a canonical module $\omega_R$ and $S$ is maximal Cohen-Macaulay over $R$, then $\Hom_R(S,\omega_R)$ is a test $R$-module by \cite[Proposition 3.6]{CDT}.

\end{enumerate}
\end{rem}

\begin{thm}\label{thm3} Let $R\to S$ be a ring map such that $\m S\neq S$ for each $\m \in \Max(R)$. Let $M$ be a finite $R$-module. Assume that $S$ is Cohen--Macaulay and that any one of the following conditions holds true:

\begin{enumerate}[\rm(1)]
    \item $\dim R< \infty$ and $\Ext_R^{\gg 0}(X,M)=0$ for every maximal Cohen--Macaulay $S$-module $X$ which is locally free on ${\rm Spec}(S)\setminus {\rm Max}(S)$.

    \item $\dim S=h< \infty$, $\dim M< \infty$, and there exists a {\rm (}uniform{\rm )} integer $s\geq 0$ with $s+h+1>\dim M$ such that $\Ext_R^{s+1\leq j \leq s+h+1}(X,M)=0$ for every maximal Cohen--Macaulay $S$-module $X$ which is locally free on ${\rm Spec}(S)\setminus {\rm Max}(S)$.
\end{enumerate}
    Then, $\id_RM<\infty$ and $R$ is locally Cohen--Macaulay on $\supp_R(M)$.   
\end{thm}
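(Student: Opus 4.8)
The plan is to run the argument of Theorem \ref{thm1} in its injective-dimension incarnation, feeding the output into parts (4) and (3) of Proposition \ref{finitecriteria} for cases (1) and (2) respectively. The common first step is to reproduce the construction from the proof of Theorem \ref{thm1}: for each $\n\in\Max(S)$ put $t=\depth S_\n=\dim S_\n$, which is finite because $S_\n$ is a Noetherian local ring and equals the dimension because $S$ is Cohen--Macaulay. The $t$-th syzygy of $S/\n$ over $S$ is then maximal Cohen--Macaulay and locally free on $\Spec(S)\setminus\Max(S)$, so we obtain an exact sequence
$$0\to X_t\to\cdots\to X_0\to S/\n\to 0$$
in which every $X_i$ is a maximal Cohen--Macaulay $S$-module that is locally free on $\Spec(S)\setminus\Max(S)$. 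In both cases it therefore suffices to convert the hypothesis on $\Ext_R^*(X_i,M)$ into a vanishing statement for $\Ext_R^*(S/\n,M)$.

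For case (1), I would note that the hypothesis gives $\Ext_R^j(X_i,M)=0$ for all $j\gg0$ and each of the finitely many indices $i=0,\ldots,t$; hence there is a single integer $N$ with $\Ext_R^j(X_i,M)=0$ for all $j\geq N$ and all $i$. Splitting the displayed resolution into short exact sequences and walking the associated long exact sequences of $\Ext_R^*(-,M)$ (\emph{d\'ecalage}) shifts this vanishing up the resolution, yielding $\Ext_R^j(S/\n,M)=0$ for $j\geq N+t$, in particular for $j\gg0$. Since this holds for every $\n\in\Max(S)$ and $\dim R<\infty$, Proposition \ref{finitecriteria}(4) delivers $\id_RM<\infty$ together with the local Cohen--Macaulayness of $R$ on $\supp_R(M)$.

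For case (2), the bound $t=\dim S_\n\leq h=\dim S$ is what lets the uniform hypothesis do its work. I would apply Lemma \ref{11}(3) to the contravariant cohomological $\delta$-functor $G^*(-):=\Ext_R^*(-,M)$ and the above resolution of length $t$, but with the shift parameter taken to be $s':=s+h-t\geq0$. Then the range $[\,s'+t-i+1,\,s'+t+1\,]=[\,s+h-i+1,\,s+h+1\,]$ demanded by the lemma for each $i=0,\ldots,t$ sits inside $[\,s+1,\,s+h+1\,]$, precisely the interval on which the hypothesis guarantees vanishing of $\Ext_R^*(X_i,M)$. The lemma then yields $\Ext_R^{s+h+1}(S/\n,M)=0$ for every $\n\in\Max(S)$, and since $s+h+1>\dim M$, Proposition \ref{finitecriteria}(3) (with $i=s+h+1$) gives $\id_RM<s+h+1$ and the asserted local Cohen--Macaulayness.

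The step I expect to require the most care is the index bookkeeping in case (2): the resolution length $t$ depends on $\n$, whereas the hypothesis is phrased with the uniform constant $h=\dim S$, so the whole argument hinges on choosing the shift $s'=s+h-t$ and checking both $s'\geq0$ (using $t\leq h$) and the containment of cohomological ranges (using $i\leq t\leq h$). By comparison, case (1) is routine once the uniform bound $N$ across the finitely many $X_i$ is extracted before performing the d\'ecalage.
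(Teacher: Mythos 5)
Your proof is correct and follows exactly the paper's own route: the same syzygy resolution of $S/\n$ by maximal Cohen--Macaulay modules locally free off $\Max(S)$, d\'ecalage plus Proposition \ref{finitecriteria}(4) for case (1), and Lemma \ref{11}(3) with $G^*(-)=\Ext_R^*(-,M)$ feeding into Proposition \ref{finitecriteria}(3) for case (2). The only difference is that you make explicit the shift $s'=s+h-t$ and the range containment that the paper leaves implicit, which is a welcome clarification rather than a deviation.
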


\begin{proof}
As in the proof of Theorem \ref{thm1}, given $\mathfrak{n}\in\Max(S)$ there exists an exact sequence
(\ref{res-by-MCM}) with $t=\depth\,S_\mathfrak{n}=\dim\,S_\mathfrak{n}$ $(<h$ in item (2)). Item (1) follows from Proposition \ref{finitecriteria}(4), and item (2) from Proposition \ref{finitecriteria}(3) along with Lemma \ref{11}(3) by letting $G^*(-):=\Ext_R^*(-,M)$.
\end{proof}

\begin{rem} As in Remark \ref{regularhyp}, suppose $S$ is regular so that the $X$'s can be replaced with $S$. If in addition $R$ admits a dualizing complex (e.g., if $R$ is a quotient of a Gorenstein ring of finite Krull dimension), then Theorem \ref{thm3}(1) follows from Remark \ref{regularhyp}(i) and \cite[Theorem 3.2]{CDT}.

\end{rem}


\begin{eg} Let $R$ be a reduced ring of dimension $1$. Let $S$ be the integral closure of $R$ in the total quotient ring $Q(R)$ (note $S$ is also Noetherian, by the Krull-Akizuki theorem). Since $S$ is reduced and integrally closed in $Q(R)=Q(S)$, we get that $S$ is normal (see, e.g., \cite[Corollary 2.1.13]{H-S}). As $\dim S=1$ and each localization of $S$ is a normal domain, we obtain that $S$ is regular. Finally, by Example \ref{instances}(3), $\m S \neq S$ for every maximal ideal $\m$ of $R$. Therefore, $R, S$ is a pair as in items (1), (2) and (3) of Theorem \ref{thm1} and as in Theorem \ref{thm3}.  If moreover $R$ is an  analytically unramified local ring, then $S$ is also module-finite over $R$, hence the scenario of Theorem \ref{thm1}(4) also applies. 
\end{eg}



\begin{eg} We give another instance where $R\rightarrow S$ is finite. As recalled in Example \ref{instances}(4), any algebra retraction $R\subseteq S$ satisfies
$\m S\neq S$ for every $\m \in \Max(R)$. In particular, we can take $$R=S^G \quad \mbox{for\, a\, finite} \quad G\leq {\rm Aut}(S)$$ whose order is invertible in $S$. If moreover $S$ is a  domain, then $S$ is module-finite over $R$; see \cite[Exercise 5.27, Exercise 5.28, and Proposition 5.4]{lw}. 
\end{eg}

\begin{eg} Suppose $S$ is Cohen--Macaulay and let $T$ be a polynomial or power series ring over $S$ in the variables $X_1,\ldots, X_n$. Let $J$ be an ideal of $T$ such that $(X_1,...,X_n)J$ is contained in the Jacobson radical of $T$ . Let $R=T/(X_1,\ldots, X_n)J$. Then,  $$R\to R/(X_1,\ldots, X_n)R\cong S$$ are rings as in Theorem \ref{thm1} and Theorem \ref{thm3}. 
\end{eg}

\section{Duals having finite homological dimensions, and freeness}\label{dualsetc}

\subsection{Some preliminaries}\label{preliminaries} We denote by $\Gdim_R$ the Gorenstein dimension over a ring $R$ (see \cite[Definition 12 and Definition 16]{mas}). An $R$-module $M$ is totally reflexive if $\Gdim_RM=0$. Given finite $R$-modules $M, N$, we write $M\approx N$ to mean $M\oplus F \cong N \oplus G$ for some projective $R$-modules $F,G$ (see \cite[Definition 3]{mas}). In this case, $M$ and $N$ are said to be stably isomorphic. Note that if $N$ is projective (resp.\,totally reflexive) and $M\approx N$, then $M$ is also projective (resp.\,totally reflexive). Let $\Tr M$ stand for the Auslander transpose of $M$ (see \cite[Definition 2]{mas}). Recall $M\approx \Tr \Tr M$ (see \cite[Remark (3) following Proposition 4]{mas}). It follows that $M$ is totally reflexive if and only if $\Tr M$  is totally reflexive. Also, note $\Tr M$ fits into an exact sequence \begin{equation}\label{seq-Tr}0\to M^*\to F_0 \to F_1 \to \Tr M \to 0,\end{equation} where $F_0$, $F_1$ are free $R$-modules and $M^*={\rm Hom}_R(M, R)$ is the (algebraic) dual of $M$. In case $R$ possesses a canonical module $\omega_R$, we use the notation $M^{\dagger}={\rm Hom}_R(M, \omega_R)$, the canonical dual of $M$. Thus, $M^{\dagger}\cong M^*$ whenever $R$ is a Gorenstein local ring. 

Next, recall that a finite $R$-module $M$ is said to satisfy $(\widetilde S_n)$, for a given  integer $n\geq 0$, if $$\depth_{R_{\p}}M_{\p}\geq \min\{n, \depth\,R_{\p}\} \quad \mbox{for\, all} \quad \p \in \Spec (R).$$ Since the depth of the zero module is set to be $\infty$ by a widely accepted convention, then in order to check that $M$ satisfies $(\widetilde S_n)$ it suffices to only consider primes in $\supp_R(M)$. Clearly, if $M$ satisfies $(\widetilde S_n)$ then it also satisfies $(\widetilde S_m)$ for all $m<n$. If $n=0$, the condition $(\widetilde S_0)$ trivially holds. For completeness, if $n\geq 1$, one (cohomological) criterion is as follows. If $M$ is $n$-torsionless (i.e., $\Ext^i_R(\Tr M, R)=0$ for all $i=1, \ldots, n$), then
$M$ satisfies $(\widetilde S_n)$ (see \cite[Proposition 11]{mas} or \cite[Propositions (16.30), (16.31) and (16.32)]{BV}).

\subsection{A key proposition and first corollaries} The complete intersection dimension of a finite module $M$ over a local ring $R$ is written $\cdim_RM$. It is related to classical homological dimensions by means of the inequalities $$\Gdim_RM\leq \cdim_RM\leq \pd_RM.$$ We refer to \cite{AGP} for the theory.   For any $n\geq 0$, the $n$-th syzygy module of $M$ over $R$ is denoted ${\rm Syz}^n_RM$, with ${\rm Syz}^0_RM=M$. 

\begin{convention}\label{conv} Whenever convenient, we denote by $\Hdim_RM$ any of the homological dimensions $\pd_RM, \CIdim_RM$ or $\Gdim_RM$.
    
\end{convention}


Our first result in this section is the following.

\begin{prop}\label{dual} Let $R$ be a local ring of depth $t$ and $M$ be a finite $R$-module. Let $r$ be an integer with $0\leq r\leq t$ such that $M$ satisfies $(\widetilde S_r)$ and, if $r<t$, $\Ext_R^{1\le i\le t-r}(M,R)=0$. If $\Hdim_RM^*<\infty$, then $\Hdim_RM=0$. 



\end{prop}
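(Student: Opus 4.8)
The engine will be the Auslander--Bridger equality: for a finite module $N$ over the local ring $R$ with $\Hdim_R N<\infty$ one has $\Hdim_R N=\depth R-\depth_R N=t-\depth_R N$, uniformly in the three cases (Auslander--Buchsbaum for $\pd$, Auslander--Bridger for $\Gdim$, and \cite{AGP} for $\CIdim$). Hence a finite module of finite homological dimension has $\Hdim$ equal to zero if and only if its depth equals $t$. The plan is therefore to transfer the problem to the dual $M^{*}$ and to prove the two statements (a) $\depth_R M^{*}\ge t$ and (b) $M$ is reflexive, i.e.\ $M\cong M^{**}$. Granting these, (a) together with the hypothesis $\Hdim_R M^{*}<\infty$ forces $\Hdim_R M^{*}=0$ by the equality above; one checks that for each of the three choices the class of modules of homological dimension zero is closed under $\Hom_R(-,R)$ (the dual of a free module is free, of a totally reflexive module is totally reflexive, and, using \cite{AGP}, of a module of complete intersection dimension zero is again of complete intersection dimension zero), so $M^{**}=(M^{*})^{*}$ also has homological dimension zero; finally (b) gives $M\cong M^{**}$ and hence $\Hdim_R M=0$.

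For (a) I would dualize a free resolution $\cdots\to F_1\to F_0\to M\to0$ to obtain a complex $0\to F_0^{*}\to F_1^{*}\to\cdots$ whose cohomology at spot $i$ is $\Ext_R^{i}(M,R)$ and whose kernel at spot $0$ is $M^{*}$. When $r<t$, the vanishing $\Ext_R^{i}(M,R)=0$ for $1\le i\le t-r$ makes this complex exact in those degrees, producing an exact sequence $0\to M^{*}\to F_0^{*}\to\cdots\to F_{t-r}^{*}\to C\to0$ with $C=\Tr(\syz_R^{\,t-r-1}M)$; thus $M^{*}$ is a $(t-r+1)$-st syzygy of $C$, and the standard depth estimate for syzygies yields $\depth_R M^{*}\ge\min\{t,\ \depth_R C+(t-r+1)\}$. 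It then suffices to prove $\depth_R C\ge r-1$, and this is precisely where the condition $(\widetilde S_r)$ on $M$ enters, transported to the transpose $C$ via \eqref{seq-Tr} and the depth behaviour of syzygies. I would establish this inequality (and handle the remaining case $r=t$, where $(\widetilde S_t)$ already gives $\depth_R M\ge t$) by induction on $t$, passing to $R_{\p}$ and to $R/xR$ for a suitable regular element $x$; both hypotheses localize, since $(\widetilde S_r)$ restricts to each $R_{\p}$ and $\Ext_R^{i}(M,R)_{\p}=\Ext_{R_{\p}}^{i}(M_{\p},R_{\p})$ vanishes in the required range because $\depth R_{\p}\le t$.

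For (b), reflexivity of $M$ is governed by the four-term Auslander--Bridger sequence $0\to\Ext_R^{1}(\Tr M,R)\to M\to M^{**}\to\Ext_R^{2}(\Tr M,R)\to0$, so it amounts to showing that $M$ is $2$-torsionfree; I would deduce this from $(\widetilde S_r)$ and the Ext-vanishing by the same localization/induction, isolating the low-depth cases ($t\le1$, and $r=t$) as base cases. The main obstacle is the depth estimate in (a): the Ext-vanishing by itself only delivers $\depth_R M^{*}\ge t-r+1$, and the Serre-type condition $(\widetilde S_r)$ must be made to supply the missing $r-1$ units of depth through the transpose $C=\Tr(\syz_R^{\,t-r-1}M)$; this transfer, rather than the formal homological bookkeeping, is where the real content of the proposition lies. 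Establishing reflexivity is a secondary subtlety precisely because the converse to the implication ``$n$-torsionless $\Rightarrow(\widetilde S_n)$'' recorded in the preliminaries is not available in this generality.
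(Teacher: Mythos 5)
Your overall skeleton (prove $\Hdim_RM^*=0$, prove $M$ reflexive, then dualize and use closure of Hdim-zero modules under $\Hom_R(-,R)$) would be sound if its two pillars could be established, and your formal bookkeeping (the Auslander--Buchsbaum/Bridger formulas, \cite[Theorem 1.4]{AGP}, and the dual-closure via \cite{bjor}) is correct. The gap is that you propose to prove (a) $\depth_RM^*\ge t$ and (b) $M\cong M^{**}$ from the condition $(\widetilde S_r)$ and the Ext-vanishing \emph{alone}, reserving $\Hdim_RM^*<\infty$ for the final step --- and neither (a) nor (b) is true in that generality. For (b): take $t=0$, hence $r=0$, where both hypotheses are vacuous; over $R=k[x,y]/(x,y)^2$ the module $M=k$ has $M^*\cong k^2$ and $M^{**}\cong k^4$, so $M$ is not reflexive. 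Your induction even lists $t\le 1$ and $r=t$ as base cases, but this base case is false; indeed, when $t=0$ the entire content of the proposition is the non-formal implication ``$M^*$ free $\Rightarrow$ $M$ free,'' which cannot be fed into your argument as a cost-free base case. For (a): taking $r=t\ge 3$, your claim would say that every module satisfying $(\widetilde S_t)$ over a local ring of depth $t$ (e.g.\ every maximal Cohen--Macaulay module over a Cohen--Macaulay local ring) has dual of depth $\ge t$; this fails over non-Gorenstein rings, already for $M=\omega_R$, whose dual (the anticanonical module) need not be maximal Cohen--Macaulay. So the ``missing $r-1$ units of depth'' you need for $C=\Tr(\syz_R^{t-r-1}M)$ simply are not supplied by $(\widetilde S_r)$; no localization/induction can produce them.

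The missing idea is that the finiteness hypothesis must be used exactly where you are trying to avoid it: it is what converts the Serre condition into torsionfreeness. Since $M^*$ is stably isomorphic to ${\rm Syz}^2_R\Tr M$, the hypothesis $\Hdim_RM^*<\infty$ forces $\Gdim_R\Tr M<\infty$, and for a module whose transpose has finite G-dimension the implication $(\widetilde S_r)\Rightarrow\Ext_R^{1\le i\le r}(\Tr M,R)=0$ does hold --- this is \cite[Theorem 5.8(1)]{CMSZ}, precisely the converse that you correctly observe is unavailable in general, yet implicitly rely on. The paper's proof therefore runs entirely through the transpose: the hypothesis $\Ext_R^{1\le i\le t-r}(M,R)=0$ says $\Tr M$ is $(t-r)$-torsionless, giving $\depth_R\Tr M\ge t-r$; hence $\Hdim_R\Tr M\le r$ by the relevant depth formula (finiteness again coming from $M^*\approx{\rm Syz}^2_R\Tr M$); and the vanishing $\Ext_R^{1\le i\le r}(\Tr M,R)=0$ obtained from \cite{CMSZ} pushes this to $\Hdim_R\Tr M=0$, whence the conclusion via $M\approx\Tr\Tr M$. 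Without rerouting the finiteness hypothesis through $\Tr M$ in this way, your central depth estimate and the reflexivity of $M$ cannot be recovered.
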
  

\begin{proof} First, if $r<t$ then, as $\Ext_R^{1\le i\le t-r}(\Tr \Tr M,R)=0$ (recall $M$ and $\Tr \Tr M$ are stably isomorphic), the module $\Tr M$ is $(t-r)$-torsionless, hence $\depth_R\Tr M \geq t-r$ (see \cite[Proposition 11(c)]{mas}), and the case $r=t$ is trivial. Now notice that $M^*$ is stably isomorphic with ${\rm Syz}^2_R \Tr M$. So, in any of the possibilities for $\Hdim_R$, we obtain $\Gdim_R\Tr M<\infty$. Also, by \cite[Theorem 5.8(1)]{CMSZ}, we get (if $r\geq 1$) $$\Ext_R^{1\leq i \leq r}(\Tr M, R)=0.$$

Next, we give a  proof for each choice of $\Hdim_R$.

\smallskip

(1) The case $\Hdim_RM^*=\pd_RM^*$. Since $M^* \approx {\rm Syz}^2_R \Tr M$, we have $\pd_R\Tr M<\infty$. By the Auslander--Buchsbaum formula, $\pd_R\Tr M=t-\depth_R\Tr M\leq r$. Now, by \cite[p.\,154, Lemma 1(iii)]{mat} we get $\pd_R\Tr M=0$, i.e., $\Tr M$ is $R$-free. Since $M$ is stably isomorphic to $\Tr \Tr M$, it follows that $M$ must be free as well.  

\smallskip

(2) The case $\Hdim_RM^*=\Gdim_RM^*$. By the Auslander--Bridger formula, $$\Gdim_R\Tr M=t-\depth_R\Tr M\leq r.$$ Now, if $r\geq 1$, $\Ext_R^{1\leq i \leq r}(\Tr M, R)=0$ implies $\Gdim_R\Tr M=0$ (see \cite[1.2.7(iii)]{Cr00}), i.e., $\Tr M$ is totally reflexive and therefore $M$ has the same property. The case $r=0$ is clear.

\smallskip

(3) The case $\Hdim_RM^*=\CIdim_RM^*$.  By \cite[Theorem 1.4]{AGP} and item (2) above, we obtain that $M$ is totally reflexive, and hence so is $M^*$. Hence, $\Gdim_R M^*=0$.  By \cite[Theorem 1.4]{AGP}, we then get $$\CIdim_RM^*=\Gdim_RM^*=0.$$  Now, applying \cite[Lemma 3.5]{bjor} we derive  $\CIdim_RM^{**}=0$. Since $M$ is reflexive, we are done.   
\end{proof}

\begin{rem}
When $t=0$ (which forces $r=0$, and note every module satisfies $(\widetilde S_0)$), our Proposition \ref{dual} immediately gives that if $M^*$ is free (resp.\,totally reflexive), then $M$ is also free (resp.\,totally reflexive). Moreover, if $M$ satisfies $(\widetilde S_t)$ and $\pd_RM^*<\infty$ then $M$ must be free. Compare it with \cite[Theorem 3.10]{dle}.
\end{rem}

\begin{rem} It is customary to say that a finite $R$-module $M$ is an {\it ideal-module} if $M^*$ is free; this mimics the standard fact that $I^*\cong R$ whenever $I$ is an ideal with ${\rm grade}\,I\geq 2$. Thus, as a consequence of Proposition \ref{dual} with $t=r$, we obtain that a non-free ideal-module over a local ring of depth $t$ cannot satisfy $(\widetilde S_t)$.
\end{rem}


\begin{cor}\label{specseq}
Let $R$ be a local ring of depth $t$ and $M$ a finite $R$-module such that $\Hdim_RM^*<\infty$. Let $N$ be a non-zero finite $R$-module such that $\Ext^{1\leq i\leq t-1}_R(N,M)=0$. The following assertions hold true:

\begin{enumerate}[\rm(1)]
    \item If $\depth_R\Hom_R(N, M)\geq t$, then $\Hdim_RM=0$.

    \item If $\depth_RN\geq t$ and $\Hom_R(N,M)$ is free, then both $M$ and $N^*$ are free.
\end{enumerate}
\end{cor}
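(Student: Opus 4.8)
The plan is to reduce both statements to Proposition \ref{dual} by promoting the depth hypothesis on $\Hom_R(N,M)$ to a depth (and Serre) statement about $M$ itself. The engine is the standard isomorphism $\mathbf{R}\Gamma_{\m}\RHom_R(N,M)\simeq\RHom_R(N,\mathbf{R}\Gamma_{\m}M)$ (valid since $N$ is finite), which furnishes a spectral sequence
\[
E_2^{p,q}=\Ext^p_R(N,H^q_{\m}(M))\ \Longrightarrow\ H^{p+q}_{\m}(\RHom_R(N,M)).
\]
First I would exploit $\Ext^{1\le i\le t-1}_R(N,M)=0$: via the hypercohomology spectral sequence of the complex $\RHom_R(N,M)$ (whose cohomology is $\Hom_R(N,M)$ in degree $0$ and vanishes in degrees $1,\dots,t-1$), one gets $H^n_{\m}(\RHom_R(N,M))\cong H^n_{\m}(\Hom_R(N,M))$ for $n\le t-1$, and these vanish because $\depth_R\Hom_R(N,M)\ge t$.

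For part (1) I would then read off a lower bound on $\depth_R M$. Set $d=\depth_R M$, so that $q=d$ is the lowest nonzero row of the spectral sequence above. The corner entry $E_2^{0,d}=\Hom_R(N,H^d_{\m}(M))$ admits no nonzero incoming or outgoing differential, hence survives to $E_\infty$ and embeds into $H^d_{\m}(\RHom_R(N,M))$. Were $d\le t-1$, this target would be $0$, forcing $\Hom_R(N,H^d_{\m}(M))=0$; but $H^d_{\m}(M)$ is a nonzero Artinian module, so it contains a copy of $\sk$, and since $\Hom_R(N,\sk)\cong\Hom_{\sk}(N/\m N,\sk)\ne 0$ the left-exactness of $\Hom_R(N,-)$ gives $\Hom_R(N,H^d_{\m}(M))\ne 0$, a contradiction. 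Thus $\depth_R M\ge t$.

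To meet the hypotheses of Proposition \ref{dual} I would take $r=t$ and must therefore upgrade this global bound to the Serre condition $(\widetilde S_t)$ for $M$. The idea is to localize the entire argument: at $\p\in\supp_R M\cap\supp_R N$ the same spectral sequence over $R_{\p}$ (using that $\Ext^{1\le i\le t-1}_R(N,M)=0$ localizes) shows $\depth_{R_{\p}}\Hom_R(N,M)_{\p}=\depth_{R_{\p}}M_{\p}$ whenever $\depth_{R_{\p}}M_{\p}<t$, so that a Serre condition on $\Hom_R(N,M)$ passes to $M$. Once $M$ satisfies $(\widetilde S_t)$, Proposition \ref{dual} (case $r=t$, where no $\Ext_R(M,R)$-vanishing is required) yields $\Hdim_R M=0$. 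I expect the genuine difficulty to be exactly here: the bare inequality $\depth_R\Hom_R(N,M)\ge t$ controls only the closed point, whereas $(\widetilde S_t)$ demands depth at every prime of $\supp_R M$, so the delicate step is to secure enough depth of $\Hom_R(N,M)$ at the non-maximal primes (this is automatic, e.g., when $R$ is Cohen--Macaulay, where $\depth_R M\ge t$ already forces $M$ to be maximal Cohen--Macaulay).

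Finally, part (2) follows by feeding freeness into part (1). If $\Hom_R(N,M)$ is free then it has depth $t$ (or is zero, of depth $\infty$), so $\depth_R\Hom_R(N,M)\ge t$ and part (1), applied with $\Hdim_R=\pd_R$ (legitimate since $\pd_RM^*<\infty$), gives that $M$ is free, say $M\cong R^{a}$. Then $\Hom_R(N,M)\cong\Hom_R(N,R)^{a}=(N^*)^{a}$ is free, and since a direct summand of a finite free module over a local ring is free, $N^*$ is free. The hypothesis $\depth_R N\ge t$ plays only an auxiliary role in this route (presumably it is what is genuinely needed for the more general statement the authors allude to).
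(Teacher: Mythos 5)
Your treatment of part (1) is, at its core, the paper's own proof: both arguments run the spectral sequence(s) attached to $\mathbf{R}\Gamma_{\m}\RHom_R(N,M)$, use $\Ext^{1\leq i\leq t-1}_R(N,M)=0$ together with $\depth_R\Hom_R(N,M)\geq t$ to kill the generalized local cohomology $H^{i}_{\m}(N,M)$ for $i\leq t-1$, deduce $\depth_RM\geq t$, and then invoke Proposition \ref{dual} with $r=t$. The only cosmetic difference is that the paper quotes \cite[Proposition 2.1]{FJMS} for the spectral sequence $H^i_{\m}(\Ext^j_R(N,M))\Rightarrow H^{i+j}_{\m}(N,M)$ and \cite[Theorem 2.3]{suz} for the depth-sensitivity of $H^{*}_{\m}(N,-)$, whereas your corner argument with $E_2^{0,d}=\Hom_R(N,H^d_{\m}(M))$ reproves Suzuki's theorem (small slip: $H^d_{\m}(M)$ need not be Artinian for $d=\depth_RM$, but you only need that a nonzero $\m$-torsion module has nonzero socle, so the argument stands). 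As for the step you flag as ``the genuine difficulty'' --- upgrading $\depth_RM\geq t$ to $(\widetilde S_t)$ --- the paper does not address it at all: its proof reads ``we are done once $M$ satisfies $(\widetilde S_t)$\dots Even more, we will show $\depth_RM\geq t$'', silently treating depth at the closed point as implying $(\widetilde S_t)$. That implication is correct when $R$ is Cohen--Macaulay (then $\depth_RM\geq t=\dim R$ makes $M$ maximal Cohen--Macaulay, and this localizes), but it fails in general: over $R=k[[x,y,z]]/(xy,xz)$, which has depth $t=1$, the module $M_0=R/(x,y^2)$ has $\depth_RM_0=1$ yet fails $(\widetilde S_1)$ at $\p=(x,y)$. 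Worse, $M=M_0\oplus R$ and $N=R/(y,z)$ satisfy every hypothesis of the corollary ($M^*\cong R$, $\Hom_R(N,M)\cong R/(y,z)$ has depth $1$, and the Ext-range is empty since $t=1$), while the conclusion fails: $\Hdim_RM=0$ would force $M$, hence its summand $M_0$, to be reflexive, impossible since $M_0^*=0\neq M_0$. So your hesitation is well founded; the step you could not complete is a genuine defect of the paper's own argument (harmless exactly in the Cohen--Macaulay case you identified), not an idea you failed to find.

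Part (2) is where your proposal has a gap of its own relative to the paper. The paper obtains (2) from (1) combined with the external result \cite[Corollary 3.7]{dle}. Your self-contained substitute quietly reads the blanket hypothesis $\Hdim_RM^*<\infty$ as $\pd_RM^*<\infty$; but by Convention \ref{conv}, $\Hdim_R$ is a fixed one of $\pd_R$, $\CIdim_R$, $\Gdim_R$, and part (2) must also be proved when it is $\Gdim_R$ or $\CIdim_R$. In those cases part (1) yields only $\Gdim_RM=0$ (respectively $\CIdim_RM=0$), i.e.\ $M$ totally reflexive, and your chain ``$M\cong R^{a}$, hence $\Hom_R(N,M)\cong(N^*)^{a}$, hence $N^*$ is a direct summand of a free module'' never gets started. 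Passing from ``$M$ totally reflexive, $\Hom_R(N,M)$ free, $\depth_RN\geq t$, Ext-vanishing'' to ``$M$ free'' is precisely the nontrivial content supplied by \cite[Corollary 3.7]{dle} (compare \cite[Theorem 6.6]{dg}, used the same way in the proof of Proposition \ref{pdhomfinite}), and nothing in your argument replaces it. As written, your proof of (2) covers only the projective-dimension instance of the statement.
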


\begin{proof}  Item (2) follows directly from  (1) together with \cite[Corollary 3.7]{dle}. To prove (1), notice that by Proposition \ref{dual} we are done once $M$ satisfies $(\widetilde S_t)$, which we claim to hold. Even more, we will show $\depth_RM\geq t$. To this end, let $\mathfrak{m}$ denote the maximal ideal of $R$ and consider the spectral sequence (see \cite[Proposition 2.1]{FJMS})
$$E_2^{i,j}=H^i_\mathfrak{m}(\Ext^j_R(N,M))\Rightarrow H^{i+j}_\mathfrak{m}(N,M),$$ where $H^{i+j}_\mathfrak{m}(N,M)$ stands for the $(i+j)$-th generalized local cohomology module of the pair $N, M$ (see \cite{H0}, also \cite{suz}).  Since $$E^{i,0}_2=0 \quad \mbox{for\, all} \quad i<t, \quad \mbox{and} \quad E^{i,j}_2=0 \quad \mbox{for\, all} \quad j=1,\ldots,t-1,$$ we conclude by convergence that $H^i_\mathfrak{m}(N,M)=0$ whenever $i<t$, and therefore $\depth_RM\geq t$ by \cite[Theorem 2.3]{suz}.
\end{proof}

\begin{ques}\label{questionhom} Does Corollary \ref{specseq}(2) remain true if the freeness condition on $\Hom_R(N,M)$ is replaced with $\pd_R\Hom_R(N,M)<\infty$? What if $N=M$? A particular positive answer for this question will be given later in Proposition \ref{pdhomfinite}.
    
\end{ques}

\begin{rem}\label{remdepth} We record the following immediate byproduct of the proof of Corollary \ref{specseq} (more precisely, from the spectral sequence argument along with \cite[Theorem 2.3]{suz}). Let $R$ be a local ring and $M, N$ non-zero finite $R$-modules, and set $s:={\rm depth}_R{\rm Hom}_R(N, M)$. If either $s=1$ or $s\geq 2$ and $\Ext^{1\leq i\leq s-1}_R(N,M)=0$, then ${\rm depth}_RM\geq s$. 

    
\end{rem}

For the proposition below, given a finite module $M$ over a local ring $R$, we denote by $\cx_RM$ and $\curv_RM$ the complexity and the curvature of $M$, respectively. For the definitions and properties, see \cite[4.2]{avra0}. It should be noticed that item (4) of the next proposition is an extension of Proposition \ref{dual}.

\begin{prop}\label{newhom} Let $R$ be a local ring of depth $t$, $M$ a finite $R$-module, and $r$ an integer with $0\leq r\leq t$. If $r<t$, suppose $\Ext_R^{1\le i\le t-r}(M,R)=0$. Let $N\neq0$ be a finite $R$-module satisfying  $\pd_R N\leq t-r$. The following assertions hold true: 
\begin{enumerate}[\rm(1)]

\item $\cx_R\Hom_R(M,N)=\cx_RM^*$. 

\item $\curv_R\Hom_R(M,N)=\curv_RM^*$.

\item 
$\Hdim_R\Hom_R(M,N)=\Hdim_RM^*+\pd_RN.$

\item If $M$ satisfies $(\widetilde S_r)$ and $\Hdim_R\Hom_R(M,N)<\infty$, then $\Hdim_RM=0$.






\end{enumerate}
 
\end{prop}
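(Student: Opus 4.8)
The plan is to reduce everything to a computation about the Auslander transpose $\Tr M$ and its syzygies, exactly as in the proof of Proposition \ref{dual}, since that result is the $N=R$ (more precisely $\pd_R N = 0$) special case of what we want here. The key structural input is the exact sequence \eqref{seq-Tr}, namely $0\to M^*\to F_0\to F_1\to \Tr M\to 0$, which shows $M^*\approx {\rm Syz}^2_R\Tr M$. First I would establish the homological bridge between $\Hom_R(M,N)$ and $M^*$ that drives items (1)--(3): the idea is that a finite free presentation $F_1\to F_0\to M\to 0$ gives, upon applying $\Hom_R(-,N)$, a presentation of $\Hom_R(M,N)$ whose relevant homology is controlled by $M^*\otimes(\text{something})$ together with the $\Ext_R^i(M,R)$ vanishing we have assumed in the range $1\le i\le t-r$. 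More conceptually, I expect the vanishing $\Ext_R^{1\le i\le t-r}(M,R)=0$ plus $\pd_R N\le t-r$ to force the natural map $M^*\otimes_R N\to \Hom_R(M,N)$ (suitably derived) to be a quasi-isomorphism in the relevant degrees, so that $\Hom_R(M,N)$ is, up to the homological invariants we care about, $M^*\otimes_R^{\mathbf L} N$.

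\smallskip

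Granting that identification, items (1), (2), (3) follow from standard behaviour of complexity, curvature and the homological dimensions under tensoring with a module $N$ of finite projective dimension. For (3) specifically, the expected mechanism is the change-of-rings / tensor-product formula: if $\pd_R N<\infty$, then for $\Hdim_R\in\{\pd_R,\CIdim_R,\Gdim_R\}$ one has $\Hdim_R(X\otimes_R^{\mathbf L}N)=\Hdim_R X+\pd_R N$ whenever the left side is finite, and here $X=M^*$ up to stable isomorphism; the Auslander--Buchsbaum-type and Auslander--Bridger-type formulas then give the additive relation. For complexity and curvature, tensoring with a finite-projective-dimension module shifts a minimal resolution only by a bounded (polynomial-free) factor, so $\cx$ and $\curv$ are unchanged, yielding (1) and (2). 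The point is that $N$ contributes only its projective dimension to $\Hdim$ and contributes nothing to the asymptotic invariants $\cx$ and $\curv$.

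\smallskip

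Item (4) is then a clean deduction. Assuming $M$ satisfies $(\widetilde S_r)$ and $\Hdim_R\Hom_R(M,N)<\infty$, part (3) gives $\Hdim_R M^*=\Hdim_R\Hom_R(M,N)-\pd_R N<\infty$, so $M^*$ has finite $\Hdim_R$. The hypotheses on $M$ (the condition $(\widetilde S_r)$ and the vanishing $\Ext_R^{1\le i\le t-r}(M,R)=0$ when $r<t$) are precisely those of Proposition \ref{dual}, so applying that proposition directly yields $\Hdim_R M=0$, as desired.

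\smallskip

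The main obstacle I anticipate is making the identification of $\Hom_R(M,N)$ with $M^*\otimes_R^{\mathbf L}N$ rigorous in the stated degree range, i.e.\ verifying that the assumed $\Ext$-vanishing $\Ext_R^{1\le i\le t-r}(M,R)=0$ together with $\pd_R N\le t-r$ is exactly what is needed to kill the correction terms in the spectral sequence (or the tensor-evaluation/biduality comparison map) relating $\Hom_R(M,N)$ to $M^*\otimes_R N$. One must track carefully that the range $t-r$ of allowed $\Ext$-vanishing matches the projective dimension bound on $N$, so that a length-$(t-r)$ free resolution of $N$ meets exactly the degrees in which $M$ is ``Ext-orthogonal'' to $R$; this degree bookkeeping, rather than any deep new idea, is where the proof needs care. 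Once the comparison is in place, parts (1)--(4) are formal consequences of known additivity and invariance properties of $\Hdim_R$, $\cx_R$ and $\curv_R$ under $-\otimes_R N$ with $\pd_R N<\infty$.
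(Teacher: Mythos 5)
Your overall route coincides with the paper's: reduce everything to the single claim that, under the stated hypotheses, the evaluation map $M^*\otimes_RN\to\Hom_R(M,N)$ is an isomorphism and $M^*$, $N$ are Tor-independent (so that $\Hom_R(M,N)\simeq M^*\otimes_R^{\mathbf L}N$), then quote the invariance of $\cx_R$, $\curv_R$ and the additivity of $\Hdim_R$ under derived tensoring with a module of finite projective dimension for items (1)--(3), and deduce (4) from (3) together with Proposition \ref{dual}. This is exactly the paper's proof (which cites \cite[Proposition 4.2.4(6)]{avra0} for (1)--(2) and \cite[Theorem (A.7.6)]{Cr00}, \cite[Theorem 3.11]{cdimcomplex}, \cite[Theorem 5.1]{isw04} for (3)). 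One cosmetic caveat: since (3) is an equality of possibly infinite quantities, you need those additivity formulas in their unconditional form, not merely ``whenever the left side is finite''; the cited results do hold in that form.

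The genuine gap is that the comparison isomorphism -- which you yourself flag as ``the main obstacle'' -- is the entire mathematical content of the proposition, and you leave it at the level of ``I expect the vanishing to force the map to be a quasi-isomorphism.'' Moreover, the mechanism you sketch (matching a length-$(t-r)$ resolution of $N$ degree-by-degree against the vanishing range of $\Ext_R^{1\le i\le t-r}(M,R)$ in a spectral sequence) does not close formally: the obstructions to $M^*\otimes_RN\to\Hom_R(M,N)$ being an isomorphism are $\Tor^R_1(\Tr M,N)$ and $\Tor^R_2(\Tr M,N)$, and in the spectral-sequence picture the first possibly nonzero differential out of the corner lands in a module built from $\Ext_R^{t-r+1}(M,R)$, which is \emph{not} assumed to vanish; so no purely formal degree count suffices. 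The paper supplies the missing idea via the transpose: since $M\approx\Tr\Tr M$, the hypothesis $\Ext_R^{1\le i\le t-r}(M,R)=0$ says precisely that $\Tr M$ is $(t-r)$-torsionfree, hence stably isomorphic to ${\rm Syz}^{t-r}_RX$ for some $X$ by \cite[Proposition 11(b)]{mas}; as $\pd_RN\le t-r$, dimension shifting then kills \emph{all} of $\Tor^R_{>0}(\Tr M,N)$, and the Auslander--Bridger exact sequence $\Tor^R_2(\Tr M,N)\to M^*\otimes_RN\to\Hom_R(M,N)\to\Tor^R_1(\Tr M,N)$ of \cite[Theorem 2.8(b)]{AuB} yields the isomorphism, while $M^*\approx{\rm Syz}^2_R\Tr M$ gives $\Tor^R_{>0}(M^*,N)\cong\Tor^R_{>2}(\Tr M,N)=0$. (Alternatively, one can run an induction on $\pd_RN$, using the assumed $\Ext$-vanishing to show $\Ext^1_R(M,{\rm Syz}^1_RN)=0$ by chaining it into $\Ext^{\pd_RN}_R(M,{\rm Syz}^{\pd_RN}_RN)=0$; but some such argument must be made, and it is the one step where the hypotheses are actually used.) Everything after this lemma in your proposal is correct and matches the paper.
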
  
\begin{proof} First, if $r=t$ then $N$ is free, and hence items (1), (2), and (3) follow easily, whereas (4) is a consequence of Proposition \ref{dual}.

So, we can consider $r<t$. Since $\Ext_R^{1\le i\le t-r}(\Tr \Tr M,R)=0$, the module $\Tr M$ is $(t-r)$-torsionfree and hence $\Tr M $ is stably isomorphic to $\syz^{t-r} X$ for some $R$-module $X$ (see \cite[Proposition 11(b)]{mas}). Now, as $\pd_R N\leq t-r$, we must have $$\Tor^R_{>0}(\Tr M, N)\cong \Tor^R_{>t-r}(X,N)=0.$$ By \cite[Theorem 2.8(b)]{AuB}, we then obtain $M^*\otimes_R N\cong \Hom_R(M,N)$.  Furthermore, $\Tor^R_{>0}(M^*,N)\cong \Tor^R_{>2}(\Tr M, N)=0$, hence $$M^*\otimes_R N\cong M^* \otimes_R^{\mathbf L} N.$$ Also, $\cx_R(N)=0=\curv_R(N)$ (see \cite[Remark 4.2.3]{avra0}).

Now, assertions (1) and (2)  follow from \cite[Proposition 4.2.4(6)]{avra0}. Assertion (3) follows from \cite[Theorem (A.7.6)]{Cr00}, \cite[Theorem 3.11]{cdimcomplex} and \cite[Theorem 5.1]{isw04}] -- in the last reference, we make $R=S=T$ and take into account that $M^*$ and $N$ are Tor-independent, as noticed above  -- and, along with Proposition \ref{dual}, it implies assertion (4).

\end{proof}

\begin{rem} It is possible to avoid the explicit requirement of the Serre-type condition $(\widetilde S_r)$ by replacing it with different hypotheses. Indeed, using \cite[Proposition 11]{mas} or \cite[Proposition (16.30) and Proposition (16.31)]{BV}, all the assertions in Proposition \ref{dual} as well as Proposition \ref{newhom}(4) are seen to be valid whenever $M$ is $r$-torsionless and $\Tr M$ is $(t-r)$-torsionless.
\end{rem}

Proposition \ref{newhom}(4) allows us to extend Corollary \ref{specseq} as follows.

\begin{cor}\label{corhom} Let $R$ be a local ring of depth $t$, $M$ a finite $R$-module, and $r$ an integer with $0\leq r\leq t$ such that, if $r<t$, $\Ext^{1\leq i\leq t-r}_R(M,R)=0$. Suppose there exists a finite $R$-module $N\neq0$ such that $\pd_RN\leq t-r$ and $\Hdim_R\Hom_R(M,N)<\infty$. Also, assume that there exists a finite $R$-module $N'\neq0$ satisfying $\Ext_R^{1\le i \le t-1}(N',M)=0$. The following assertions hold true:
\begin{enumerate}[\rm(1)]
    \item If $\depth_R\Hom_R(N',M)\geq t$, then $\Hdim_RM=0$.
    \item If $\depth_RN'\geq t$ and $\Hom_R(N',M)$ is free, then $M$ and $(N')^*$ are free. 
\end{enumerate}
\end{cor}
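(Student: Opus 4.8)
The plan is to reduce both assertions to Corollary \ref{specseq}, which already gives exactly these conclusions but under the hypothesis $\Hdim_RM^*<\infty$ rather than the finiteness of $\Hdim_R\Hom_R(M,N)$. The link between the two finiteness conditions is supplied by Proposition \ref{newhom}(3), so the only genuinely new ingredient here is the passage from the weaker-looking datum $\Hdim_R\Hom_R(M,N)<\infty$ to $\Hdim_RM^*<\infty$.

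First I would verify that the standing hypotheses of the corollary are precisely those required to invoke Proposition \ref{newhom}: $R$ is local of depth $t$, we have $0\le r\le t$, the vanishing $\Ext_R^{1\le i\le t-r}(M,R)=0$ holds when $r<t$, and $N\ne 0$ with $\pd_RN\le t-r$. Part (3) of that proposition then yields
$$\Hdim_R\Hom_R(M,N)=\Hdim_RM^*+\pd_RN,$$
so, both summands on the right being nonnegative, the assumed finiteness of the left-hand side forces $\Hdim_RM^*<\infty$. With this in hand, the module denoted $N'$ here satisfies exactly the requirements imposed on the module ``$N$'' in Corollary \ref{specseq}, namely $N'\ne 0$ and $\Ext_R^{1\le i\le t-1}(N',M)=0$. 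Assertion (1) is then Corollary \ref{specseq}(1) applied with the hypothesis $\depth_R\Hom_R(N',M)\ge t$, and assertion (2) is Corollary \ref{specseq}(2) applied with $\depth_RN'\ge t$ and $\Hom_R(N',M)$ free (in particular the freeness-to-freeness step for $(N')^*$ is absorbed into Corollary \ref{specseq}(2) and requires no new argument).

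I do not expect a real obstacle, since the substance resides in the already-established Proposition \ref{newhom} and Corollary \ref{specseq}; the one point demanding care is confirming that the hypotheses of Proposition \ref{newhom}(3) coincide verbatim with the standing hypotheses here, so that the reduction of $\Hdim_R\Hom_R(M,N)<\infty$ to $\Hdim_RM^*<\infty$ is legitimate (note in particular that parts (1)--(3) of Proposition \ref{newhom}, unlike part (4), do not require $(\widetilde S_r)$, which is fortunate because the present corollary makes no such assumption on $M$). As an alternative that never mentions $M^*$, one could argue directly: the conditions on $N'$ give $\depth_RM\ge t$ through the spectral-sequence argument used in Corollary \ref{specseq}, whence $M$ satisfies $(\widetilde S_r)$, and then Proposition \ref{newhom}(4) delivers $\Hdim_RM=0$ for assertion (1), with assertion (2) following as before.
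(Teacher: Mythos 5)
Your proposal is correct and matches the paper's own proof essentially verbatim: the paper also first invokes Proposition \ref{newhom}(3) to obtain $\Hdim_RM^*<\infty$ and then applies Corollary \ref{specseq}(1) and (2), and its second proof of (1) is exactly your alternative route, using the spectral-sequence depth bound (recorded in the paper as Remark \ref{remdepth}) to get $\depth_RM\geq t$ and then Proposition \ref{newhom}(4).
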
 
\begin{proof}
We provide two proofs of (1). For the first one, note Proposition \ref{newhom}(3) ensures that $\Hdim_RM^*<\infty$ and therefore Corollary \ref{specseq}(1) applies. As to the second proof, from Remark \ref{remdepth} we derive $\depth_RM\geq t$, so $M$ satisfies $(\widetilde S_t)$ and, consequently, $(\widetilde S_r)$. Thus, Proposition \ref{newhom}(4) applies. Item (2) follows directly from Corollary \ref{specseq}(2).
\end{proof}


\begin{cor}
Let $R$ be a local ring of depth $t$ and $M$ a finite $R$-module such that $\Hdim_RM^*<\infty$ and $\Ext^{1\leq i\leq t-1}_R(M,M)=0$. If $\depth_R\Hom_R(M,M)\geq t$, then $\Hdim_RM=0$. Moreover, if $\Hom_R(M,M)$ is free, then so is $M$.
\end{cor}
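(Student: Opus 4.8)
The plan is to recognize this corollary as the special case $N=M$ of Corollary \ref{specseq}, handling the ``Moreover'' clause with a little extra care. The standing hypotheses $\Hdim_RM^*<\infty$ and $\Ext^{1\leq i\leq t-1}_R(M,M)=0$ are precisely what Corollary \ref{specseq} requires of the pair $(N,M)=(M,M)$ (we assume $M\neq 0$, the case $M=0$ being vacuous). Hence the first assertion is immediate: if $\depth_R\Hom_R(M,M)\geq t$, then part (1) of Corollary \ref{specseq} gives $\Hdim_RM=0$.

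For the second assertion, I would first observe that if $\Hom_R(M,M)$ is free (and nonzero, since $M\neq 0$), then $\depth_R\Hom_R(M,M)=\depth R=t$, so the hypothesis of part (1) is automatically satisfied and $\Hdim_RM=0$ follows. The point I expect to be the main obstacle is that this conclusion alone does \emph{not} yield freeness: for $\Hdim_R=\Gdim_R$ or $\CIdim_R$, the equality $\Hdim_RM=0$ only says that $M$ is totally reflexive, respectively of complete intersection dimension zero. To upgrade to genuine freeness I would instead invoke part (2) of Corollary \ref{specseq} (again with $N=M$), which outputs that both $M$ and $M^*$ are free, but at the cost of the extra hypothesis $\depth_RM\geq t$.

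It therefore remains to produce the inequality $\depth_RM\geq t$. When $t=0$ this is automatic. When $t\geq 1$, I would apply Remark \ref{remdepth} to the pair $(N,M)=(M,M)$: setting $s:=\depth_R\Hom_R(M,M)=t$, the hypothesis $\Ext^{1\leq i\leq s-1}_R(M,M)=0$ needed there is exactly the standing assumption $\Ext^{1\leq i\leq t-1}_R(M,M)=0$, so the remark yields $\depth_RM\geq s=t$. With this inequality in hand, Corollary \ref{specseq}(2) applies and gives that $M$ (and in fact $M^*$) is free, completing the argument.
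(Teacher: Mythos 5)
Your argument is correct and follows essentially the same route as the paper, whose entire proof reads: apply Corollary \ref{corhom} with $N'=M$, $N=R$, and $r=t$ --- a specialization that unwinds to exactly your invocation of Corollary \ref{specseq}(1) and (2). The one step you spell out that the paper leaves implicit is deriving $\depth_RM\geq t$ via Remark \ref{remdepth} so that the freeness conclusion of Corollary \ref{specseq}(2) becomes available for the ``Moreover'' clause; since this is the same mechanism the paper uses inside its proof of Corollary \ref{corhom}(1), it is a legitimate filling-in of a tacit step rather than a genuinely different approach.
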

\begin{proof} Apply Corollary \ref{corhom} with $N'=M$, $N=R$, and $r=t$.
\end{proof}

\begin{prop}\label{pdhomfinite}
Let $R$ be a local ring of depth $t$, $M$ a finite $R$-module, and $r$ an integer with $0\leq r\leq t$ such that $M$ satisfies $(\widetilde S_r)$ and, if $r<t$, $\Ext^{1\leq i\leq t-r}_R(M,R)=0$. Suppose there exists a finite $R$-module $N\neq0$ such that $\pd_RN\leq t-r$ and $\Hdim_R\Hom_R(M,N)<\infty$. Also, assume that there exists a finite $R$-module $N'\neq0$ satisfying $\Hom_R(N',M)\neq0$ and $\Ext^{1\leq i\leq t-1}_R(N',M)=0$. If $\pd_R\Hom_R(N',M)<\infty$, then both $M$ and $N'$ are free.
\end{prop}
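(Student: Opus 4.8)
The plan is to reduce the whole statement to the freeness of the single module $\Hom_R(N',M)$, and then to run the duality machinery of this section. First I would dispose of the data attached to $M$ and $N$. Since $M$ satisfies $(\widetilde S_r)$, the Ext-vanishing holds, $\pd_RN\le t-r$, and $\Hdim_R\Hom_R(M,N)<\infty$, Proposition \ref{newhom}(3) yields $\Hdim_RM^*<\infty$ and Proposition \ref{newhom}(4) yields $\Hdim_RM=0$. By the Auslander--Bridger (resp. Auslander--Buchsbaum) formula this forces $\depth_RM=t$, and in every case $M$ is totally reflexive, hence reflexive; in particular $M$ satisfies $(\widetilde S_t)$.

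The key intermediate step is to promote the hypothesis $\pd_R\Hom_R(N',M)<\infty$ to the assertion that $\Hom_R(N',M)$ is actually free. Taking a free resolution $F_\bullet\to N'$ and applying $\Hom_R(-,M)$, the vanishing $\Ext^{1\le i\le t-1}_R(N',M)=0$ produces an exact sequence
\begin{equation*}
0\to \Hom_R(N',M)\to M^{b_0}\to M^{b_1}\to\cdots\to M^{b_{t-1}}\to K\to 0,
\end{equation*}
in which every middle term has depth $t$. Breaking this into short exact sequences and invoking the depth lemma $t$ times gives $\depth_R\Hom_R(N',M)\ge t$ (this sharpens Remark \ref{remdepth}, which supplies the reverse inequality). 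Since $\pd_R\Hom_R(N',M)<\infty$, the Auslander--Buchsbaum formula then forces $\pd_R\Hom_R(N',M)=0$; that is, $\Hom_R(N',M)$ is free.

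It remains to extract freeness of $M$ and $N'$ from the situation just obtained. When $\Hdim_R=\pd$, the module $M$ is already free by Proposition \ref{dual}(1); then $\Hom_R(N',M)\cong ((N')^*)^{\oplus c}$ is free, so $(N')^*$ is free, and the Ext-vanishing becomes $\Ext^{1\le i\le t-1}_R(N',R)=0$, which says precisely that $\Tr N'$ is $(t-1)$-torsionless. I would then feed $(N')^*$ free together with this torsion information into the torsion-theoretic form of Proposition \ref{dual} recorded in the Remark after Proposition \ref{newhom} (applied to $N'$ with the choices $r'=1$ and $\Hdim_R=\pd$) to conclude $\pd_RN'=0$. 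Conversely, once $N'$ is free of rank $d$, the isomorphism $M^{\oplus d}\cong\Hom_R(N',M)$ exhibits $M$ as a direct summand of a free module, so $M$ is free; this is how I would treat $M$ uniformly in the remaining cases, where Proposition \ref{newhom}(4) gives only that $M$ is totally reflexive.

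The hard part will be the two coupled freeness extractions for the non-projective choices of $\Hdim_R$, both of which funnel into a single missing input: that $N'$ is torsionless, i.e. $\ker(N'\to N'^{**})=0$. Granting that, the torsion-refined Proposition \ref{dual} finishes the argument exactly as in the projective case, provided one also secures $\Hdim_R(N')^*<\infty$; for totally reflexive $M$ the bridge here is the adjunction $\Hom_R(N',M)\cong(N'\otimes_RM^*)^*$, which lets the freeness of $\Hom_R(N',M)$ from the second step control $(N')^*$ through the freeness-transfer results (Corollary \ref{corhom}, Corollary \ref{specseq}). The delicate and essential point is that those transfer results naturally deliver freeness of the \emph{dual} $(N')^*$, and it is precisely the hypotheses $\Hom_R(N',M)\neq0$ and $(\widetilde S_r)$ on $M$ (the latter guaranteeing total reflexivity of $M$ rather than merely $\Hdim_RM^*<\infty$) that I expect to force $N'$ to be torsionless and thereby upgrade $(N')^*$ free to $N'$ free. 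Verifying this torsionless-ness is the technical heart of the proof and the step I anticipate as the main obstacle.
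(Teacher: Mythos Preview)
Your depth computation for $\Hom_R(N',M)$ is correct and matches the paper (which cites \cite[Lemma 3.1(1)(i)]{dg} for the same bound), and the reduction to freeness of $\Hom_R(N',M)$ via Auslander--Buchsbaum is fine. The genuine gap is exactly the one you flag: you need $N'$ torsionless and cannot extract it from the hypotheses. Note that this gap is already present in your $\pd$ case, not only in the $\Gdim$/$\CIdim$ cases: the Remark after Proposition~\ref{newhom} that you invoke with $r'=1$ requires $N'$ to be $1$-torsionless, and you have not supplied this. The hope that $\Hom_R(N',M)\neq 0$ together with total reflexivity of $M$ will force $N'$ torsionless is unfounded; for instance, over a DVR with $M=R$ one may take $N'=R\oplus k$, which has $\Hom_R(N',R)\cong R\neq 0$ and satisfies the (vacuous, since $t=1$) Ext condition, yet is not torsionless. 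So your proposed endgame cannot be completed with the tools internal to this section.

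The paper's proof avoids the circular dependency between $M$ and $N'$ by not attempting to prove $N'$ free first. After obtaining $\Hdim_RM=0$ (hence $M$ totally reflexive) from Proposition~\ref{newhom}(4), it invokes \cite[Theorem 6.6]{dg}, an external freeness criterion that, given a totally reflexive $M$ and a module $N'$ with $\Hom_R(N',M)\neq 0$, $\Ext^{1\le i\le t-1}_R(N',M)=0$, and $\pd_R\Hom_R(N',M)<\infty$, concludes directly that $M$ is free. This single external input is the missing idea in your outline: rather than bootstrapping freeness of $M$ through $N'$, one establishes it in one stroke. Only after $M\cong R^m$ does the paper turn to $N'$, reading it off from the freeness of $\Hom_R(N',M)$.
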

\begin{proof}
By Proposition \ref{newhom}(4), $\Hdim_RM=0$. For any choice of $\Hdim_R$ (see Convention \ref{conv}) we obtain that $M$ is totally reflexive. Thus we apply \cite[Theorem 6.6]{dg} to conclude that $M$ is free. In particular, $\depth_R M=t$ and so \cite[Lemma 3.1(1)(i)]{dg} guarantees that $$\depth_R\Hom_R(N',M)\geq t,$$ while the Auslander-Buchsbaum formula yields that $\Hom_R(N',M)$ is free. So, write $M=R^m$ and $\Hom_R(N',M)=R^h$. It follows that $R^h=\Hom_R(N',M)\simeq(N')^m$ and thus $N'$ is also free.
\end{proof}

\begin{rem}
In virtue of \cite[Theorem 5.18]{dg}, we can suppose in Proposition \ref{pdhomfinite} the condition $$\pd_R\Hom_R(C,\Hom_R(N',M))<\infty,$$ with $C$ a semidualizing $R$-module, instead of $\pd_R\Hom_R(N',M)<\infty$.
\end{rem}

\begin{cor}\label{hdimpdfinite}
Let $R$ be a local ring of depth $t$, $M$ a finite $R$-module such that $\Hdim_RM^*<\infty, \pd_R\Hom_R(M,M)<\infty$, and $\Ext^{1\leq i\leq t-1}_R(M,M)=0$. Suppose there exists an integer $r$ with $0\leq r\leq t$ such that $M$ satisfies $(\widetilde S_r)$ and, if $r<t$, $\Ext^{1\leq i\leq t-r}_R(M,R)=0$. Then, $M$ is free.
\end{cor}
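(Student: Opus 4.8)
The plan is to recognize this statement as the special case $N=R$, $N'=M$ of Proposition \ref{pdhomfinite}, so that no genuinely new argument is needed beyond checking that the hypotheses transfer cleanly. First I would fix the choices $N=R$ and $N'=M$ and then match, one by one, the two lists of assumptions. The integer $r$ and the Serre-type condition $(\widetilde S_r)$ on $M$, together with the vanishing $\Ext^{1\leq i\leq t-r}_R(M,R)=0$ in the case $r<t$, are assumed verbatim in both statements, so these carry over without comment.

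With the choice $N=R$ we get $\pd_RN=0\leq t-r$ automatically (using $r\leq t$), and $\Hom_R(M,N)=\Hom_R(M,R)=M^*$, so the requirement $\Hdim_R\Hom_R(M,N)<\infty$ in Proposition \ref{pdhomfinite} becomes exactly the hypothesis $\Hdim_RM^*<\infty$ of the corollary (interpreting $\Hdim_R$ in the sense of Convention \ref{conv}, consistently across both statements). With $N'=M$ the condition $\Ext^{1\leq i\leq t-1}_R(N',M)=0$ reads $\Ext^{1\leq i\leq t-1}_R(M,M)=0$, which is in force, and the finiteness hypothesis $\pd_R\Hom_R(N',M)<\infty$ reads $\pd_R\Hom_R(M,M)<\infty$, also assumed. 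The remaining nondegeneracy requirements $N'\neq0$ and $\Hom_R(N',M)\neq0$ hold because, as soon as $M\neq0$, the identity endomorphism shows $\Hom_R(M,M)\neq0$; the degenerate case $M=0$ is trivially free and can be dispatched at the outset.

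Having verified every hypothesis, I would invoke Proposition \ref{pdhomfinite} directly to conclude that both $M$ and $N'=M$ are free, which is precisely the assertion. I expect no real obstacle here: the statement is essentially a clean specialization, and the only points warranting a moment's care are confirming $\pd_RR=0\leq t-r$ and that $\Hom_R(M,R)$ recovers $M^*$, both of which are immediate.
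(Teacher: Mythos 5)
Your proposal is correct and is exactly the paper's own proof: the authors simply take $N=R$ and $N'=M$ in Proposition \ref{pdhomfinite}, and your careful verification of the hypotheses (including the nondegeneracy points $M\neq 0$ and $\Hom_R(M,M)\neq 0$) fills in details the paper leaves implicit.
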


\begin{proof}
Take $N=R$ and $N'=M$ in Proposition \ref{pdhomfinite}.
\end{proof}


The case $r=0$ of Corollary \ref{hdimpdfinite} gives the following contribution to the celebrated Auslander-Reiten conjecture (see \cite{dg}, \cite{cr} and their references on the subject), whose commutative local version predicts that a finite module $M$ over a local ring $R$ must be free if $$\Ext_R^{>0}(M,M\oplus R)=0.$$

\begin{cor}\label{ARcontrib} The Auslander-Reiten conjecture is true if  $\Gdim_RM^*<\infty$ and $\pd_R\Hom_R(M,M)<\infty$. 
    
\end{cor}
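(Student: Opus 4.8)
The plan is to deduce this directly from the case $r=0$ of Corollary \ref{hdimpdfinite}, specializing the generic homological dimension $\Hdim_R$ (Convention \ref{conv}) to the Gorenstein dimension $\Gdim_R$. First I would dispose of the trivial case $M=0$, since the zero module is free; so I may assume $M\neq0$, which guarantees $\Hom_R(M,M)\neq0$ (it contains the identity endomorphism), the mild nonvanishing needed to run Corollary \ref{hdimpdfinite} through Proposition \ref{pdhomfinite}.

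Write $t=\depth R$. The Auslander--Reiten hypothesis $\Ext_R^{>0}(M,M\oplus R)=0$ unpacks into $\Ext_R^i(M,M)=0$ and $\Ext_R^i(M,R)=0$ for every $i>0$. In particular, the finite-range vanishing $\Ext_R^{1\leq i\leq t-1}(M,M)=0$ holds, and, upon choosing $r=0$, so does $\Ext_R^{1\leq i\leq t-r}(M,R)=\Ext_R^{1\leq i\leq t}(M,R)=0$ (which is only required when $0=r<t$). Moreover the Serre condition $(\widetilde{S}_r)=(\widetilde{S}_0)$ is automatic, as recorded in Subsection \ref{preliminaries}, so no depth constraint on $M$ is imposed by taking $r=0$.

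It then remains only to match the two standing hypotheses of Corollary \ref{hdimpdfinite}. Taking $\Hdim_R=\Gdim_R$, the requirement $\Hdim_RM^*<\infty$ becomes $\Gdim_RM^*<\infty$, which is assumed, while $\pd_R\Hom_R(M,M)<\infty$ is exactly the other assumption. Thus every hypothesis of Corollary \ref{hdimpdfinite} is satisfied with $r=0$, and its conclusion yields that $M$ is free, confirming the conjecture under the stated conditions.

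I do not anticipate a genuine obstacle here, as all the mathematical content is already packaged in Corollary \ref{hdimpdfinite} (and ultimately in Proposition \ref{pdhomfinite}, which passes from total reflexivity to freeness via \cite[Theorem 6.6]{dg}). The only points demanding care are bookkeeping ones: recognizing that the full vanishing $\Ext_R^{>0}(M,M\oplus R)=0$ comfortably supplies the finite-range Ext-vanishing demanded by Corollary \ref{hdimpdfinite}, and observing that the choice $r=0$ renders the condition $(\widetilde{S}_r)$ vacuous so that the corollary applies with no additional assumptions on $M$.
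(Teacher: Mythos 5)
Your proposal is correct and matches the paper's own argument: the paper derives this corollary precisely as the case $r=0$ of Corollary \ref{hdimpdfinite} (with $\Hdim_R=\Gdim_R$), exactly as you do. Your additional bookkeeping—disposing of $M=0$, noting $\Hom_R(M,M)\neq0$, and checking that $\Ext_R^{>0}(M,M\oplus R)=0$ supplies the finite-range vanishing while $(\widetilde S_0)$ is vacuous—is exactly the implicit content of the paper's one-line proof.
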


Here it should be mentioned that the conjecture holds true provided that $\Gdim_RM<\infty$ and $\pd_R\Hom_R(M,M)<\infty$ (see \cite[Corollary 6.9(2)]{dg}).

In another byproduct, we immediately retrieve the following old result of Vasconcelos.

\begin{cor}{\rm (}\cite[Theorem 3.1]{V1}{\rm )} Let $R$ be a one-dimensional Gorenstein local ring and $M$ a finite $R$-module. If $\Hom_R(M,M)$ is free, then $M$ is free. 
\end{cor}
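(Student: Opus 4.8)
The plan is to reduce the statement to a single application of Corollary \ref{hdimpdfinite}, where the homological dimension $\Hdim_R$ of Convention \ref{conv} is specialized to the Gorenstein dimension $\Gdim_R$. We may assume $M\neq 0$, as the zero module is free. Since $R$ is Gorenstein of dimension one, it is Cohen--Macaulay with $t:=\depth R=\dim R=1$, and moreover every finite $R$-module has finite Gorenstein dimension; in particular $\Gdim_R M^*<\infty$. This takes care of the hypothesis $\Hdim_R M^*<\infty$ for free.

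Next I would check the remaining numerical hypotheses of Corollary \ref{hdimpdfinite}. Because $\Hom_R(M,M)$ is free, we have $\pd_R\Hom_R(M,M)=0<\infty$. The condition $\Ext^{1\leq i\leq t-1}_R(M,M)=0$ holds automatically, since $t-1=0$ renders the index range empty. It then remains only to produce an integer $0\leq r\leq t$ for which $M$ satisfies $(\widetilde S_r)$, together with the vanishing $\Ext^{1\leq i\leq t-r}_R(M,R)=0$ in case $r<t$. I would choose $r=t=1$, so that no $\Ext$-vanishing against $R$ is required and only the Serre-type condition $(\widetilde S_1)$ must be verified.

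The heart of the argument is thus to establish $(\widetilde S_1)$, which over a one-dimensional Cohen--Macaulay local ring reduces to $\depth_R M\geq 1$ (the condition being vacuous at the minimal primes $\p$, where $\depth R_\p=0$). This is precisely where the depth-sensitivity of $\Hom$ recorded in Remark \ref{remdepth} enters. Applying that remark with $N=M$ and setting $s:=\depth_R\Hom_R(M,M)$, and observing that a nonzero free module over the Cohen--Macaulay ring $R$ has depth equal to $\depth R=1$, we land in the case $s=1$ and conclude $\depth_R M\geq 1$. Hence $M$ satisfies $(\widetilde S_1)$.

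With every hypothesis now in hand, Corollary \ref{hdimpdfinite} (applied with $r=t=1$ and $\Hdim_R=\Gdim_R$) delivers that $M$ is free, which completes the proof. The only genuinely nonformal step is the implication ``$\Hom_R(M,M)$ free $\Rightarrow\depth_R M\geq 1$''; all other hypotheses are either immediate from the Gorenstein, one-dimensional setting or collapse to an empty range of indices.
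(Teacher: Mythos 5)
Your proof is correct, and it reaches the statement by a different path through the paper's own results than the one the paper takes. The paper treats Vasconcelos' theorem as an immediate byproduct of the unlabeled corollary directly preceding it (finite $\Hdim_R M^*$, vacuous $\Ext^{1\leq i\leq t-1}_R(M,M)$, and $\Hom_R(M,M)$ free imply $M$ free), which is itself just Corollary \ref{corhom} applied with $N'=M$, $N=R$, $r=t$; along that route the freeness of $M$ ultimately comes from Corollary \ref{specseq}(2), i.e., from \cite[Corollary 3.7]{dle}. You instead invoke Corollary \ref{hdimpdfinite}, which carries an explicit Serre-condition hypothesis, so you must verify $(\widetilde S_1)$ by hand; your verification --- $\Hom_R(M,M)$ free and nonzero has depth $t=1$, hence $\depth_R M\geq 1$ by the $s=1$ case of Remark \ref{remdepth} --- is precisely the argument the paper runs inside its second proof of Corollary \ref{corhom}(1), so in effect you have unfolded one layer of the paper's reduction. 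Your route then closes via Proposition \ref{pdhomfinite} (whose freeness conclusion rests on \cite[Theorem 6.6]{dg}) rather than via Corollary \ref{specseq}(2) (which rests on \cite[Corollary 3.7]{dle}). Both routes are sound: the paper's is shorter because its chosen corollary has no $(\widetilde S_r)$ hypothesis to discharge, while yours shows that the sibling Corollary \ref{hdimpdfinite} suffices equally well, at the cost of the extra (correctly handled) depth argument and the explicit reduction to $M\neq 0$, which is in fact also needed to ensure $\Hom_R(M,M)\neq 0$ when descending to Proposition \ref{pdhomfinite}.
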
 

The theory developed in Section \ref{ringmaps} enables us to produce freeness criteria for $M$ from the vanishing of (co)homology modules of $\Hom_R(M,N)$ via ring homomorphisms. We gather them together in the following result.

\begin{cor}\label{gathered}
Let $(R,\m)\to S$ be a ring map such that $\m S\neq S$. Let $t$ be the depth of $R$ and $r$ an integer with $0\leq r\leq t$.
Let $M$ be a finite $R$-module satisfying $(\widetilde S_r)$ and, if $r<t$, $\Ext^{1\leq i\leq t-r}_R(M,R)=0$. In addition, let $N\neq 0$ be a finite $R$-module such that $\pd_RN\leq t-r$. Assume that any one of the following conditions holds true:

\begin{enumerate}[\rm(1)]
\item For each $\n \in \Max(S)$, there is an integer $h\geq 0$ {\rm (}possibly depending on $\n${\rm )} such that $\Tor^R_h(\Hom_R(M,N), S/\n)=0$. 

    \item $S$ is module-finite over $R$ and, for each $\n \in \Max(S)$, there is an integer $h\geq 0$ {\rm (}possibly depending on $\n${\rm )} such that $\Ext_R^h(\Hom_R(M,N), S/\n)=0$.

    \item $S$ is Cohen--Macaulay and  $\Tor^R_{\gg 0}(\Hom_R(M,N),X)=0$ for every maximal Cohen--Macaulay $S$-module $X$ which is locally free on ${\rm Spec}(S)\setminus {\rm Max}(S)$.

    \item $S$ is Cohen--Macaulay and  there exist integers $s,h\geq 0$ such that $\Tor^R_{s+1\leq j \leq s+h+1}(\Hom_R(M,N),X)=0$ for every maximal Cohen--Macaulay $S$-module $X$ which is locally free on ${\rm Spec}(S)\setminus {\rm Max}(S)$.

\item $S$ is Cohen--Macaulay, $S$ is module-finite over $R$, and $\Ext_R^{\gg 0}(M^*,X)=0$ for every maximal Cohen--Macaulay $S$-module $X$ which is locally free on ${\rm Spec}(S)\setminus {\rm Max}(S)$. 

      \item $S$ is Cohen--Macaulay, $S$ is module-finite over $R$, and  there exist integers $s,h\geq0$ such that $\Ext_R^{s+1\leq j\leq s+h+1}(\Hom_R(M,N),X)=0$ for every maximal Cohen--Macaulay $S$-module $X$ which is locally free on ${\rm Spec}(S)\setminus {\rm Max}(S)$. 
\end{enumerate}
    Then, $M$ is free. 
\end{cor}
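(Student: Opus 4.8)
The plan is to reduce each of the six conditions to the finiteness of a single homological dimension---namely $\pd_R\Hom_R(M,N)$ in cases (1)--(4) and (6), or $\pd_R M^*$ in case (5)---and then feed this into the freeness criterion of Proposition \ref{dual}. Essentially all of the analytic work is already packaged in Section \ref{ringmaps} together with Proposition \ref{newhom}, so the argument amounts to matching each hypothesis to the correct earlier statement and then chaining them.

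First I would apply the finiteness criteria to the module $\Hom_R(M,N)$. Conditions (1) and (2) are precisely the hypotheses of Proposition \ref{finitecriteria}(1) and (2), respectively, read with $\Hom_R(M,N)$ in the role of the test module; each therefore yields $\pd_R\Hom_R(M,N)<\infty$. In the same spirit, conditions (3), (4) and (6) are exactly the hypotheses of Theorem \ref{thm1}(1), (2) and (4) for $\Hom_R(M,N)$ (using that $S$ is Cohen--Macaulay, and module-finite where required), again producing $\pd_R\Hom_R(M,N)<\infty$. Condition (5) is instead the hypothesis of Theorem \ref{thm1}(3) applied to $M^*$, and so it yields $\pd_R M^*<\infty$ directly.

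Next I would convert everything into a statement about $M^*$. Because $\pd_R N\leq t-r<\infty$, Proposition \ref{newhom}(3), read with $\Hdim_R=\pd_R$, gives the equality $\pd_R\Hom_R(M,N)=\pd_R M^*+\pd_R N$; hence in cases (1)--(4) and (6) the finiteness of $\pd_R\Hom_R(M,N)$ forces $\pd_R M^*<\infty$, while in case (5) this is already in hand. Thus $\pd_R M^*<\infty$ in every case. Finally, the standing hypotheses ensure that $M$ satisfies $(\widetilde S_r)$ and, when $r<t$, that $\Ext_R^{1\le i\le t-r}(M,R)=0$, so Proposition \ref{dual} (again with $\Hdim_R=\pd_R$) applies and delivers $\pd_R M=0$, i.e., $M$ is free. (Alternatively, for the $\Hom_R(M,N)$ cases one may skip the conversion and invoke Proposition \ref{newhom}(4) directly, which is why that clause was stated.)

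Since this is a clean corollary of the earlier machinery, I do not expect a genuine obstacle; the only point demanding care is the bookkeeping---verifying that each numbered condition lines up with exactly the right clause of Proposition \ref{finitecriteria} or Theorem \ref{thm1}, and remembering that condition (5) is phrased in terms of $M^*$ rather than $\Hom_R(M,N)$, which is precisely why it bypasses the conversion step. One should also confirm that the hypotheses on $M$, $r$, and $N$ assumed here coincide with those required by Proposition \ref{newhom} and Proposition \ref{dual}, which they do by design.
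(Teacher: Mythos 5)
Your proposal is correct and follows essentially the same route as the paper: reduce each condition to finiteness of $\pd_R\Hom_R(M,N)$ (or $\pd_RM^*$) via Proposition \ref{finitecriteria}(1),(2) and Theorem \ref{thm1}, then conclude freeness via Proposition \ref{newhom}(4) (equivalently, via \ref{newhom}(3) plus Proposition \ref{dual}, as you do). Your explicit observation that item (5) is phrased in terms of $M^*$ and thus goes through Theorem \ref{thm1}(3) applied to $M^*$ is in fact slightly more careful than the paper's one-line proof, which lumps items (3)--(6) together.
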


\begin{proof}
Apply Proposition \ref{finitecriteria}(1),(2) for items (1) and (2) respectively, and Theorem \ref{thm1} for items (3)-(6) in order to conclude that $\pd_R\Hom_R(M,N)<\infty$. Finally, we apply Proposition \ref{newhom}(4).
\end{proof}

It is worth recording the following special situation. 


\begin{cor}\label{cor1} Let $R$ be a $d$-dimensional Cohen-Macaulay local ring and $M$ a  maximal Cohen-Macaulay $R$-module. Let $N\neq0$ be a finite $R$-module and $r$ an integer with $0\leq r \leq d$ such that $\pd_RN\leq d-r$ and, if $r<d$, $\Ext^{1\leq i\leq d -r}_R(M,R)=0$. If $\Hdim_R\Hom_R(M,N)<\infty$, then $\Hdim_RM=0$.






 \end{cor}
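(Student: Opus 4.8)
The plan is to specialize the hypotheses of Corollary~\ref{corhom} (or equivalently Proposition~\ref{newhom}(4)) to the Cohen--Macaulay setting and check that the Serre-type condition $(\widetilde S_r)$ holds automatically here, so that no such assumption needs to be imposed. Since $R$ is Cohen--Macaulay local of dimension $d$, we have $\depth R=d=t$, so the role of the depth parameter $t$ from Proposition~\ref{newhom} is played by $d$. Because $M$ is maximal Cohen--Macaulay, $\depth_{R_\p}M_\p=\dim R_\p$ for every $\p\in\supp_R(M)$ (as $R$ is Cohen--Macaulay and $M$ is MCM, localization preserves the MCM property), and in particular $\depth_{R_\p}M_\p\geq\min\{r,\depth R_\p\}$ trivially for each such prime and each $r$. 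Hence $M$ satisfies $(\widetilde S_r)$ for \emph{every} $r$ with $0\le r\le d$, and this verification is the one genuinely new observation the proof requires.

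\smallskip

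First I would record that $\depth R=d$, so I can invoke Proposition~\ref{newhom} with $t=d$. Next I would verify the $(\widetilde S_r)$ condition for $M$ as sketched above: for $\p\in\supp_R(M)$ one has $\depth_{R_\p}M_\p=\dim R_\p=\depth R_\p$ since both $R_\p$ and $M_\p$ are Cohen--Macaulay of the same dimension, and for $\p\notin\supp_R(M)$ the condition is vacuous because $M_\p=0$ has depth $\infty$. In either case $\depth_{R_\p}M_\p\ge\min\{r,\depth R_\p\}$, so $(\widetilde S_r)$ holds. With this in hand, the remaining hypotheses of Proposition~\ref{newhom} are exactly those stated in the corollary: the vanishing $\Ext^{1\le i\le d-r}_R(M,R)=0$ when $r<d$, the existence of $N\ne 0$ with $\pd_R N\le d-r=t-r$, and the finiteness $\Hdim_R\Hom_R(M,N)<\infty$.

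\smallskip

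Finally I would apply Proposition~\ref{newhom}(4) directly to conclude $\Hdim_RM=0$, completing the proof. The main (indeed only) point of substance is the automatic validity of $(\widetilde S_r)$ in the Cohen--Macaulay maximal Cohen--Macaulay situation; everything else is a transcription of the general statement. There is no serious obstacle, since the corollary is essentially a clean restatement of Proposition~\ref{newhom}(4) under hypotheses designed to make the Serre condition free. One minor point worth stating carefully is that the hypothesis on $N$ uses $t-r=d-r$, so the matching of parameters between the general proposition and this Cohen--Macaulay specialization is immediate once $t=d$ is noted.
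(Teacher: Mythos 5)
Your proposal is correct and follows exactly the paper's own route: the paper's proof likewise observes that over a Cohen--Macaulay local ring any maximal Cohen--Macaulay module satisfies $(\widetilde S_n)$ for all $n\geq 0$ (which is your localization argument, a standard fact) and then invokes Proposition \ref{newhom}(4) with $t=d$. No differences of substance.
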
 
 
\begin{proof} Over a Cohen-Macaulay local ring, any maximal Cohen-Macaulay module satisfies $(\widetilde S_n)$  for all $n\geq 0$. Now the result follows from Proposition \ref{newhom}(4). 

\end{proof}

In the Gorenstein case, Corollary \ref{cor1} gives the following result.

\begin{cor}\label{corGor} Let $R$ be a Gorenstein local ring and $M$ a  maximal Cohen-Macaulay $R$-module. If $\Hdim_R\Hom_R(M,N)<\infty$ {\rm (}which is automatic in the $\Gdim_R$ case{\rm )}, where $N\neq0$ is a finite $R$-module with $\pd_RN < \infty$, then $\Hdim_RM=0$. 
\end{cor}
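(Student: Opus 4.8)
The plan is to derive Corollary \ref{corGor} as a direct specialization of Corollary \ref{cor1} to the Gorenstein setting, where the hypotheses simplify dramatically. First I would observe that a Gorenstein local ring is in particular Cohen--Macaulay, so if we set $d=\dim R$ we are precisely in the scenario of Corollary \ref{cor1}, and the maximal Cohen--Macaulay module $M$ plays the same role. The task is thus to exhibit an integer $r$ with $0\le r\le d$ satisfying the two requirements of Corollary \ref{cor1}: namely $\pd_R N\le d-r$ and, when $r<d$, the vanishing $\Ext^{1\le i\le d-r}_R(M,R)=0$.

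The key simplification is the choice $r=d$ (equivalently $r=t$ in the notation of the underlying Proposition \ref{newhom}), which eliminates the awkward $\Ext$-vanishing side condition entirely, since that condition is only imposed in the range $r<d$. With $r=d$ the constraint on $N$ becomes $\pd_R N\le d-d=0$, i.e.\ $N$ free; but I do not want to restrict to free $N$. Instead I would exploit the Gorenstein hypothesis directly: over a Gorenstein local ring, a maximal Cohen--Macaulay module $M$ is in fact totally reflexive (it has $\Gdim_R M=0$), so $\Ext^i_R(M,R)=0$ for all $i\ge 1$. Consequently the condition $\Ext^{1\le i\le d-r}_R(M,R)=0$ holds automatically for every $r$ with $0\le r\le d$, and I may simply take $r=0$. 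Then $\pd_R N\le d-r=d$, and since $\pd_R N<\infty$ over a local ring of dimension $d$ forces $\pd_R N\le d$ by the Auslander--Buchsbaum formula (indeed $\pd_R N=\depth R-\depth_R N\le\dim R=d$), the hypothesis $\pd_R N<\infty$ supplies exactly the needed bound. Thus all hypotheses of Corollary \ref{cor1} are met, and it yields $\Hdim_R M=0$.

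Finally I would address the parenthetical remark that finiteness of $\Hdim_R\Hom_R(M,N)$ is automatic in the $\Gdim_R$ case. Here I would invoke Proposition \ref{newhom}(3), which gives the identity $\Gdim_R\Hom_R(M,N)=\Gdim_R M^*+\pd_R N$; since $M$ is totally reflexive over the Gorenstein ring $R$, so is its dual $M^*$, whence $\Gdim_R M^*=0$, and with $\pd_R N<\infty$ we conclude $\Gdim_R\Hom_R(M,N)=\pd_R N<\infty$ with no further assumption. This justifies the parenthetical claim.

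I expect the only real subtlety to be bookkeeping about which $r$ makes the side conditions vacuous while still accommodating an arbitrary $N$ of finite projective dimension; the choice $r=0$ combined with total reflexivity of $M$ (the defining feature of the Gorenstein case, via $\Gdim_R M=0$ for maximal Cohen--Macaulay $M$) resolves this cleanly, and everything else is a routine invocation of Corollary \ref{cor1} and Proposition \ref{newhom}(3). No genuinely hard step is anticipated.
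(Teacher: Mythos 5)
Your proof is correct and follows essentially the same route as the paper, which derives Corollary \ref{corGor} precisely as a specialization of Corollary \ref{cor1}; your choice $r=0$, justified by $\Ext^{\ge 1}_R(M,R)=0$ (total reflexivity of maximal Cohen--Macaulay modules over a Gorenstein ring) together with the Auslander--Buchsbaum bound $\pd_R N\le d$, fills in exactly the details the paper leaves implicit. For the parenthetical claim, your appeal to Proposition \ref{newhom}(3) works, though it can be seen even more directly from the standard fact that every finite module over a Gorenstein local ring has finite Gorenstein dimension.
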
 


We shall return to the  Gorenstein property (as a target) in Section \ref{G}.


\begin{cor}\label{first1} Let $(R, \m)$ be a local ring of depth $t$, $M$ a finite $R$-module, and $r$ an integer with $0\leq r\leq t$ such that $M$ satisfies $(\widetilde S_r)$ {\rm (}e.g., take $r=t$ if $R$ is Cohen-Macaulay and $M$ is maximal Cohen-Macaulay{\rm )} and, if $r<t$,  $\Ext_R^{1\le i\le t-r}(M,R)=0$. In addition, suppose there is a ring map $R\to S$, where $S$ is regular and $\m S\neq S$, and a finite $R$-module $N\neq 0$ 
such that $\pd_RN\leq t-r$ and $${\rm Tor}^R_i(\Hom_R(M,N), S)=0 \quad \mbox{for\, all} \quad i\gg 0.$$ Then, $M$ is free.



\end{cor}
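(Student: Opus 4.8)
The plan is to combine the projective-dimension finiteness criterion of Theorem \ref{thm1} with the freeness criterion of Proposition \ref{newhom}(4), exactly in the spirit of Corollary \ref{gathered}. The single extra input needed is the simplification of Theorem \ref{thm1} afforded by the regularity of $S$, recorded in Remark \ref{regularhyp}.

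First I would use the hypothesis that $S$ is regular to reduce the Tor-vanishing condition to the module $S$ itself. Since $S$ is regular, every maximal Cohen--Macaulay $S$-module $X$ is projective: localizing at a maximal ideal $\n$ of $S$, the module $X_\n$ is maximal Cohen--Macaulay over the regular local ring $S_\n$, hence free by the Auslander--Buchsbaum formula, so $X$ is locally free at every maximal ideal and therefore a direct summand of a free $S$-module. Consequently $\Tor^R_i(\Hom_R(M,N), X)$ is a direct summand of a (possibly infinite) direct sum of copies of $\Tor^R_i(\Hom_R(M,N), S)$, so the assumed vanishing $\Tor^R_i(\Hom_R(M,N), S)=0$ for $i\gg 0$ forces $\Tor^R_i(\Hom_R(M,N), X)=0$ for $i\gg 0$ for every such $X$. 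This is precisely the content of Remark \ref{regularhyp} in the setting of item (1).

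With this reduction in hand, hypothesis (1) of Theorem \ref{thm1} is satisfied with the role of $M$ there played by the finite $R$-module $\Hom_R(M,N)$ (note $S$ regular is in particular Cohen--Macaulay, and $\m S\neq S$ holds by assumption). Theorem \ref{thm1}(1) therefore yields $\pd_R\Hom_R(M,N)<\infty$; equivalently, $\Hdim_R\Hom_R(M,N)<\infty$ in the $\pd_R$ instance of Convention \ref{conv}.

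Finally I would invoke Proposition \ref{newhom}(4). All of its hypotheses are in force: $R$ is local of depth $t$, we have $0\le r\le t$, the module $M$ satisfies $(\widetilde S_r)$, the vanishing $\Ext_R^{1\le i\le t-r}(M,R)=0$ holds whenever $r<t$, and $N\neq 0$ satisfies $\pd_RN\le t-r$. Since we have just established $\Hdim_R\Hom_R(M,N)<\infty$, Proposition \ref{newhom}(4) gives $\Hdim_RM=0$; reading this in the $\pd_R$ case yields $\pd_RM=0$, i.e., $M$ is free, as desired. I do not expect any genuine obstacle here: the statement is a direct assembly of Theorem \ref{thm1}(1) and Proposition \ref{newhom}(4), the only subtlety being the routine passage from arbitrary maximal Cohen--Macaulay test modules to $S$ itself, which is legitimate precisely because $S$ is regular.
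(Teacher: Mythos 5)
Your proof is correct and is essentially the paper's own argument: the paper likewise uses the regular-case simplification of Theorem~\ref{thm1}(1) recorded in Remark~\ref{regularhyp}(i) to conclude $\pd_R\Hom_R(M,N)<\infty$, and then applies Proposition~\ref{newhom}(4) in the $\pd_R$ instance to get that $M$ is free. Your explicit justification that maximal Cohen--Macaulay modules over the regular ring $S$ are projective, so that the test modules $X$ may be replaced by $S$ itself, is precisely the step the paper's remark treats as ``clear.''
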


\begin{proof} By Remark \ref{regularhyp}(i), we must have $\pd_R\Hom_R(M,N)<\infty$. Now we apply Proposition \ref{newhom}(4).
\end{proof}




Next, we consider the class of Golod local rings that are not hypersurface rings. This includes, for instance, Cohen-Macaulay non-Gorenstein local rings with minimal multiplicity, such as for example the ring $k[\![x, y, z]\!]/(y^2-xz, x^2y-z^2, x^3-yz)$, where $x, y, z$ are formal indeterminates over a field $k$.

\begin{cor}\label{cor-Golod} Let $R$ be a Golod local ring of depth $t$ which is not a hypersurface ring, $M$ a finite $R$-module, and $r$ an integer with $0\leq r\leq t$ such that $M$ satisfies $(\widetilde S_r)$ and, if $r<t$, $\Ext^{1\leq i\leq t-r}_R(M,R)=0$. If there exists a finite $R$-module $N\neq 0$ satisfying $\pd_RN\leq t-r$ and $\Gdim_R\Hom_R(M,N)<\infty$, then $M$ is free. 
\end{cor}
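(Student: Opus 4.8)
The plan is to reduce to total reflexivity of $M$ using the machinery already in place, and then exploit the special behaviour of totally reflexive modules over Golod rings that are not hypersurfaces.

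First I would observe that the hypotheses here are exactly those of Proposition \ref{newhom}(4) specialized to $\Hdim_R=\Gdim_R$: the module $M$ satisfies $(\widetilde S_r)$, one has $\Ext^{1\leq i\leq t-r}_R(M,R)=0$ when $r<t$, and there is a nonzero finite module $N$ with $\pd_RN\leq t-r$ and $\Gdim_R\Hom_R(M,N)<\infty$. Applying that proposition immediately yields $\Gdim_RM=0$, i.e.\ $M$ is totally reflexive. This disposes of the homological bookkeeping and isolates the single remaining issue.

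It then remains to upgrade total reflexivity to freeness, and this is precisely where the Golod and non-hypersurface hypotheses enter. The key input is the structural fact that a Golod local ring carries a nonfree totally reflexive module only when it is a hypersurface; equivalently, over a non-hypersurface Golod ring every finite module of finite Gorenstein dimension has finite projective dimension. Since $\Gdim_RM=0<\infty$, this forces $\pd_RM<\infty$; combined with the Auslander--Bridger comparison $\Gdim_RM\leq\pd_RM$, which is an equality whenever the projective dimension is finite, we conclude $\pd_RM=\Gdim_RM=0$, so $M$ is free, as desired.

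The only genuine subtlety, and the one step where the exclusion of hypersurfaces is indispensable, is the invocation of this structural theorem on totally reflexive modules over Golod rings: hypersurfaces are exactly the Golod complete intersections, and they do admit nonfree totally reflexive (indeed maximal Cohen--Macaulay) modules, so the hypothesis that $R$ is not a hypersurface cannot be weakened. Everything else in the argument is a direct appeal to Proposition \ref{newhom}(4) together with the standard inequality between Gorenstein and projective dimension.
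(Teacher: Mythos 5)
Your proposal is correct and follows essentially the same route as the paper's proof: apply Proposition \ref{newhom}(4) to conclude $\Gdim_RM=0$, then invoke the structural fact (due to Avramov--Martsinkovsky) that a Golod local ring which is not a hypersurface admits no nonfree totally reflexive modules. Your extra step via the Auslander--Bridger equality $\pd_RM=\Gdim_RM$ when $\pd_RM<\infty$ is just an equivalent repackaging of that same fact, so there is no substantive difference.
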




 

\begin{proof} By Proposition \ref{newhom}(4), $M$ must be totally reflexive. But a local ring $R$ as in the statement is known to have the property that every totally reflexive $R$-module is necessarily free (see \cite[Examples 3.5(2)]{AM}). In particular, $M$ is free.
\end{proof}

To close the subsection, let $R$ be a local ring of prime characteristic $p>0$ and let ${\rm F}\colon R\to R$ be the Frobenius map ${\rm F}(a)=a^p$, for all $a\in R$. For a positive integer $e$, we can consider the $e$-th iteration of ${\rm F}$, i.e., the assignment $${\rm F}^e\colon a\mapsto a^{p^e},$$ which defines on $R$ a new $R$-module structure. Such a module is denoted $R^{(e)}$, which is known to play an important role in the theory; e.g., a classical result of Kunz states that $R^{(1)}$ is $R$-flat if and only if $R$ is regular. Also, recall that $R$ is ${\rm F}$-{\it finite} if ${\rm F}^e$ is a finite map, which means $R^{(e)}$  is a finite $R$-module, for some (or equivalently, for every) $e\geq 1$. This property holds for fundamental classes
of rings, for instance, when $R$ is a complete local ring with perfect residue field or a
localization of an affine algebra over a perfect field (see, e.g., \cite[p. 398]{BH93}).


\begin{cor}\label{p} Let $R$ be an {\rm F}-finite local ring of prime characteristic $p>0$ and depth $t$, $M$ a finite $R$-module,  and $r$ an integer with $0\leq r\leq t$ such that $M$ satisfies $(\widetilde S_r)$ and, if $r<t$, $\Ext^{1\leq i\leq t-r}_R(M,R)=0$. Let $N\neq0$ be a finite $R$-module such that $\pd_RN\leq t-r$. Suppose any one of the following assertions:

\begin{enumerate}[\rm(1)]

\item ${\rm Ext}_R^i(\Hom_R(M,N), R^{(e)})=0$ for all $i=\ell, \ldots, \ell + t$ {\rm (}for some integer $\ell \geq 1${\rm )} and infinitely many $e$.

\item ${\rm Tor}^R_j(\Hom_R(M,N), R^{(e)})=0$  for all $j\gg 0$ and $p^e\gg 0$.
\end{enumerate}
Then, $M$ is free.
\end{cor}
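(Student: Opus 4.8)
The plan is to reduce both cases to showing $\pd_R\Hom_R(M,N)<\infty$, after which Proposition \ref{newhom}(4) immediately forces $\Hdim_RM=0$ for any choice of $\Hdim_R$, and in particular (taking, say, $\Gdim_R$ or $\pd_R$ as convenient) yields that $M$ is free. So the entire content is to verify the finiteness of projective dimension of the module $\Hom_R(M,N)$ from the stated Frobenius-(co)homology vanishing, and then invoke Proposition \ref{newhom}(4). Note that the hypotheses on $M$, $N$, $r$, and $t$ match exactly those of Proposition \ref{newhom}, so no extra bookkeeping is needed for that final step.

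For item (2), I would observe that the Frobenius power $R^{(e)}$ is a finite $R$-module (since $R$ is $\mathrm{F}$-finite), and apply Proposition \ref{finitecriteria}(1) with the ring map $\mathrm{F}^e\colon R\to R$, i.e. $S=R^{(e)}$. One must check the standing hypothesis $\m S\neq S$: this holds because $\mathrm{F}^e$ is a finite map and $(R,\m)$ is local, so Nakayama's lemma applies (this is exactly Example \ref{instances}(1)). The maximal ideals of $S=R^{(e)}$ correspond to $\m$, and the vanishing $\Tor^R_j(\Hom_R(M,N),R^{(e)})=0$ for $j\gg 0$ (for a single sufficiently large $e$, namely one with $p^e\gg 0$) gives, for each $\n\in\Max(S)$, an integer $h$ with $\Tor^R_h(\Hom_R(M,N),S/\n)=0$; thus Proposition \ref{finitecriteria}(1) delivers $\pd_R\Hom_R(M,N)<\infty$.

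For item (1), the natural route is Proposition \ref{finitecriteria}(2) with the same module-finite map $S=R^{(e)}$: here one needs, for each $\n\in\Max(S)$, some $h$ with $\Ext^h_R(\Hom_R(M,N),S/\n)=0$, which the hypothesis $\Ext^i_R(\Hom_R(M,N),R^{(e)})=0$ for $i=\ell,\dots,\ell+t$ should supply. The subtlety—and what I expect to be the main obstacle—is that the vanishing is stated against $R^{(e)}$ itself rather than against the residue field $S/\n$; passing from $\Ext$-vanishing against $R^{(e)}$ to $\Ext$-vanishing against $S/\n$ is not formal, since $S/\n$ is a module-finite quotient and one must control a range of consecutive $\Ext$ groups. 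This is presumably why the hypothesis provides a block of $t+1$ consecutive vanishing degrees and asks for \emph{infinitely many} $e$: the idea is to use the change-of-rings/Frobenius machinery together with the fact that $R^{(e)}$ resolves (up to finite pieces built from $S/\n$, governed by $\depth=t$) to convert the $\Ext^{\ell\le i\le\ell+t}(-,R^{(e)})$ vanishing into $\Ext^h_R(-,S/\n)=0$ for a single $h$.

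Concretely, I would filter $R^{(e)}$ or its relevant syzygy by the residue field using a depth-$t$ argument (an exact sequence realizing $S/\n$ as built from $R^{(e)}$ in at most $t$ steps), so that a length-$(t+1)$ window of simultaneous $\Ext$-vanishing against $R^{(e)}$ collapses—via a d\'ecalage/Lemma \ref{11}-type shift—to vanishing of a single $\Ext^h_R(\Hom_R(M,N),S/\n)$; the role of ``infinitely many $e$'' is to guarantee that for at least one (indeed infinitely many) $e$ the map $\mathrm{F}^e$ interacts with the resolution of $\Hom_R(M,N)$ in a way that makes the chosen window land in the finite-dimensional regime. Having produced the required single-degree vanishing against $S/\n$ for every $\n\in\Max(S)$, Proposition \ref{finitecriteria}(2) gives $\pd_R\Hom_R(M,N)<\infty$, and again Proposition \ref{newhom}(4) finishes the argument. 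Thus the essential difficulty is entirely in case (1): translating the block of $\Ext$-vanishing against the Frobenius module into the residue-field vanishing required to trigger Proposition \ref{finitecriteria}(2).
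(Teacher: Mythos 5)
Your overall skeleton agrees with the paper: both arguments reduce each case to establishing $\pd_R\Hom_R(M,N)<\infty$ and then invoke Proposition \ref{newhom}(4) (with $\Hdim_R=\pd_R$; note that choosing $\Gdim_R$ there would only give that $M$ is totally reflexive, so the projective-dimension choice is the one you need). The gap lies entirely in how you obtain $\pd_R\Hom_R(M,N)<\infty$. The paper does not derive this from Proposition \ref{finitecriteria}; it quotes two external theorems, namely \cite[Corollary 2.4]{N} for item (1) and \cite[Theorem 4.5(1)]{TY} for item (2), and these cannot be recovered by the formal arguments you propose.

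Concretely, for item (2): Proposition \ref{finitecriteria}(1), applied to $S=R^{(e)}$ via $\mathrm{F}^e$, requires $\Tor^R_h(\Hom_R(M,N),S/\n)=0$ for the residue field $S/\n$ of $S$, whereas the hypothesis supplies $\Tor^R_j(\Hom_R(M,N),R^{(e)})=0$, i.e.\ vanishing against $S$ itself. Vanishing of Tor against a module does not descend to its quotients, so your step ``the hypothesis gives, for each $\n\in\Max(S)$, an integer $h$ with $\Tor^R_h(\Hom_R(M,N),S/\n)=0$'' is a non sequitur. Worse, since $R$ is $\mathrm{F}$-finite, $S/\n\cong k^{(e)}$ is, as an $R$-module, a finite direct sum of copies of $k=R/\m$, so the vanishing Proposition \ref{finitecriteria}(1) asks for is literally $\Tor^R_h(\Hom_R(M,N),k)=0$, which is essentially the conclusion $\pd_R\Hom_R(M,N)<\infty$ itself; your reduction is in effect circular. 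If the passage from Tor-vanishing against $R^{(e)}$ to Tor-vanishing against $k$ were formal, the classical Peskine--Szpiro/Herzog theorem that Frobenius detects finite projective dimension would be trivial, and it is not. For item (1) you acknowledge the problem, but the proposed fix --- an exact sequence ``realizing $S/\n$ as built from $R^{(e)}$ in at most $t$ steps'' --- does not exist: there is no finite resolution or filtration of the residue field by Frobenius pushforwards, so Lemma \ref{11}/d\'ecalage has nothing to act on. The window of $t+1$ consecutive vanishing degrees and the requirement of infinitely many $e$ are precisely the hypotheses of \cite[Corollary 2.4]{N}, whose proof rests on Frobenius-specific properties of minimal free resolutions (the entries of the differentials of the Frobenius pullback of a minimal resolution lie in the Frobenius power $\m^{[p^e]}$, and one needs rigidity results exploiting this), not on change-of-rings formalities. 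So both cases require the cited results as black boxes; once those are in place, your final step via Proposition \ref{newhom}(4) coincides with the paper's.
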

\begin{proof} Assume that (1) (resp.\,(2)) holds. Then, 
by virtue of \cite[Corollary 2.4]{N} (resp.\,\cite[Theorem 4.5(1)]{TY}), we deduce that $\pd_R\Hom_R(M,N)<\infty$. Now Proposition \ref{newhom}(4) finishes the proof.
\end{proof}

More on the positive characteristic case will be provided in Subsection \ref{char=p}.

\section{The Gorenstein property, and more on freeness}\label{G}

This section is mainly concerned with criteria for the Gorensteiness of Cohen-Macaulay local rings admitting a canonical module, and for the freeness of finite modules over such rings. We will also present criteria for regular and complete intersection local rings.

\subsection{A general criterion} Our first result in this part makes use of an iteration between the algebraic and the canonical duals of a given module, and requires the resulting module to have finite projective dimension.

\begin{prop}\label{Gor-crit-new}
Let $R$ be a $d$-dimensional Cohen-Macaulay local ring  possessing a canonical module. Let $M$ be a finite $R$-module such that $M^*\neq 0$ and $\pd_R(M^*)^{\dagger}<\infty$. The following assertions hold true:

\begin{enumerate}[\rm(1)]

\item If $d\leq 1$, then $M^{**}$ is free.

\item If $\Ext^j_R(M,R)=0$ for all  $j=1,\ldots,d$ when $d\geq 1$, 
or for all $j>0$ when $d=0$, then $M$ is free.
\end{enumerate} In either case, $R$ is Gorenstein.
\end{prop}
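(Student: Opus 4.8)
The plan is to reduce everything to the single assertion that $R$ is Gorenstein; once that is in hand, $\omega_R\cong R$ and both freeness statements follow at once (for (2) with one further use of Proposition \ref{dual}). The driving observation is that finiteness of $\pd_R(M^*)^{\dagger}$ forces $M^*$ to be a direct sum of copies of $\omega_R$. The first task is to check that $M^*=\Hom_R(M,R)$ is maximal Cohen--Macaulay. It is torsionless (evaluation against a generating set embeds it in a free module), so its associated primes lie among the minimal primes of $R$; as $R$ is equidimensional this gives $\dim M^*=d$ and hence $(M^*)^{\dagger}\neq0$. In case (1), where $d\le1$, a nonzero torsionless module automatically has depth $\ge d$, so $M^*$ is maximal Cohen--Macaulay. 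In case (2), the vanishing $\Ext_R^{1\le j\le d}(M,R)=0$ makes the $R$-dual of a free resolution of $M$ exact through homological degree $d$, exhibiting $M^*$ as a $(d{+}1)$-st syzygy; such a module satisfies $(\widetilde S_{d})$ and is therefore maximal Cohen--Macaulay as well.

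Since the canonical dual of a maximal Cohen--Macaulay module is again maximal Cohen--Macaulay, $(M^*)^{\dagger}$ is maximal Cohen--Macaulay, and being of finite projective dimension it is free by the Auslander--Buchsbaum formula, say $(M^*)^{\dagger}\cong R^{k}$ with $k\ge1$. Dualizing back with $\Hom_R(-,\omega_R)$ and invoking canonical biduality for maximal Cohen--Macaulay modules then yields $M^*\cong\omega_R^{\,k}$. In particular, being a dual module, $M^*$ is reflexive, and therefore $\omega_R$ itself is reflexive.

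The decisive and, I expect, hardest step is to promote ``$\omega_R$ is reflexive'' to ``$\omega_R$ is free'', that is, to Gorensteinness. The hypotheses are genuinely needed here, since over a normal non-Gorenstein ring of dimension $\ge2$ the canonical module is reflexive while $R$ is not Gorenstein, so no purely formal argument can close the gap. When $d=0$ the step is clean: $\omega_R$ is injective (the injective hull of the residue field), and an injective submodule of the free module into which $M^*\cong\omega_R^{\,k}$ embeds splits off, so $\omega_R^{\,k}$ is free and $\omega_R\cong R$. For $d=1$ I would first localize at the minimal primes (reducing there to the $d=0$ case) to see that $R$ is generically Gorenstein, so $\omega_R$ is a reflexive grade-one ideal; then I would use $\Hom_R(\omega_R,\omega_R)=R$ together with reflexivity to force the trace ideal $\omega_R\cdot\Hom_R(\omega_R,R)$ to be the unit ideal, i.e. $\omega_R$ to have a free summand, hence $\omega_R\cong R$. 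This is exactly where dimension one is spent. In case (2), the same $\Ext$-vanishing promotes $\omega_R$ from a second syzygy to a $(d{+}1)$-st syzygy, and this one extra syzygy beyond $\dim R$ is what substitutes for the dimension bound in forcing $\omega_R$ free, via the known homological characterizations of Gorensteinness through the canonical module.

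Finally, with $R$ Gorenstein we have $\omega_R\cong R$, so $M^*\cong R^{k}$ is free and hence $M^{**}\cong R^{k}$ is free, which gives (1). For (2), freeness of $M^*$ means $\pd_R M^*=0<\infty$, so Proposition \ref{dual} applied with $r=0$ (using $\Ext_R^{1\le i\le d}(M,R)=0$) yields $\Hdim_R M=0$, i.e. $M$ is free. The assertion that $R$ is Gorenstein in both cases is precisely the crux established above; it can also be read off at the end from $\pd_R(M^*)^{\dagger}=\pd_R\omega_R^{\,\mathrm{rk}}<\infty$.
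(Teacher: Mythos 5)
Your opening reduction is correct and coincides with the paper's: $M^*$ is maximal Cohen--Macaulay (your syzygy argument from the $\Ext$-vanishing in case (2) is a valid way to see this), so $(M^*)^{\dagger}$ is maximal Cohen--Macaulay of finite projective dimension, hence free, and canonical biduality gives $M^*\cong\omega_R^{\oplus k}$. The genuine gap comes immediately after, in the sentence ``being a dual module, $M^*$ is reflexive.'' That principle is false: duals are torsionless (and direct summands of their triple duals), but not reflexive in general. For instance, over $R=k[x,y]/(x,y)^2$ one has $(R/\mathfrak{m})^*\cong k^2$, whose bidual is $k^8$. Worse, in your $d=1$ argument this is not a repairable slip but the entire content of the theorem: for a one-dimensional generically Gorenstein Cohen--Macaulay local ring, the same fractional-ideal duality $(\omega_R:(\omega_R:J))=J$ that you invoke shows that reflexivity of $\omega_R$ is \emph{equivalent} to the trace ideal $\omega_R\cdot\Hom_R(\omega_R,R)$ being the unit ideal, i.e.\ to $R$ being Gorenstein. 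So at the decisive step you are assuming the conclusion. Case (2) has a second unproved step: you appeal to unnamed ``known homological characterizations of Gorensteinness through the canonical module'' for the implication that $\omega_R$ being (a summand of) a $(d+1)$-st syzygy forces $R$ Gorenstein; that implication is true, but it is exactly what must be argued, and you do not argue it.

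Both gaps are closed by a single splitting argument, which is what the paper does. From $M^*\cong\omega_R^{\oplus k}$ and the exact sequence $0\to M^*\to F_0^*\to F_1^*\to\cdots$ obtained by dualizing a free resolution of $M$, one gets a short exact sequence $0\to\omega_R^{\oplus k}\to F_0^*\to Y\to 0$ in which $Y$ is a first syzygy (a submodule of $F_1^*$), hence maximal Cohen--Macaulay when $d\le 1$; in case (2), by your own observation $Y$ is a $d$-th syzygy, hence maximal Cohen--Macaulay for any $d$. Since $\Ext^1_R(Y,\omega_R)=0$ for maximal Cohen--Macaulay $Y$, the sequence splits, so $\omega_R$ is a direct summand of a free module, hence free, and $R$ is Gorenstein; your concluding paragraph (freeness of $M^{**}$, and Proposition \ref{dual} with $r=0$ for the freeness of $M$ in (2)) then goes through. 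For the record, the paper proves (2) by a different route, quoting \cite[Theorem 2.1(i)]{cr} to identify $(M^*)^{\dagger}\cong M\otimes_R\omega_R$ as a maximal Cohen--Macaulay module, so that its freeness forces both $M$ and $\omega_R$ to be free; but the splitting argument above is uniform across both cases and is the one idea your outline is missing.
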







\begin{proof} (1) Note $\depth_RM^*\ge \min\{2,\depth\,R\}=\min\{2, d\}=d$. Thus, the $R$-module $M^*$ is maximal Cohen-Macaulay, and hence so is its canonical dual $(M^*)^{\dagger}\neq 0$. As $\pd_R(M^*)^{\dagger}<\infty$, we obtain $(M^*)^{\dagger}\cong R^{\oplus b}$ for some integer $b\geq 1$. Being $M^*$ maximal Cohen-Macaulay, we deduce that $$M^*\cong (M^*)^{\dagger \dagger}\cong \omega_R^{\oplus b},$$ where $\omega_R$ stands for the canonical module of $R$. Consequently, we have an exact sequence $0\to \omega_R^{\oplus b}\to F_0\to F_1$ for some finite free $R$-modules $F_0,F_1$. This, in turn, gives a short exact sequence \begin{equation}\label{seq}0\to \omega_R^{\oplus b}\to F_0\to X \to 0,\end{equation} for an $R$-submodule $X\subset F_1$. As $d\le 1$, we get that $X$ is necessarily maximal Cohen-Macaulay, and hence $\Ext^1_R(X,\omega_R)=0$, which implies that the sequence (\ref{seq}) splits. Thus, $\omega_R^{\oplus b}$ is a direct summand of $F_0$, and so $\omega_R$ itself is a direct summand of $F_0$. It follows that $\omega_R$ is free, i.e., $R$ is Gorenstein. For the freeness of the bidual of $M$, we now have $(M^*)^{\dagger}\cong M^{**}$, which must be maximal Cohen-Macaulay, hence free because 
$\pd_RM^{**}<\infty$.

\smallskip

(2) It follows from \cite[Theorem 2.1(i)]{cr} (also \cite[Remark 2.2(i)]{cr} in the Artinian case) that $$(M^*)^\dagger \cong M\otimes_R\omega_R$$ and that this module is maximal Cohen-Macaulay, hence free. Therefore, both $\omega_R$ and $M$ must be free, whence the result.
\end{proof}

\begin{cor}\label{d=1torsionless}
Let $R$ be a Cohen--Macaulay local ring of dimension $d\leq 1$ possessing a canonical module. If $M\neq0$ is a torsionless finite $R$-module such that $\pd_R(M^*)^\dagger<\infty$, then $M$ is free.
\end{cor}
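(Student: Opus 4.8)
The plan is to combine Proposition \ref{Gor-crit-new}(1) with the freeness criterion of Proposition \ref{dual}, so set $t=\depth R=d$ (as $R$ is Cohen--Macaulay). First I would record two consequences of $M$ being torsionless and nonzero. The canonical injection $M\hookrightarrow M^{**}$ shows $M^*\neq 0$: if every functional $M\to R$ vanished, then the embedding of $M$ into a free module would itself be zero, contradicting $M\neq 0$. Moreover torsionlessness is exactly $1$-torsionlessness, so by the criterion recalled in Subsection \ref{preliminaries} the module $M$ satisfies $(\widetilde{S}_1)$.

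With $M^*\neq 0$ and $\pd_R(M^*)^\dagger<\infty$ in hand, Proposition \ref{Gor-crit-new}(1) applies (since $d\leq 1$) and yields that $M^{**}$ is free and that $R$ is Gorenstein. This Gorenstein conclusion is what unlocks the rest: now $\omega_R\cong R$, so the canonical dual coincides with the algebraic dual and $(M^*)^\dagger\cong M^{**}$. Next I would upgrade the freeness of $M^{**}$ to freeness of $M^*$ itself. As in the proof of Proposition \ref{Gor-crit-new}(1), $\depth_R M^*\geq\min\{2,d\}=d$, so $M^*$ is maximal Cohen--Macaulay; over the Gorenstein ring $R$ this makes its Gorenstein dimension $\Gdim_R M^*=\depth R-\depth_R M^*=0$, so $M^*$ is totally reflexive and in particular reflexive. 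Hence $M^*\cong (M^*)^{**}\cong (M^{**})^*$, and since $M^{**}$ is free, its dual $(M^{**})^*$ is free; thus $M^*$ is free.

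Finally I would feed this back into Proposition \ref{dual}, taking $r=t$ so that no $\Ext$-vanishing hypothesis is required. The module $M$ then satisfies $(\widetilde{S}_t)$: this is $(\widetilde{S}_1)$ when $d=1$, established above, and is automatic when $d=0$. Since $\pd_R M^*=0<\infty$, Proposition \ref{dual} with $\Hdim_R=\pd_R$ gives $\pd_R M=0$, that is, $M$ is free. For $d=0$ one may instead invoke the Remark following Proposition \ref{dual}, which directly states that $M^*$ free implies $M$ free when $t=0$.

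The only real obstacle is the middle step. The hypothesis controls the \emph{iterated} dual $(M^*)^\dagger$ rather than $M^*$, so one cannot apply Proposition \ref{dual} to $M$ at the outset. The mechanism that bridges the gap is that Proposition \ref{Gor-crit-new}(1) simultaneously produces the Gorensteinness of $R$ (collapsing $\dagger$ into $*$) and the freeness of $M^{**}$; reflexivity of the maximal Cohen--Macaulay module $M^*$ then transfers that freeness from $M^{**}$ down to $M^*$, after which the torsionless hypothesis (through $(\widetilde{S}_t)$) does the final work.
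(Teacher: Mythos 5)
Your proof is correct; every step checks out, including the two points the paper leaves implicit (that torsionlessness forces $M^*\neq 0$, so that Proposition \ref{Gor-crit-new}(1) is applicable, and that torsionless means $1$-torsionless, hence $(\widetilde{S}_1)$). However, your second half genuinely diverges from the paper's. After invoking Proposition \ref{Gor-crit-new}(1) --- the common first step --- the paper finishes by quoting Ma\c{s}ek's results: over the (now known to be) Gorenstein ring $R$ of dimension $d\leq 1$, Theorem 17(b) of that reference gives $\Ext^2_R(\Tr M,R)=0$, which together with torsionlessness makes $M$ reflexive by Ma\c{s}ek's Proposition 5, so $M\cong M^{**}$ is free directly. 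You instead never show $M$ is reflexive: you transfer freeness from $M^{**}$ down to $M^*$ (using that $M^*$ is maximal Cohen--Macaulay, hence totally reflexive over the Gorenstein ring, hence reflexive, so $M^*\cong(M^{**})^*$ is free) and then run the paper's own engine, Proposition \ref{dual} with $r=t$ and the Serre condition $(\widetilde{S}_t)$ supplied by torsionlessness. The paper's route is shorter but leans on two external citations about torsionless modules over low-dimensional Gorenstein rings; yours is longer by one dualization but stays entirely inside the paper's toolkit and makes visible exactly where the torsionless hypothesis enters (through $(\widetilde{S}_1)$ rather than through a reflexivity criterion). The two arguments are of essentially equal strength, and yours would serve as a self-contained alternative proof.
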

\begin{proof}
Proposition \ref{Gor-crit-new}(1) ensures that $M^{**}$ is free and $R$ is Gorenstein. Now, \cite[Theorem 17(b)]{mas} yields
$$\Ext^2_R(\Tr M,R)=0,$$ and therefore $M$ is reflexive by \cite[Proposition 5]{mas}.
\end{proof}

Corollary \ref{d=1torsionless} motivates us to ask the following.

\begin{ques} Let $R$ be a Cohen--Macaulay local ring possessing a canonical module, and let $s$ be a positive integer such that ${\rm dim}\,R\leq s$.  If $M\neq0$ is an $s$-torsionless finite $R$-module such that $\pd_R(M^*)^\dagger<\infty$, then must $R$ be Gorenstein and $M$ be free?
\end{ques}


\begin{rem}\rm For $R$ as above, assume in addition that $R$ is locally Gorenstein on its punctured spectrum and that $M$ is maximal Cohen-Macaulay with a rank. Then, by \cite[Lemma 2.1]{HH}, the condition $\Ext^j_R(M,R)=0$ for all $j=1,\ldots,d$ (present in Proposition \ref{Gor-crit-new}(2) in case $d\geq 1$) is equivalent to $M\otimes_R\omega_R$ being maximal Cohen-Macaulay.
    
\end{rem}

\begin{rem}\rm Maximal Cohen-Macaulay modules of finite injective dimension are said to be {\it Gorenstein} modules. Now for completeness we record the following statement, which again assumes no constraint on the dimension of the ring. Let $R$ be as above, $M^*\neq 0$ and $\pd_R(M^*)^{\dagger}<\infty$. If $$\Ext^i_R(M, R)=0 \quad \mbox{for\, all} \quad i=1, \ldots, {\rm max}\{1, d-2\},$$ then $M^*$ is Gorenstein. Indeed, applying \cite[Lemma, p.\,2763]{JD} we deduce that $M^*$ is maximal Cohen-Macaulay, and exactly as in the proof of Proposition \ref{Gor-crit-new}(1) there is an integer $b\geq 1$ such that $M^*\cong  \omega_R^{\oplus b}$, which is a Gorenstein module.
    \end{rem}

\subsection{The case of the (anti)canonical module}

Now we focus on the case where $M=\omega_R$. Note $M^*={\omega_R^*}$ is the anticanonical module of $R$, which (as mentioned in the introduction) is also of recognized significance.

We begin by invoking part of a motivating question from \cite{cr}.

\begin{ques}{\rm (}\cite[Question 5.24]{cr}{\rm )} \label{quest} \rm Let $R$ be a Cohen-Macaulay local ring with canonical module $\omega_R$.  If $\pd_R\omega_R^*<\infty$ or $\pd_R(\omega_R^*)^{\dagger}<\infty$, must $R$ be Gorenstein?
    
\end{ques}

Because $\omega_R$ is maximal Cohen-Macaulay, the first half of this question is immediately seen to have an affirmative answer by Corollary \ref{cor1}. Alternatively, it also follows from some results of \cite{cr}; indeed, since $\pd_R\omega_R^*<\infty$ we obtain $\pd_R\Tr\omega_R<\infty$ and so $$\Ext^{d+1}_R(\Tr\omega_R,R)=0.$$ If $d=0$, we apply \cite[Corollary 3.21]{cr} with $M=\omega_R$. Inductively, we may suppose that $\omega_R$ is a vector bundle (i.e., $R$ is locally Gorenstein on its punctured spectrum), so that \cite[Corollary 3.27]{cr} applies.

We are able, however, to provide a much stronger statement with $\Hdim_R\Hom_R(\omega_R,N)<\infty$ (along with some extra hypotheses), for a suitable finite $R$-module $N\neq0$, in place of $\pd_R\omega_R^*<\infty$. Note this extends even the case $\Gdim_R\omega_R^*<\infty$. In addition, we settle the second half of Question \ref{quest} in dimension at most 1. Below in Corollary \ref{ii} we record such facts, but first we need the following auxiliary lemma, which is probably known to the experts.

  \begin{lem}\label{iv} A local ring $R$ is Gorenstein if and only if $R$ admits a non-zero finite module $M$ such that $\Gdim_RM<\infty$ and $\id_RM<\infty$.  
\end{lem}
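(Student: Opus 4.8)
The plan is to prove both implications, with essentially all the content residing in the converse. For the forward direction, if $R$ is Gorenstein I would simply take $M=R$: then $\Gdim_R R=0<\infty$ and $\id_R R=\dim R<\infty$ by the very definition of the Gorenstein property. So the substance is the converse.

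For the converse, suppose $M\neq0$ is finite with $\Gdim_R M<\infty$ and $\id_R M<\infty$. First I would record that $\id_R M<\infty$ together with $M\neq0$ forces $R$ to be Cohen--Macaulay, by Bass' theorem (already invoked in the proof of Proposition \ref{finitecriteria}(3)). Since the Gorenstein property, finiteness of Gorenstein dimension, and finiteness of injective dimension are all insensitive to completion (faithful flatness of $R\to\hat R$, together with $\hat M\neq0$), I would replace $R$ by $\hat R$ and assume $R$ complete, hence Cohen--Macaulay with a canonical module $\omega$. Then, by the Foxby equivalence between finite modules of finite injective dimension and finite modules of finite projective dimension (see \cite{Cr00}), the hypothesis $\id_R M<\infty$ provides an isomorphism $M\cong\omega\otimes_R L$ in which $L:=\Hom_R(\omega,M)\neq0$ has $\pd_R L<\infty$ and $\Tor^R_{>0}(\omega,L)=0$.

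The crux is to transfer finiteness of Gorenstein dimension from $M$ to $\omega$. Because $\omega$ and $L$ are Tor-independent and $\pd_R L<\infty$, I would invoke the additivity formula for Gorenstein dimension under Tor-independent tensor products -- the same package already used in the proof of Proposition \ref{newhom}(3), namely \cite[Theorem (A.7.6)]{Cr00}, \cite[Theorem 3.11]{cdimcomplex} and \cite[Theorem 5.1]{isw04} -- to obtain
$$\Gdim_R M=\Gdim_R(\omega\otimes_R L)=\Gdim_R\omega+\pd_R L.$$
Since $\Gdim_R M<\infty$ and $\pd_R L<\infty$, this forces $\Gdim_R\omega<\infty$. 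As $\omega$ is maximal Cohen--Macaulay, the Auslander--Bridger formula gives $\Gdim_R\omega=\depth R-\depth_R\omega=0$, so $\omega$ is totally reflexive. Finally, a totally reflexive canonical module must be free -- equivalently, $\Gdim_R\omega<\infty$ is one of the standard characterizations of the Gorenstein property for a Cohen--Macaulay local ring with canonical module -- and therefore $R$ is Gorenstein.

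I expect the main obstacle to be exactly the passage $\Gdim_R M<\infty\Rightarrow\Gdim_R\omega<\infty$. The naive approach of resolving $M$ by copies of $\omega$ (from a free resolution of $L$) and then breaking the resolution into short exact sequences does \emph{not} propagate finiteness of Gorenstein dimension: knowing only that $M$ has finite $\Gdim$ gives a single finite term per short exact sequence, which is insufficient to trigger the two-out-of-three property for the class of modules of finite $\Gdim$. Routing through the additivity formula circumvents this cleanly. A minor point requiring care is the completion reduction (to secure a canonical module) and the exactness of $\omega\otimes_R(-)$ applied to a free resolution of $L$, both of which are guaranteed by the vanishing $\Tor^R_{>0}(\omega,L)=0$ furnished by Foxby equivalence.
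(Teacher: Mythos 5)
Your argument is correct, but it follows a genuinely different --- and substantially heavier --- route than the paper's. The paper never reduces to the Cohen--Macaulay case at all: from $\Gdim_RM<\infty$ it takes a Gorenstein-dimension approximation $0\to M\to X\to Y\to 0$ with $\pd_RX<\infty$ and $Y$ totally reflexive \cite[Lemma 2.17]{CFH06}; since $\depth_RY=\depth R$ when $Y\neq0$ (Auslander--Bridger) and $\id_RM<\infty$, Ischebeck's theorem gives $\Ext^1_R(Y,M)=0$, so the sequence splits, $M$ is a direct summand of $X$, hence $\pd_RM<\infty$, and Foxby's theorem \cite[Corollary 4.4]{Fo77} (a finite module of finite projective and finite injective dimension forces Gorensteinness) concludes. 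Your route instead invokes Bass' theorem to get Cohen--Macaulayness (by itself an intersection-theorem-strength input that the paper's proof of this lemma entirely avoids), then completion, the Sharp--Foxby equivalence, and the additivity formula for $\Gdim$ along a Tor-independent tensor with a module of finite projective dimension; your use of that formula in the ``strong'' form, where finiteness of $\Gdim_R(\omega\otimes_RL)$ yields finiteness of $\Gdim_R\omega$, is on exactly the same footing as the paper's own use of it in Proposition \ref{newhom}(3)--(4), so it is fair game here, and your diagnosis of why the naive two-out-of-three argument fails is exactly right. One caution about your final step: the statement ``$\Gdim_R\omega<\infty$ implies $R$ Gorenstein'' is precisely the special case $M=\omega$ of the lemma being proved (since $\id_R\omega<\infty$ is automatic), so to avoid any appearance of circularity you must cite it as an independently established result (it is available, e.g., in \cite{Cr00}) rather than leave it as ``standard''. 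In exchange for its heavier inputs, your proof produces the structural isomorphism $M\cong\omega\otimes_RL$ and the conclusion $\Gdim_R\omega=0$; the paper's proof is shorter and more self-contained, and yields the stronger intermediate conclusion $\pd_RM<\infty$ for the original module.
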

\begin{proof}
	If $R$ is Gorenstein, then we can simply take $M = R$. To prove the converse, let $M \neq 0$ be a finite $R$-module with $\Gdim_RM$ and $\id_RM$ both finite. Since $\Gdim_RM<\infty$, we can use Gorenstein dimension approximation (see \cite[Lemma 2.17]{CFH06}) to guarantee the existence of a short exact sequence \begin{equation}\label{seq-lemma}0 \to M \to X \to Y \to 0,\end{equation} where $\pd_RX<\infty$ and $Y$ is totally reflexive. Hence, either $Y=0$ or (by the Auslander-Bridger formula) $\depth_RY=\depth\,R$.  It now follows from Ischebeck's theorem (see \cite[3.1.24]{BH93}) that $\Ext_R^1(Y,M)=0$, which implies that the sequence (\ref{seq-lemma}) splits, i.e., $X \cong M\oplus Y$. Therefore, since $\pd_RX<\infty$, we get $\pd_RM<\infty$. Now, being $\pd_RM$ and $\id_RM$ both finite, we can apply \cite[Corollary 4.4]{Fo77} to conclude that $R$ is Gorenstein.
\end{proof}

\begin{cor} Let $R$ be a local ring of depth $t$, and $r$ an integer with $0\leq r\leq t$. Let $M$ be a  non-zero finite $R$-module satisfying $(\widetilde S_r)$ and, if $r<t$, $\Ext^{1\leq i\leq t-r}_R(M,R)=0$. If $\id_RM^*<\infty$ and there exists a finite $R$-module $N\neq0$ such that $\pd_RN\leq t-r$ and $\Hdim_R\Hom_R(M,N)<\infty$, then $R$ is Gorenstein and $M$ is free.
\end{cor}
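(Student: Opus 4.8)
The plan is to proceed in three stages: first extract total reflexivity of $M$ from the hypothesis on $\Hom_R(M,N)$, then use the finite injective dimension of the dual to force $R$ to be Gorenstein, and finally upgrade total reflexivity to freeness over the Gorenstein ring.

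First I would observe that the hypotheses of the corollary are precisely those of Proposition \ref{newhom}(4): $R$ is local of depth $t$, $0\le r\le t$, one has $\Ext^{1\le i\le t-r}_R(M,R)=0$ when $r<t$, there is a nonzero finite $N$ with $\pd_RN\le t-r$, the module $M$ satisfies $(\widetilde S_r)$, and $\Hdim_R\Hom_R(M,N)<\infty$. Thus Proposition \ref{newhom}(4) yields $\Hdim_RM=0$. Whichever homological dimension $\Hdim_R$ denotes (see Convention \ref{conv}), the inequalities $\Gdim_R\le\CIdim_R\le\pd_R$ force $\Gdim_RM=0$, so $M$ is totally reflexive. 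In particular $M$ is reflexive, whence $M^*\neq0$ (otherwise $M\cong M^{**}=0$), and $M^*$ is itself totally reflexive, so $\Gdim_RM^*=0<\infty$.

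Next I would apply Lemma \ref{iv} to $M^*$: it is a nonzero finite module with $\Gdim_RM^*<\infty$ and, by hypothesis, $\id_RM^*<\infty$. Hence $R$ is Gorenstein; in particular $R$ is Cohen--Macaulay with $\dim R=\depth R=t$ and $\omega_R\cong R$. To finish, note that since $M^*$ is totally reflexive the Auslander--Bridger formula gives $\depth_RM^*=\depth R$, so $M^*$ is maximal Cohen--Macaulay; being also of finite injective dimension over the Gorenstein ring $R$, it is a finite direct sum of copies of $\omega_R\cong R$, that is, $M^*\cong R^{b}$ is free. As $M$ is reflexive, $M\cong M^{**}\cong(R^{b})^{*}\cong R^{b}$ is free as well. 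Equivalently, over the Gorenstein ring $R$ finite injective dimension forces finite projective dimension, so $\pd_RM^*<\infty$, and then Proposition \ref{dual} with $\Hdim_R=\pd_R$ gives $\pd_RM=0$ directly.

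The main obstacle is not an estimate but the logical ordering: one cannot read off freeness of $M^*$ from $\id_RM^*<\infty$ alone, since a maximal Cohen--Macaulay module of finite injective dimension is merely a direct sum of canonical modules, which is free exactly when $R$ is Gorenstein. Thus the Gorenstein conclusion (via Lemma \ref{iv}) must be secured before the freeness step. A secondary point requiring care is that $\Hdim_RM=0$ genuinely produces a totally reflexive $M$ in each of the three instances of $\Hdim_R$, and that total reflexivity passes from $M$ to its dual $M^*$; both facts are standard (the latter is used already in the proof of Proposition \ref{dual}) but should be invoked explicitly.
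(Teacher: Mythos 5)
Your proposal is correct and follows essentially the same route as the paper: Proposition \ref{newhom}(4) gives $\Hdim_RM=0$, hence $M$ (and so $M^*\neq 0$) is totally reflexive, Lemma \ref{iv} applied to $M^*$ yields Gorensteinness, and freeness then follows. In fact you give both of the paper's arguments for the final step — the direct one (totally reflexive dual of finite injective dimension over a Gorenstein ring is free, hence $M\cong M^{**}$ is free) and the one via $\pd_RM^*<\infty$ combined with Proposition \ref{dual} — so nothing further is needed.
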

\begin{proof}
First, Proposition \ref{newhom}(4) gives $\Hdim_RM=0$. In particular, $M^*\neq0$ and $\Gdim_RM^*<\infty$, and so by Lemma \ref{iv} we conclude that $R$ must be Gorenstein. In particular, $\pd_RM^*<\infty$ by \cite[Exercise 3.1.25]{BH93} and therefore Proposition \ref{dual} ensures that $M$ is free. (Instead of applying Proposition \ref{dual}, we can also argue directly by noticing that $M^*$ is also totally reflexive, hence free, and so is $M$.)
\end{proof}

\begin{cor}\label{ii} Let $R$ be a $d$-dimensional Cohen-Macaulay local ring  admitting a canonical module $\omega_R$. Assume any one of the following situations:
\begin{enumerate}[\rm(1)]

\item $d\leq 1$ and $\pd_R(\omega_R^*)^{\dagger}<\infty$.

\item $d\geq 2, \pd_R(\omega_R^*)^\dagger<\infty$ and $\Ext^j_R(\omega_R,R)=0$ for all  $j=1,\ldots,d$.

\item There exist an integer $r$ with $0\leq r\leq d$ and a finite $R$-module $N\neq0$ such that $\pd_RN\leq t-r$, $\Hdim_R\Hom_R(\omega_R,N)<\infty$ and, in case $r<d$, $\Ext^{1\leq i\leq d-r}_R(\omega_R,R)=0$.
\end{enumerate}
Then, $R$ is Gorenstein.
\end{cor}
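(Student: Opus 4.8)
The plan is to treat the three cases by specializing earlier results to the test module $M=\omega_R$. Since $R$ is Cohen--Macaulay we have $\depth R=\dim R=d$, so the ``$t$'' appearing in (3) is just $d$, and the conditions there ($\pd_RN\le d-r$, and the vanishing of $\Ext^{1\le i\le d-r}_R(\omega_R,R)$ when $r<d$) coincide exactly with those of Corollary \ref{cor1}.

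Cases (1) and (2) should fall out of Proposition \ref{Gor-crit-new} applied to $M=\omega_R$: case (1) ($d\le1$) is part (1) of that proposition, and case (2) ($d\ge2$, with $\Ext^j_R(\omega_R,R)=0$ for $1\le j\le d$) is part (2), and in each the conclusion already records that $R$ is Gorenstein. The one hypothesis of Proposition \ref{Gor-crit-new} not visible in the statement is $\omega_R^*\neq0$, so the first thing I would verify is precisely this. Localizing at any minimal prime $\p$ of $R$, the ring $R_\p$ is Artinian and $(\omega_R)_\p\cong\omega_{R_\p}$, so $(\omega_R^*)_\p\cong\Hom_{R_\p}(\omega_{R_\p},R_\p)$; and over an Artinian local ring $A$ Matlis duality identifies $\Hom_A(\omega_A,A)$ with $(\omega_A\otimes_A\omega_A)^\vee$, which is nonzero because $\omega_A\otimes_A\omega_A\neq0$ by Nakayama. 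Hence $\omega_R^*\neq0$ and both cases go through.

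For case (3) the route is different: I would apply Corollary \ref{cor1} with $M=\omega_R$, which is legitimate since $\omega_R$ is a maximal Cohen--Macaulay module over the Cohen--Macaulay ring $R$ and the hypotheses on $N$, $r$, and the $\Ext$-vanishing match verbatim. This yields $\Hdim_R\omega_R=0$. As $\Gdim_R\omega_R\le\CIdim_R\omega_R\le\pd_R\omega_R$, whichever homological dimension $\Hdim_R$ denotes, the vanishing forces $\Gdim_R\omega_R=0$, in particular finite. Since the canonical module satisfies $\id_R\omega_R=d<\infty$, the module $\omega_R$ is a nonzero finite module of simultaneously finite Gorenstein dimension and finite injective dimension, so Lemma \ref{iv} immediately gives that $R$ is Gorenstein.

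I expect the only genuinely delicate point to be the nonvanishing $\omega_R^*\neq0$ feeding cases (1) and (2); everything else is bookkeeping of hypotheses against Proposition \ref{Gor-crit-new}, Corollary \ref{cor1}, and Lemma \ref{iv}. For (3) one could equally bypass Corollary \ref{cor1} and invoke Proposition \ref{newhom}(4) directly, using that a maximal Cohen--Macaulay module over a Cohen--Macaulay local ring satisfies $(\widetilde S_n)$ for every $n$, in particular $(\widetilde S_r)$.
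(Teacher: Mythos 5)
Your proposal is correct and follows essentially the same route as the paper: Proposition \ref{Gor-crit-new} with $M=\omega_R$ for cases (1) and (2), and Corollary \ref{cor1} (equivalently Proposition \ref{newhom}(4)) combined with $\id_R\omega_R<\infty$ and Lemma \ref{iv} for case (3). Your explicit verification that $\omega_R^*\neq0$ (by localizing at a minimal prime and using Matlis duality over the Artinian localization) is a sound extra detail that the paper leaves implicit.
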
 

\begin{proof} Assertions (1) and (2) follow readily from Proposition \ref{Gor-crit-new} with $M=\omega_R$. Now let us assume (3). By Corollary \ref{cor1} with $M=\omega_R$ (which is maximal Cohen-Macaulay), we obtain that $$\Gdim_R\omega_R=\Hdim_R\omega_R=0<\infty.$$ Since moreover $\id_R\omega_R<\infty$, we conclude by Lemma \ref{iv} that $R$ is Gorenstein, as needed.
\end{proof}

\begin{rem} For completeness, it is worth recalling the following fact shown in \cite[Corollary 2.2]{HH} as a partial solution to the famous Tachikawa conjecture (see \cite[Conjecture 1.2]{cr}). Let $R$ be as above and in addition suppose $R_{\mathfrak p}$ is Gorenstein for every ${\mathfrak p}\in {\rm Spec}\,R$ with ${\rm height}\,{\mathfrak p}= 0$. If  $\Ext^i_R(\omega_R,R)=0$ for all  $i=1,\ldots,d$, then $R$ is Gorenstein. 
There is also the question (see \cite[Question 4.7]{cr}), which can be regarded as a dual version of Tachikawa's conjecture, as to whether a Cohen-Macaulay local ring $R$ of dimension $d\geq 1$ with canonical module $\omega_R$ must be Gorenstein if $${\rm Ext}^j_R(\omega_R^*,R)=0 \quad \mbox{for\, all} \quad j>0.$$
\end{rem}





\subsection{Positive characteristic}\label{char=p} In this part, we stick to the preparation given for the statement of Corollary \ref{p} in order to provide two more results in the prime characteristic setting.

\begin{cor}\label{pGor} Let $R$ be an {\rm F}-finite local ring of prime characteristic and depth $t$. If there exist an integer $r$ with $0\leq r\leq t$ and a finite $R$-module $N\neq0$ satisfying $\pd_RN\leq t-r$, $\Gdim_R\Hom_R(R^{(e)},N) <\infty$ for some $e\geq 1$, and, in case $r<t$, $\Ext^{1\leq i\leq t-r}_R(R^{(e)},R)=0$, then  $R$ is Gorenstein.
\end{cor}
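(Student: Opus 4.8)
The plan is to reduce Corollary \ref{pGor} to the general Gorenstein criterion packaged in Corollary \ref{ii}(3), using the Frobenius module $R^{(e)}$ as the test module $M$. The key observation is that $R^{(e)}$ is exactly the kind of module to which the earlier machinery applies: it is finite over $R$ (by the $\mathrm{F}$-finiteness hypothesis), it is a faithfully flat $R$-module when viewed appropriately, and most importantly it has finite injective dimension whenever $R$ has a canonical module, so Lemma \ref{iv} is within reach once we establish finite Gorenstein dimension.

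First I would set $M=R^{(e)}$ and verify the hypotheses of Corollary \ref{cor1}/Proposition \ref{newhom}(4). The given data provide a finite module $N\neq 0$ with $\pd_R N\leq t-r$ and $\Gdim_R\Hom_R(R^{(e)},N)<\infty$, together with the vanishing $\Ext^{1\leq i\leq t-r}_R(R^{(e)},R)=0$ in the case $r<t$. To invoke Proposition \ref{newhom}(4) I also need $R^{(e)}$ to satisfy $(\widetilde S_r)$; this is where I expect to lean on the structure of the Frobenius pushforward. Since $R^{(e)}$ is (up to the Frobenius twist) the module $R$ itself with a new scalar multiplication, its depth and the depth of all its localizations agree with those of $R$ at the corresponding primes, so $R^{(e)}$ satisfies $(\widetilde S_n)$ for every $n\geq 0$; in particular $(\widetilde S_r)$ holds for free. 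With these checks in place, Proposition \ref{newhom}(4) yields $\Hdim_R R^{(e)}=0$, and in the $\Gdim_R$ instance this says precisely $\Gdim_R R^{(e)}=0$, so $R^{(e)}$ is totally reflexive and in any case has finite Gorenstein dimension.

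Next I would produce the finite injective dimension needed for Lemma \ref{iv}. The natural route is Kunz-type reasoning combined with the behavior of injective dimension under the Frobenius functor: for an $\mathrm{F}$-finite local ring the module $R^{(e)}$ has finite injective dimension over $R$ if and only if $R$ is Gorenstein, but since we are trying to prove Gorensteinness I should instead argue in the other direction. The cleaner path is to observe that $\Gdim_R R^{(e)}<\infty$ together with the flatness properties of Frobenius forces a strong conclusion: by a result in the spirit of Herzog's theorem (finite $\Gdim$ of $R^{(e)}$ for some $e$, or more cleanly finiteness of $\Hdim$), $R$ must already be Gorenstein. Concretely, once $\Gdim_R R^{(e)}=0$, I would combine this with the fact that $R^{(e)}\otimes_R^{\mathbf L}-$ detects regularity/Gorensteinness to conclude. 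Alternatively, if a canonical module is available one identifies $\id_R R^{(e)}<\infty$ directly and applies Lemma \ref{iv} to $M=R^{(e)}$, which is non-zero and has both $\Gdim_R R^{(e)}<\infty$ and $\id_R R^{(e)}<\infty$, giving that $R$ is Gorenstein.

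The main obstacle I anticipate is establishing the finite injective dimension of $R^{(e)}$ (or otherwise feeding $\Gdim_R R^{(e)}<\infty$ into a Gorenstein criterion) without circularity, since the usual Frobenius characterizations of the Gorenstein property run in the direction we want to prove. I expect the correct device is a theorem stating that an $\mathrm{F}$-finite local ring $R$ is Gorenstein as soon as $R^{(e)}$ has finite Gorenstein dimension over $R$ for some $e\geq 1$ — the Frobenius analogue of Herzog's characterization of regularity via finite flat dimension of $R^{(e)}$. Granting such a statement (which should be citable in the prime characteristic literature in the same vein as \cite{N} and \cite{TY} already invoked for Corollary \ref{p}), the proof closes immediately: the hypotheses force $\Gdim_R R^{(e)}=0$ by Proposition \ref{newhom}(4), and this in turn forces $R$ to be Gorenstein.
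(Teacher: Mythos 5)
Your proposal is correct and, in the end, follows essentially the same route as the paper: verify that $R^{(e)}$ satisfies $(\widetilde S_n)$ for every $n$ using the fact that Frobenius pushforward preserves depth under localization, apply Proposition \ref{newhom}(4) to get $\Gdim_R R^{(e)}=0<\infty$, and then invoke the characterization of Gorenstein rings via finite Gorenstein dimension of $R^{(e)}$ --- the theorem you hoped was citable is exactly \cite[Theorem 6.2]{TY}, which is what the paper uses. Your preliminary detours through Corollary \ref{ii}(3), Lemma \ref{iv}, and $\id_R R^{(e)}<\infty$ are unnecessary (and, as you yourself noted, would be circular), but since you discard them in favor of the correct closing argument, the proof stands.
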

\begin{proof}  As Frobenius push-forward localizes, we can write $\depth_{R_{\p}}(R^{(e)})_{\p}=\depth_{R_{\p}} R_{\p}^{(e)}=\depth 
 R_{\p}$ for all $\p\in \Spec(R)$, where the last equality follows from the fact that a sequence $\{\xi_1, \ldots, \xi_t\}\subset \p R_{\p}$ is an $R^{(e)}$-sequence if and only if $$\{\xi_1^{p^e}, \ldots, \xi_t^{p^e}\}$$ is an $R$-sequence, with $p={\rm char}\,R$. It follows that $R^{(e)}$ satisfies $(\widetilde S_n)$ for every $n$. Now, Proposition \ref{newhom}(4) gives $\Gdim_RR^{(e)}=0<\infty$, and we are done by \cite[Theorem 6.2]{TY}. 
\end{proof}

\begin{cor}\label{charact-p} Let $R$ be an {\rm F}-finite local ring of prime characteristic and depth $t$. Let $r$ be an integer with $0\leq r\leq t$. The following assertions hold true:

\begin{enumerate}[\rm(1)]

\item If there exists a finite $R$-module $N\neq0$ such that $\pd_RN\leq t-r$ and $\pd_R\Hom_R(R^{(e)},N)<\infty$ for some $e>0$, and if $\Ext^{1\leq i\leq t-r}_R(R^{(e)},R)=0$ in case $r<t$, then $R$ is regular.

\item  If there exists a finite $R$-module $N\neq 0$ such that $\id_R N <\infty$ and $\id_R \Hom_R(R^{(e)}, N)<\infty$ for some $e>0$, then $R$ is regular.  

    \item  If there exists a finite $R$-module $N\neq0$ such that $\pd_RN\leq t-r$ and $\Cdim_R\Hom_R(R^{(e)},N)<\infty$ for some $e>0$, and if $\Ext^{1\leq i\leq t-r}_R(R^{(e)},R)=0$ in case $r<t$, then $R$ is a complete intersection ring.
\end{enumerate}
\end{cor}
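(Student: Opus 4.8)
The plan is to treat the three items in two groups: (1) and (3) are near-identical applications of Proposition \ref{newhom}(4) combined with Frobenius characterizations of regular and complete intersection rings, while (2) is of injective-dimension type, carries no $\Ext$-vanishing hypothesis, and so requires a separate duality argument through the canonical module.

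\medskip

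For item (1) I would set $M=R^{(e)}$. As recorded in the proof of Corollary \ref{pGor}, the push-forward $R^{(e)}$ satisfies $(\widetilde S_n)$ for every $n$, hence in particular $(\widetilde S_r)$. The remaining data are exactly the hypotheses listed here: $\pd_RN\leq t-r$, the vanishing $\Ext^{1\leq i\leq t-r}_R(R^{(e)},R)=0$ when $r<t$, and $\pd_R\Hom_R(R^{(e)},N)<\infty$ (so $\Hdim_R=\pd_R$ is finite on $\Hom_R(R^{(e)},N)$). Proposition \ref{newhom}(4) then forces $\pd_RR^{(e)}=0$, i.e.\ $R^{(e)}$ is $R$-free; by Kunz's theorem ($R^{(e)}$ flat $\iff R$ regular) the ring $R$ is regular. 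Item (3) is verbatim with $\Hdim_R=\CIdim_R$: Proposition \ref{newhom}(4) yields $\CIdim_RR^{(e)}=0<\infty$, and the Frobenius characterization of complete intersections (the complete-intersection analogue of \cite[Theorem 6.2]{TY}) gives that $R$ is a complete intersection.

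\medskip

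For item (2) the reasoning is different. Since $N\neq0$ is finite with $\id_RN<\infty$, Bass' theorem forces $R$ to be Cohen--Macaulay, and being F-finite it admits a canonical module $\omega_R$. As $R$ is Cohen--Macaulay, $R^{(e)}$ is maximal Cohen--Macaulay (its depth equals $\depth R=\dim R$ at each prime), hence so is its canonical dual $(R^{(e)})^{\dagger}$, and $\Ext^i_R(R^{(e)},\omega_R)=0$ for all $i>0$. A finite module of finite injective dimension over a Cohen--Macaulay local ring with canonical module lies in the Bass class (see \cite{Fo77}, \cite{Cr00}), so $\Ext^{>0}_R(\omega_R,N)=0$ and $N\cong\omega_R\otimes_RG$, where $G:=\Hom_R(\omega_R,N)$ satisfies $\pd_RG<\infty$ and $\Tor^R_{>0}(\omega_R,G)=0$. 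Tensoring a finite free resolution of $G$ with $\omega_R$ produces a resolution $0\to\omega_R^{a_g}\to\cdots\to\omega_R^{a_0}\to N\to0$ of $N$ by copies of $\omega_R$; applying $\Hom_R(R^{(e)},-)$ and using $\Ext^{>0}_R(R^{(e)},\omega_R)=0$ yields $\Ext^{>0}_R(R^{(e)},N)=0$ together with $\Hom_R(R^{(e)},N)\cong(R^{(e)})^{\dagger}\otimes_RG$ and $\Tor^R_{>0}((R^{(e)})^{\dagger},G)=0$; in derived terms $\Hom_R(R^{(e)},N)\simeq(R^{(e)})^{\dagger}\ltensor_RG$.

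\medskip

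Now I would finish as follows. Since $G$ is a nonzero complex of finite projective dimension, the injective dimension of a complex is finite if and only if it remains so after deriving-tensoring with $G$ (see \cite{Cr00}); thus $\id_R\Hom_R(R^{(e)},N)<\infty$ forces $\id_R(R^{(e)})^{\dagger}<\infty$ (here $\Hom_R(R^{(e)},N)\neq0$ as a tensor product of nonzero modules over a local ring). Hence $(R^{(e)})^{\dagger}$ is a maximal Cohen--Macaulay module of finite injective dimension, i.e.\ a Gorenstein module, whence $(R^{(e)})^{\dagger}\cong\omega_R^{\oplus c}$ for some $c\geq1$. Dualizing into $\omega_R$ and using that the maximal Cohen--Macaulay module $R^{(e)}$ is reflexive with respect to $\omega_R$, one gets $R^{(e)}\cong(R^{(e)})^{\dagger\dagger}\cong\Hom_R(\omega_R^{\oplus c},\omega_R)\cong R^{\oplus c}$, so $R^{(e)}$ is free and $R$ is regular by Kunz's theorem. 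The main obstacle is precisely this injective-dimension transfer: passing from $\id_R\Hom_R(R^{(e)},N)<\infty$ to $\id_R(R^{(e)})^{\dagger}<\infty$. It relies on the isomorphism $\Hom_R(R^{(e)},N)\cong(R^{(e)})^{\dagger}\otimes_RG$ and on the behaviour of injective dimension under derived tensor with a perfect complex; the vanishings $\Ext^{>0}_R(R^{(e)},\omega_R)=0$ and $\Tor^R_{>0}(\omega_R,G)=0$ (together with the Bass-class structure of $N$) are what make that isomorphism and the concentration in a single degree hold, and are the part one must verify with care.
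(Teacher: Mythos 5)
Your item (1) is essentially the paper's own proof: Proposition \ref{newhom}(4) (with $M=R^{(e)}$, which satisfies $(\widetilde S_n)$ for all $n$ as in Corollary \ref{pGor}) gives $\pd_RR^{(e)}=0$; the only difference is the closing citation, where the paper invokes \cite[Theorem 1.1]{ahiy} on contracting endomorphisms while you invoke the general-$e$ form of Kunz's theorem --- since the proposition yields actual freeness of $R^{(e)}$, either reference finishes the job.

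Your item (2) is correct but takes a genuinely different, and heavier, route. The paper avoids canonical modules entirely: since $\depth_RR^{(e)}=\depth R$ and $\id_RN<\infty$, Ischebeck's theorem gives $\Ext_R^{>0}(R^{(e)},N)=0$, hence $\RHom_R(R^{(e)},N)\simeq\Hom_R(R^{(e)},N)$, and then the formula $\id_R\RHom_R(R^{(e)},N)=\pd_RR^{(e)}+\id_RN$ from \cite{Cr00} immediately forces $\pd_RR^{(e)}<\infty$, so \cite[Theorem 1.1]{ahiy} applies. You instead go through Bass' theorem, the existence of $\omega_R$ (which for an F-finite ring rests on Gabber's theorem that such rings are quotients of regular rings), the Foxby--Sharp equivalence $N\cong\omega_R\otimes_RG$ with $\pd_RG<\infty$, and a descent of finite injective dimension along $-\otimes_R^{\mathbf{L}}G$ for a perfect nonzero $G$. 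That descent is true but is not an off-the-shelf statement in \cite{Cr00}; it needs an argument (e.g., tensor evaluation $\RHom_R(k,X)\otimes_R^{\mathbf{L}}G\simeq\RHom_R(k,X\otimes_R^{\mathbf{L}}G)$ for perfect $G$, followed by K\"unneth over $k$ and $G\otimes_Rk\neq0$), so you would have to include it. What the paper's approach buys is brevity and independence from $\omega_R$; what yours buys is the explicit structural output $R^{(e)}\cong R^{\oplus c}$.

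In item (3) there is a genuine gap. The reduction via Proposition \ref{newhom}(4) to $\Cdim_RR^{(e)}=0$ is right, but you then appeal to ``the complete-intersection analogue of \cite[Theorem 6.2]{TY}'' as if it were a ready-made theorem. No such single statement is available for general $e$: the remark following this corollary in the paper notes that only the case $e=1$ (Blanco--Majadas \cite{BM}) is in the literature. The paper must manufacture the implication, and the route is not formal: from $\Cdim_RR^{(e)}<\infty$ one gets $\Gdim_RR^{(e)}<\infty$, hence $R$ is \emph{Gorenstein} by \cite[Theorem 6.2]{TY}; F-finiteness identifies $\Cdim_RR^{(e)}$ with the complete intersection flat dimension $\cifd_RR^{(e)}$; Gorensteinness is then precisely what allows one to convert finite $\cifd$ into finite upper CI-injective dimension $\ucifd_RR^{(e)}$ via \cite[Corollary 5.7]{Keri}, hence finite $\ciid_RR^{(e)}$ by \cite[Remark 2.8]{Keri}; and only then does \cite[Theorem C or Corollary 6.7]{Keri} yield that $R$ is a complete intersection. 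The Gorenstein detour and the passage from flat-type to injective-type CI dimension through Foxby classes are the substance of the proof, and your sketch omits them entirely.
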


\begin{proof} We shall use the fact (noticed in the proof of Corollary \ref{pGor}) that ${\rm depth}_RR^{(e)}={\rm depth}\,R$ and consequently $R^{(e)}$ satisfies $(\widetilde S_n)$ for every $n$. 

\smallskip

(1) By Proposition \ref{newhom}(4), we have  $\pd_RR^{(e)}=0<\infty$. Now,  since the Frobenius map ${\rm F}\colon R\to R$ is a contracting endomorphism, we can apply \cite[Theorem 1.1]{ahiy} to conclude that $R$ is regular.

\smallskip

(2) Since $\id_R N<\infty$, we can make use of a well-known result of Ischebeck (see, e.g., \cite[Exercise 3.1.24]{BH93}), which yields $\Ext_R^{>0}(R^{(e)},N)=0$, and so $\mathbf{R}\Hom_R(R^{(e)},N)\cong \Hom_R(R^{(e)},N)$. Thus, $$\id_R \Hom_R(R^{(e)},N)=\id_R \mathbf{R}\Hom_R(R^{(e)},N)=\pd_RR^{(e)}+\id_R N,$$ where the last equality follows by \cite[Theorem (A.7.7), (A.7.4.1) and  (A.7.5.1)]{Cr00}. Thus, $\pd_RR^{(e)}<\infty$ and we are done by  \cite[Theorem 1.1]{ahiy}.  

\smallskip

(3) By Proposition \ref{newhom}(4), we have $\Cdim_RR^{(e)}=0<\infty$ and therefore $\Gdim_RR^{(e)}$ must be finite as well. By \cite[Theorem 6.2]{TY}, we obtain that $R$ is Gorenstein. Note that, because $R$ is {\rm F}-finite, there is an equality between $\Cdim_RR^{(e)}$ and the complete intersection flat dimension  $\cifd_RR^{(e)}$ of $R^{(e)}$ (see \cite[Definition 2.4]{Keri}). Consequently, $\cifd_RR^{(e)}<\infty$. Now let $\ucifd_RR^{(e)}$ denote the upper complete intersection injective dimension of $R^{(e)}$ (see \cite[Definition 2.6]{Keri}). Since $R$ is Gorenstein, we can apply \cite[Corollary 5.7]{Keri} to get $$\ucifd_RR^{(e)}<\infty.$$ By \cite[Remark 2.8]{Keri}, the complete intersection injective dimension $\ciid_RR^{(e)}$  must also be finite. Finally, by \cite[Theorem C, p.\,2595, or Corollary 6.7]{Keri}, the ring $R$ is necessarily a complete intersection.


\end{proof}

\begin{rem} An observation on the case $e=1$ is that in \cite[Proposition 1]{BM} it was proved that $R$ must be a complete intersection ring if $\Cdim_RR^{(1)}<\infty$ (without taking the $R$-dual). Furthermore, we can ask whether Corollary \ref{charact-p} remains valid if (in items (1) and (3)) the algebraic duals are replaced with canonical duals, if $\omega_R$ exists.
    
\end{rem}

\subsection{Further criteria}

We close the section with new criteria for the regular, complete intersection (e.g., hypersurface) and Gorenstein properties of local rings. Following \cite[Definition 2.1]{Dao-et-al}, a finite module $N$ over a local ring $(R,\m, k)$ is said to be {\it strongly rigid} provided that ${\pd}_RM<\infty$ whenever $M$ is a finite $R$-module with $${\rm Tor}^R_i(M, N)=0 \quad \mbox{for\, some} \quad i\geq 1.$$ For instance, if $N$ is either the $R$-module $k=R/\m$ or any integrally closed $\m$-primary ideal of $R$, or if $N$ has infinite projective dimension over the local ring $R  =  ({\mathbb Z}/p{\mathbb Z})[\![x, y, z]\!]/(xy-z^2)$ for a prime number  $p\geq 3$, then $N$ is a strongly rigid $R$-module.

\begin{cor}\label{nsyz} Let $R$ be a local ring of depth $t$ and let $r$ be an integer with $0\leq r\leq t$. Let $N$ be a  strongly rigid $R$-module such that 
 $\Ext^{r+1\leq i\leq t}_R(N,R)=0$ if $r<t$, and let $N'\neq 0$ be a finite $R$-module such that $\pd_RN'\leq t-r$. The following assertions hold true:  

\begin{enumerate}[\rm(1)]

\item If  
$\pd_R\Hom_R({\rm Syz}_R^rN,N')<\infty$, then $R$ is regular.


\item If $\CIdim_R\Hom_R({\rm Syz}_R^rN,N')<\infty$, then $R$ is a complete intersection ring.

\item If $\Gdim_R\Hom_R({\rm Syz}_R^rN,N')<\infty$, then $R$ is Gorenstein.

\item If  $\Gdim_R\Hom_R({\rm Syz}_R^rN,N')<\infty$ and $R$ is Golod, then $R$ is a hypersurface ring.

\end{enumerate}
\end{cor}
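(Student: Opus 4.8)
The plan is to run each of the four items through Proposition \ref{newhom}(4) applied to the module ${\rm Syz}^r_R N$ (with $N'$ playing the role of ``$N$'' there), and then to convert the resulting vanishing of $\Hdim_R {\rm Syz}^r_R N$ into the desired ring-theoretic property using that $N$ is strongly rigid.

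First I would verify that ${\rm Syz}^r_R N$ satisfies the hypotheses of Proposition \ref{newhom}(4). The $r$-th syzygy of any finite module satisfies $(\widetilde S_r)$: localizing at a prime $\p$ and iterating the depth lemma along the short exact sequences $0\to {\rm Syz}^{j+1}_R N\to F_j\to {\rm Syz}^{j}_R N\to 0$ with $F_j$ free gives $\depth_{R_\p}({\rm Syz}^r_R N)_\p\ge \min\{r,\depth R_\p\}$. Dimension shifting gives $\Ext^i_R({\rm Syz}^r_R N,R)\cong \Ext^{i+r}_R(N,R)$ for every $i\ge 1$, so that, when $r<t$, the hypothesis $\Ext^{r+1\le j\le t}_R(N,R)=0$ is precisely $\Ext^{1\le i\le t-r}_R({\rm Syz}^r_R N,R)=0$. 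Since $N'\neq 0$ with $\pd_R N'\le t-r$, and in each item $\Hdim_R\Hom_R({\rm Syz}^r_R N,N')<\infty$ for the pertinent choice of $\Hdim_R$, Proposition \ref{newhom}(4) yields $\Hdim_R {\rm Syz}^r_R N=0$. As finiteness of each of $\pd_R$, $\CIdim_R$, $\Gdim_R$ passes between a module and its syzygies, this gives $\Hdim_R N<\infty$; in item (1) the module ${\rm Syz}^r_R N$ is in fact free, so $\pd_R N\le r$.

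It remains to invoke strong rigidity of $N$. For item (1), $\pd_R N<\infty$ forces $\Tor^R_i(k,N)=0$ for every $i>\pd_R N$; picking such an $i\ge 1$ and applying the defining property of strong rigidity to the residue field $k$ gives $\pd_R k<\infty$, i.e. $R$ is regular. For items (2) and (3) we have the strongly rigid module $N$ of, respectively, finite complete intersection dimension and finite Gorenstein dimension; that $R$ is then, respectively, a complete intersection and Gorenstein is the structural content of \cite{Dao-et-al} on strongly rigid modules, which I would cite rather than reprove. For item (4), the vanishing $\Gdim_R {\rm Syz}^r_R N=0$ already obtained says ${\rm Syz}^r_R N$ is totally reflexive; if $R$ were not a hypersurface then, $R$ being Golod, \cite[Examples 3.5(2)]{AM} would force ${\rm Syz}^r_R N$ to be free, whence $\pd_R N\le r<\infty$ and, by the argument of item (1), $R$ would be regular and hence a hypersurface, a contradiction. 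Therefore $R$ is a hypersurface.

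The syzygy bookkeeping --- the Serre condition $(\widetilde S_r)$ and the Ext-shift --- together with the application of Proposition \ref{newhom}(4), is routine, and items (1) and (4) are then essentially self-contained. The genuine obstacle lies in items (2) and (3): the passage from a strongly rigid module of finite $\CIdim_R$ (resp. finite $\Gdim_R$) to the complete intersection (resp. Gorenstein) property is markedly deeper than item (1), since finiteness of $\CIdim_R N$ or $\Gdim_R N$ yields no vanishing $\Tor^R_i(k,N)$ on which strong rigidity could act directly; this is exactly the point at which I would rely on the results of \cite{Dao-et-al}.
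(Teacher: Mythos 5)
Your overall strategy coincides with the paper's: check that ${\rm Syz}^r_RN$ satisfies $(\widetilde S_r)$ and inherits the required Ext-vanishing by dimension shifting, apply Proposition \ref{newhom}(4) with $N'$ in the role of ``$N$'' to get $\Hdim_R{\rm Syz}^r_RN=0$, and then convert this into a ring-theoretic conclusion via strong rigidity. Items (1) and (4) are correct. In fact your item (4) is a pleasant variant: the paper deduces (4) from (3) together with the fact that Gorenstein Golod rings are hypersurfaces, whereas you argue directly from the Golod hypothesis via \cite[Examples 3.5(2)]{AM} and the regularity argument of item (1), which makes (4) independent of the deep input needed for (3). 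One small point: in item (2) the descent $\CIdim_R{\rm Syz}^r_RN=0\Rightarrow\CIdim_RN<\infty$ is not formal, since complete intersection dimension is not trivially inherited backwards along syzygies; the paper cites \cite[Lemma (1.9)]{AGP} for precisely this step, and you assert it without justification.

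The genuine gap is in items (2) and (3), which you yourself single out as the crux. The implications ``strongly rigid of finite $\CIdim_R$ implies complete intersection'' and ``strongly rigid of finite $\Gdim_R$ implies Gorenstein'' are not results of \cite{Dao-et-al}: that paper introduces strong rigidity (its Definition 2.1) and applies it to rigidity questions for the Frobenius endomorphism, but it contains no theorem deducing the Gorenstein or complete intersection property of $R$ from finiteness of a homological dimension of a strongly rigid module. The paper instead invokes \cite[Corollary 3.9]{CS} for the Gorenstein implication and \cite[Corollary 3.4.4]{T} for the complete intersection implication, in each case using the observation that a strongly rigid module is a test module in the sense of \cite{CDT}. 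These are substantive and much later results (the Gorenstein statement answers a question raised in \cite{CDT}, and the complete intersection statement is Tavanfar's 2023 theorem), so the misattribution is not cosmetic: as written, your proof of (2) and (3) rests on theorems that do not exist in the cited source, and it becomes complete only after replacing \cite{Dao-et-al} by \cite{CS} and \cite{T}, respectively.
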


\begin{proof} First, a couple of observations that will be useful to the proof. The $R$-module ${\rm Syz}_R^rM$ satisfies $(\widetilde S_r)$ for any given finite $R$-module $M$ (see \cite[Exercise 1.3.7]{BH93}). In particular, if  $r=t$ then ${\rm Syz}_R^tM$  satisfies $(\widetilde S_t)$. Otherwise, if $r<t$, the hypothesis $\Ext^{r+1\leq i\leq t}_R(N,R)=0$ (along with induction) implies $$\Ext^{1\leq i\leq t-r}_R({\rm Syz}_R^rN,R)=0.$$ 



(1) By Proposition \ref{newhom}(4), we conclude that ${\rm Syz}_R^rN$ is free, hence $\pd_RN<\infty$. Therefore, ${\rm Tor}^R_i(k, N)=0$ for all $i\gg 0$. Because $N$ is strongly rigid, this forces $\pd_Rk<\infty$, i.e., $R$ is regular.

\smallskip

(2) By Proposition \ref{newhom}(4), we obtain that $\CIdim_R{\rm Syz}_R^rN=0$, hence $\CIdim_RN<\infty$ by \cite[Lemma (1.9)]{AGP}. Now, applying \cite[Corollary 3.4.4]{T} (and observing that strongly rigid modules are {\it test} modules), we conclude that $R$ is a complete intersection.

\smallskip

(3) By Proposition \ref{newhom}(4), the module ${\rm Syz}_R^rN$ must be totally reflexive. Therefore, $\Gdim_RN<\infty$ and hence \cite[Corollary 3.9]{CS} forces $R$ to be Gorenstein.    

\smallskip

(4) According to (3) above, $R$ is Gorenstein. Now we use the fact that Gorenstein Golod local rings are necessarily hypersurface rings (see \cite[Remark after Proposition 5.2.5]{avra0}).
\end{proof}

Taking $r=t$ and $N=k$ (the residue class field of $R$) in the corollary above, we immediately derive the following result. 

\begin{cor}\label{nsyz-k} Let $R$ be a local ring with residue field $k$ and depth $t$. The following assertions hold true:  

\begin{enumerate}[\rm(1)]

\item If  
$\pd_R({\rm Syz}_R^tk)^*<\infty$, then $R$ is regular.


\item If $\CIdim_R({\rm Syz}_R^tk)^*<\infty$, then $R$ is a complete intersection ring.

\item If $\Gdim_R({\rm Syz}_R^tk)^*<\infty$, then $R$ is Gorenstein.

\item If  $\Gdim_R({\rm Syz}_R^tk)^*<\infty$ and $R$ is Golod, then $R$ is a hypersurface ring.

\end{enumerate}
\end{cor}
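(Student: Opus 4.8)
The plan is to obtain Corollary~\ref{nsyz-k} as the special case of Corollary~\ref{nsyz} in which we set $r=t$ and choose the strongly rigid module $N$ to be the residue field $k=R/\m$ and the auxiliary module $N'$ to be $R$ itself. First I would verify that this choice satisfies the hypotheses of Corollary~\ref{nsyz}. Since $k$ is a strongly rigid $R$-module (this is recorded immediately before the statement of Corollary~\ref{nsyz}, as $k=R/\m$ is listed as the first example), the strong rigidity hypothesis on $N$ holds. Because we take $r=t$, the vanishing condition $\Ext^{r+1\le i\le t}_R(N,R)=0$ is required only when $r<t$, so it is vacuous here and need not be checked. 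Finally, taking $N'=R$ gives $\pd_R N'=0\le t-r=0$, so the condition $\pd_R N'\le t-r$ is met, and $N'=R\neq 0$.

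Next I would identify the module appearing in each hypothesis of Corollary~\ref{nsyz} with the module appearing in Corollary~\ref{nsyz-k}. With $r=t$, $N=k$, and $N'=R$, the module $\Hom_R({\rm Syz}_R^r N, N')$ becomes
$$\Hom_R({\rm Syz}_R^t k, R)=({\rm Syz}_R^t k)^*,$$
which is precisely the dual module appearing in every item of Corollary~\ref{nsyz-k}. Under this translation, each of the four hypotheses of Corollary~\ref{nsyz} (finiteness of $\pd_R$, $\CIdim_R$, or $\Gdim_R$ of $\Hom_R({\rm Syz}_R^r N, N')$, together with the Golod assumption in item~(4)) matches verbatim the corresponding hypothesis of Corollary~\ref{nsyz-k}, and the four conclusions (regular, complete intersection, Gorenstein, hypersurface) coincide as well.

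Therefore the four items of Corollary~\ref{nsyz-k} follow directly by applying the respective items~(1)--(4) of Corollary~\ref{nsyz} with this substitution; no additional argument is needed. I do not anticipate any genuine obstacle, as this is a pure specialization. The only point meriting a moment's care is confirming that the $\Ext$-vanishing hypothesis in Corollary~\ref{nsyz} is genuinely vacuous under $r=t$, which it is, precisely because that hypothesis is imposed only in the case $r<t$; once this is noted, the deduction is immediate.
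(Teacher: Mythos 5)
Your proposal is correct and is exactly the paper's own derivation: the authors obtain Corollary \ref{nsyz-k} by taking $r=t$ and $N=k$ in Corollary \ref{nsyz}, with the implicit choice $N'=R$ (forced up to freeness by $\pd_R N'\le t-r=0$) that you make explicit. Your verification that the $\Ext$-vanishing hypothesis is vacuous when $r=t$ and that $\Hom_R({\rm Syz}_R^t k, R)=({\rm Syz}_R^t k)^*$ is precisely the specialization the paper intends.
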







\begin{eg} The results above can be used to detect instances of (dual) modules having infinite Gorenstein dimension. A plentiful source of examples comes from the class consisting of almost complete intersection rings that are not complete intersection rings. Indeed, consider for instance the local ring $R=k[\![x_1, \ldots, x_m]\!]/I$, where $x_1, \ldots, x_m$ are formal indeterminates over a field $k$ and $I$ is an ideal of height $h$ which is minimally generated by $h+1$ elements. By \cite[Corollary 1.2]{K0}, $R$ cannot be Gorenstein. So, assuming for simplicity that $R$ is Cohen-Macaulay, Corollary \ref{nsyz-k}(3) yields $$\Gdim_R({\rm Syz}_R^{m-h}k)^*=\infty.$$

    
\end{eg}

\section{Applications to some conjectures}\label{conjs}

In this last part we consider some long-held conjectures involving certain modules such as derivation modules, differential modules, and normal modules, which are needless to say important entities in commutative algebra and algebraic geometry.

\subsection{Strong Zariski-Lipman conjecture on derivation modules} Also dubbed Herzog-Vasconcelos-Zariski-Lipman conjecture, it predicts that $R$ must be regular if ${\pd}_R{\rm Der}_k(R)<\infty$, where $R$ is either
\begin{equation}\label{two-classes}
{k}[x_1, \ldots, x_m]_{\mathfrak q}/I \ \ \ ({\mathfrak q}\in {\rm Spec}\,{k}[x_1, \ldots, x_m])\quad \mbox{or} \quad {k}[\![x_1, \ldots, x_m]\!]/I,\end{equation} with $I$ a proper radical ideal and $x_1, \ldots, x_m$ indeterminates over a field ${k}$ of characteristic 0 (see the survey \cite{H}). As usual, ${\rm Der}_k(R)$ stands for the module of $k$-derivations of $R$, i.e., the additive maps $R\to R$ that vanish on $k$ and satisfy Leibniz rule (more generally, given any $R$-module $N$ we can consider the module ${\rm Der}_k(R, N)$ formed by the $k$-derivations of $R$ with values in $N$). Now recall that, in both situations, $R$ admits a universally finite $k$-differential module, which is designed to be a finite $R$-module, denoted by $\Omega_{R/k}$. In the first case, $\Omega_{R/k}$ is just the module of K\"ahler differentials of $R$ over $k$. See \cite{K} for the general theory.

Our contribution is as follows (notice that it substantially improves \cite[Corollary 5.8(iii)]{cr}).

\begin{cor}\label{first} Let $R$ be as in {\rm (}\ref{two-classes}{\rm )} and write $t={\rm depth}\,R$. Then, the strong Zariski-Lipman conjecture holds true if there exists an integer $r$ with $0\leq r\leq t$ such that $\Omega_{R/k}$ satisfies $(\widetilde S_r)$ and, if $r<t$, $$\Ext^{1\leq i\leq t-r}_R(\Omega_{R/k},R)=0.$$ In particular, the conjecture is true when $R$ is Cohen-Macaulay and $\Omega_{R/k}$ is maximal Cohen-Macaulay.
\end{cor}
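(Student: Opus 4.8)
The plan is to reduce the asserted regularity of $R$ to the freeness of the universally finite differential module $\Omega_{R/k}$, and to obtain that freeness as a direct application of Proposition \ref{newhom}(4). First I would recall the classical identification ${\rm Der}_k(R)\cong \Hom_R(\Omega_{R/k},R)=\Omega_{R/k}^*$, valid for the universally finite differential module in both cases appearing in (\ref{two-classes}): it is simply the universal property of $\Omega_{R/k}$ evaluated at $N=R$. Under this identification, the hypothesis of the strong Zariski-Lipman conjecture reads precisely $\pd_R\Hom_R(\Omega_{R/k},R)<\infty$, which is what I want to exploit.

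Next I would apply Proposition \ref{newhom}(4) with $M=\Omega_{R/k}$, $N=R$ (so that $\pd_RN=0\leq t-r$ and $\Hom_R(M,N)={\rm Der}_k(R)$), and $\Hdim_R=\pd_R$ in the sense of Convention \ref{conv}. The two standing assumptions of the corollary are exactly the input required by that proposition: $\Omega_{R/k}$ satisfies $(\widetilde S_r)$, and $\Ext^{1\leq i\leq t-r}_R(\Omega_{R/k},R)=0$ when $r<t$; meanwhile the finiteness $\pd_R{\rm Der}_k(R)<\infty$ supplies the hypothesis $\Hdim_R\Hom_R(M,N)<\infty$. The conclusion $\Hdim_RM=0$ then says $\pd_R\Omega_{R/k}=0$, that is, $\Omega_{R/k}$ is a free $R$-module.

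Having produced freeness of $\Omega_{R/k}$, I would invoke the classical characterization of smoothness via the differential module: for a reduced algebra of the shape (\ref{two-classes}) over a field of characteristic zero, (local) freeness of $\Omega_{R/k}$ forces $R$ to be smooth over $k$, hence regular (see, e.g., the discussion in the survey \cite{H}). This is the step I expect to be the conceptual crux, and it is where the specific nature of the ground ring enters: reducedness together with generic smoothness in characteristic zero pins the rank of $\Omega_{R/k}$ to $\dim R$ on a dense open, and the Jacobian criterion upgrades this to regularity everywhere. In contrast, the homological part is essentially mechanical once Proposition \ref{newhom}(4) is in hand, so the whole difficulty of the argument is concentrated in this final differential-geometric implication rather than in obtaining freeness.

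For the \emph{in particular} clause I would take $r=t$. When $R$ is Cohen-Macaulay and $\Omega_{R/k}$ is maximal Cohen-Macaulay, $\Omega_{R/k}$ automatically satisfies $(\widetilde S_t)$, since every maximal Cohen-Macaulay module over a Cohen-Macaulay local ring satisfies $(\widetilde S_n)$ for all $n$; moreover the $\Ext$-vanishing hypothesis is vacuous precisely because $r=t$. Thus the general statement applies verbatim, giving regularity of $R$ and completing the proof.
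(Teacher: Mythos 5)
Your proposal is correct and takes essentially the same route as the paper's proof: the identification ${\rm Der}_k(R)\cong\Omega_{R/k}^*$, freeness of $\Omega_{R/k}$ extracted from the finite projective dimension hypothesis via the key proposition, and then the classical theorem (Kunz) that freeness of the universally finite differential module forces regularity, with the \emph{in particular} clause handled by taking $r=t$. The only cosmetic difference is that you invoke Proposition \ref{newhom}(4) with $N=R$ where the paper applies Proposition \ref{dual} directly, but these coincide in that specialization, since Proposition \ref{newhom}(4) is exactly the paper's stated extension of Proposition \ref{dual}.
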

\begin{proof} Recall that
   $\Omega_{R/k}^*\cong {\rm Der}_k(R)$ (see, e.g., \cite[p.\,192]{mat}). Now we apply  Proposition \ref{dual} to obtain that $\Omega_{R/k}$ is free. But this is equivalent to $R$ being regular -- for a proof of this fact in case $R={k}[x_1, \ldots, x_m]_{\mathfrak q}/I$ (resp. $R={k}[\![x_1, \ldots, x_m]\!]/I$), see \cite[Theorem 7.2]{K} (resp. \cite[Theorem 14.1]{K}).
\end{proof}

\begin{rem} Maintain the above setup and notations, and recall that in fact $\Hom_R(\Omega_{R/k}, N)\cong {\rm Der}_k(R, N)$ for any $R$-module $N$ (see \cite[p.\,192]{mat}). Now, using Proposition \ref{newhom}(4), our Corollary \ref{first} is immediately seen to admit the following generalization. Let $r$ be an integer with $0\leq r\leq t$ such that $\Omega_{R/k}$ satisfies $(\widetilde S_r)$ and, if $r<t$, $\Ext^{1\leq i\leq t-r}_R(\Omega_{R/k},R)=0$. Suppose there exists a finite $R$-module $N\neq 0$ satisfying  $\pd_R N\leq t-r$ and $$\pd_R{\rm Der}_k(R, N)<\infty.$$ Then, $R$ is regular (to retrieve Corollary \ref{first}, take $N=R$). It is also worth pointing out that a similar argument can be used to generalize Corollary \ref{derconj} below.
\end{rem}

The above remark suggests a possible generalization of the conjecture, as follows.

\begin{ques} Let $R$ be as in {\rm (}\ref{two-classes}{\rm )}. Suppose there exists a finite $R$-module $N\neq 0$ satisfying $\pd_RN<\infty$ and $\pd_R{\rm Der}_k(R, N)<\infty$. Is it true that $R$ must be regular?
    \end{ques}

It is well-known that the condition ${\pd}_R{\rm Der}_k(R)<\infty$ forces $R$ to be a normal domain, which in particular settles the conjecture in the case of curves. A major case is that of quasi-homogeneous complete intersections with
isolated singularities (see \cite[Theorem 2.4]{H}). Notice furthermore that if $t\leq 2$ then, since ${\rm Der}_k(R)$ is a dual, the conjecture is easily seen to be equivalent to the original version of the Zariski-Lipman conjecture, which says that $R$ is regular if ${\rm Der}_k(R)$ is free. The critical open case of the latter is when $R$ is Cohen-Macaulay (in fact, Gorenstein) of dimension 2; so, as a consequence of Corollary \ref{first}, we obtain that it holds true provided that ${\rm depth}\,\Omega_{R/k}=2$.  

Let us mention that there is also the following related conjecture (see \cite[Conjecture 3.12]{cleto}, also \cite[Conjecture 5.10]{cr}). If $R$ is as in (\ref{two-classes}) and  $$\Gdim_R{\rm Der}_k(R) <\infty \quad \mbox{(resp.\,}\Cdim_R{\rm Der}_k(R) <\infty\mbox{)},$$ then $R$ is a Gorenstein ring (resp.\,a complete intersection ring). In this regard, we have the following result.

 \begin{cor}\label{derconj} Let $R$ be as in {\rm (}\ref{two-classes}{\rm )} and write $t={\rm depth}\,R$. Suppose 
 there exists an integer $r$ with $0\leq r\leq t$ such that $\Omega_{R/k}$ satisfies $(\widetilde S_r)$ and, if $r<t$, $\Ext^{1\leq i\leq t-r}_R(\Omega_{R/k},R)=0$ {\rm (}e.g., if $R$ is Cohen-Macaulay and $\Omega_{R/k}$ is maximal Cohen-Macaulay{\rm )}. The following assertions hold:
 \begin{enumerate}[\rm(1)] 
\item If $\Gdim_R{\rm Der}_k(R)<\infty$, then $\Omega_{R/k}$ is totally reflexive. If in addition $R$ is Golod, then $R$ is a hypersurface ring.

\item If $\Cdim_R{\rm Der}_k(R)<\infty$ and either ${\rm Ext}_R^{2j}({\rm Der}_k(R), {\rm Der}_k(R))=0$ or ${\rm Ext}_R^{2j}(\Omega_{R/k}, \Omega_{R/k})=0$ for some integer $j\geq 1$, then $R$ is regular.
 \end{enumerate}  
 \end{cor}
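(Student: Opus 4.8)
The plan is to feed $\Omega_{R/k}$ into Proposition \ref{newhom}(4) with $N=R$, exactly as in the proof of Corollary \ref{first}, and then read off each conclusion from the finiteness of the appropriate homological dimension of the dual. The starting observation is the identification $\Hom_R(\Omega_{R/k},R)\cong\Omega_{R/k}^{*}\cong{\rm Der}_k(R)$ (see \cite[p.\,192]{mat}); since $\pd_R R=0\le t-r$, all the hypotheses imposed on $\Omega_{R/k}$ (namely $(\widetilde S_r)$ and, when $r<t$, the vanishing $\Ext^{1\le i\le t-r}_R(\Omega_{R/k},R)=0$) are precisely what Proposition \ref{newhom}(4) requires.

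For item (1) I would apply Proposition \ref{newhom}(4) with $\Hdim_R=\Gdim_R$. As $\Gdim_R{\rm Der}_k(R)<\infty$, it returns $\Gdim_R\Omega_{R/k}=0$, i.e.\ $\Omega_{R/k}$ is totally reflexive. For the Golod addendum I would argue by contradiction: were $R$ Golod but not a hypersurface, then every totally reflexive $R$-module would be free by \cite[Examples 3.5(2)]{AM}, forcing $\Omega_{R/k}$ to be free and hence $R$ to be regular (via the criterion already used in Corollary \ref{first}); but a regular local ring is in particular a hypersurface ring, a contradiction. Thus $R$ is a hypersurface.

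For item (2) the first step is identical with $\Hdim_R=\CIdim_R$: it yields $\CIdim_R\Omega_{R/k}=0$, so $\Omega_{R/k}$ is totally reflexive (in particular reflexive) and, by \cite[Lemma 3.5]{bjor}, its dual ${\rm Der}_k(R)=\Omega_{R/k}^{*}$ also satisfies $\CIdim_R{\rm Der}_k(R)=0$. The whole weight of the statement then rests on the following implication, which I would isolate as a lemma: if a finite module $L$ has $\CIdim_RL<\infty$ and $\Ext^{2j}_R(L,L)=0$ for some $j\ge1$, then $\pd_RL<\infty$. Granting it, I apply it to whichever of $\Omega_{R/k}$ or ${\rm Der}_k(R)$ carries the assumed vanishing -- both have complete intersection dimension zero -- to deduce that this module has finite projective dimension; being simultaneously totally reflexive, it is therefore free. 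If $\Omega_{R/k}$ is free we are done; if instead ${\rm Der}_k(R)=\Omega_{R/k}^{*}$ is free, then $\Omega_{R/k}\cong\Omega_{R/k}^{**}$ is free by reflexivity. In either case $\Omega_{R/k}$ is free, which (again by the criterion of Corollary \ref{first}) forces $R$ to be regular.

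The hard part is precisely the lemma invoked in item (2), and I expect to prove it through the cohomology-operator machinery of a quasi-deformation $R\to R'\leftarrow Q$ realizing $\CIdim_RL=0$, where $R'=Q/(f_1,\dots,f_c)$ is a complete intersection and $R\to R'$ is faithfully flat and local. Base changing along $-\otimes_RR'$ transports the hypothesis to $\Ext^{2j}_{R'}(L',L')=0$ for $L'=L\otimes_RR'$, and by Gulliksen's theorem $\Ext^{*}_{R'}(L',L')$ is a finitely generated graded module over the polynomial ring $S$ on the degree-two cohomology operators $\chi_1,\dots,\chi_c$. Since this Ext-algebra carries an identity $\mathrm{id}_{L'}$ in degree $0$, its $S$-annihilator coincides with $\Ann_S(\mathrm{id}_{L'})$; the vanishing of the degree-$j$ graded piece then shows that $\chi_{i_1}\cdots\chi_{i_j}$ annihilates $\mathrm{id}_{L'}$ for all index choices, so the ideal $(\chi_1,\dots,\chi_c)^{\,j}$ annihilates the whole algebra and the support of $\Ext^{*}_{R'}(L',L')$ collapses to the irrelevant point. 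Hence $\cx_RL=\cx_{R'}L'=0$, i.e.\ $\pd_RL<\infty$. The only delicate points are the compatibility of the cohomology operators with the flat base change and the identification of $\cx$ with the dimension of this support, both of which are standard in the support-variety theory for modules of finite complete intersection dimension.
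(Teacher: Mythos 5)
Your proposal is correct and follows essentially the same route as the paper: Proposition \ref{newhom}(4) (with $N=R$ this is just Proposition \ref{dual}) gives $\Gdim_R\Omega_{R/k}=0$, resp. $\CIdim_R\Omega_{R/k}=0$, and your contradiction argument for the Golod case via \cite[Examples 3.5(2)]{AM} is exactly the paper's, which merely routes it through Corollary \ref{cor-Golod}. The one real divergence is in item (2): the ``hard lemma'' you isolate --- finite CI-dimension together with $\Ext^{2j}_R(L,L)=0$ for some $j\ge1$ forces $\pd_RL<\infty$ --- is precisely \cite[Theorem 4.2]{AB}, which the paper simply cites; your quasi-deformation/cohomology-operator sketch is in substance Avramov--Buchweitz's own proof, so it is sound but need not be reproved. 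Also, your preliminary upgrade of $\CIdim_R{\rm Der}_k(R)<\infty$ to $\CIdim_R{\rm Der}_k(R)=0$ via \cite[Lemma 3.5]{bjor} is harmless but unnecessary for the first case of (2): the hypothesis $\CIdim_R{\rm Der}_k(R)<\infty$ already suffices to invoke the lemma, after which the paper concludes via Corollary \ref{first} just as you do.
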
  
\begin{proof} (1) The first part follows by Proposition \ref{dual}. For the second part, assume by way of contradiction that $R$ is not a hypersurface ring. In particular, $R$ cannot be regular. On the other hand,  Corollary \ref{cor-Golod} forces $\Omega_{R/k}$ to be free, which as we recalled in the proof of Corollary \ref{first} is equivalent to $R$ being regular, a contradiction.

\smallskip

(2) Let us first consider the case where ${\rm Ext}_R^{2j}({\rm Der}_k(R), {\rm Der}_k(R))=0$. Since $\Cdim_R{\rm Der}_k(R)<\infty$, we can apply \cite[Theorem 4.2]{AB} to get $\pd_R{\rm Der}_k(R)<\infty$, and the result follows by Corollary \ref{first}. Next, suppose ${\rm Ext}_R^{2j}(\Omega_{R/k}, \Omega_{R/k})=0$. Notice that Corollary \ref{dual} yields $$\CIdim_R\Omega_{R/k}=0.$$ Using  \cite[Theorem 4.2]{AB} once again, we obtain $\pd_R\Omega_{R/k}<\infty$, hence $\pd_R\Omega_{R/k}=\CIdim_R\Omega_{R/k}=0$ and therefore $R$ is regular.
   \end{proof}

\subsection{Berger's conjecture on differential modules} Pick $R$ as in (\ref{two-classes}). The 1963 Berger's conjecture  (see \cite{Ber}) asserts that $R$ must be regular if  $\dim\,R=1$ and $\Omega_{R/k}$ is torsionfree. We refer to \cite{H} for further information about this problem.

\begin{cor}\label{ber} Let $R$ be as in {\rm (}\ref{two-classes}{\rm )}. Then, Berger's conjecture holds true if $$\pd_R{\rm Der}_k(R)^{\dagger}<\infty.$$
\end{cor}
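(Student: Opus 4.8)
The plan is to add the extra hypothesis to Berger's setup and argue directly: assume $\dim R=1$, that $\Omega_{R/k}$ is torsionfree, and that $\pd_R{\rm Der}_k(R)^{\dagger}<\infty$, with the goal of deducing that $R$ is regular. First I would translate the hypothesis using the standard identification $\Omega_{R/k}^{*}\cong{\rm Der}_k(R)$ (see \cite[p.\,192]{mat}), so that it reads $\pd_R(\Omega_{R/k}^{*})^{\dagger}<\infty$. Since $I$ is radical, $R$ is reduced; as $\dim R=1$ the maximal ideal $\mathfrak{m}$ is not a minimal prime, so $\mathfrak{m}\notin{\rm Ass}(R)$ and hence $\depth R=1=\dim R$, i.e. $R$ is Cohen--Macaulay. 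Being a quotient of a regular local ring, $R$ admits a canonical module $\omega_R$, so the canonical dual $(\Omega_{R/k}^{*})^{\dagger}$ is defined. I would also check that $\Omega_{R/k}^{*}\neq 0$: in characteristic zero $R$ is generically smooth, so $\Omega_{R/k}$ has positive rank at every minimal prime and therefore its dual does not vanish.

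The core of the proof is then a single invocation of Proposition \ref{Gor-crit-new}(1) with $M=\Omega_{R/k}$ and $d=1$. Its hypotheses, namely $M^{*}\neq 0$ and $\pd_R(M^{*})^{\dagger}<\infty$, are exactly what was verified above, so I would conclude at once that $\Omega_{R/k}^{**}$ is free and, crucially, that $R$ is Gorenstein.

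It remains to upgrade Gorensteiness to regularity, and this is precisely where torsionfreeness is used. Over the one-dimensional Cohen--Macaulay ring $R$, torsionfreeness of $\Omega_{R/k}$ forces $\depth_R\Omega_{R/k}\geq 1=\dim R$ (any nonzerodivisor of $R$, which exists by prime avoidance, is regular on a torsionfree module), so $\Omega_{R/k}$ is maximal Cohen--Macaulay. As $R$ is now Gorenstein, every maximal Cohen--Macaulay module is totally reflexive, in particular reflexive; thus $\Omega_{R/k}\cong\Omega_{R/k}^{**}$, which is free by the previous paragraph. Finally, freeness of $\Omega_{R/k}$ is equivalent to regularity of $R$ (\cite[Theorem 7.2]{K}, resp. \cite[Theorem 14.1]{K}), which closes the argument.

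Since Proposition \ref{Gor-crit-new}(1) carries the homological weight and may be assumed, I expect no serious obstacle; the only delicate points are the two bookkeeping verifications, that $\Omega_{R/k}^{*}\neq 0$ (so the proposition applies) and that torsionfree implies reflexive here, the latter being exactly the mechanism that converts the Gorenstein conclusion into the sought regularity.
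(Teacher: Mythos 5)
Your proposal is correct and rests on the same pivot as the paper's proof: both apply Proposition \ref{Gor-crit-new}(1) to $M=\Omega_{R/k}$, via the identification $\Omega_{R/k}^*\cong{\rm Der}_k(R)$, to conclude that $R$ is Gorenstein and $\Omega_{R/k}^{**}$ is free, and then finish by showing $\Omega_{R/k}$ is reflexive so that freeness of the bidual gives freeness of $\Omega_{R/k}$ and hence regularity. The only divergence is in that last reflexivity step: the paper first upgrades torsionfree to torsionless using the generic rank and then invokes Corollary \ref{d=1torsionless}, whose proof deduces reflexivity from the transpose/$\Ext$-vanishing criteria of \cite{mas}; you instead check $\Omega_{R/k}^*\neq 0$ directly from the generic rank and obtain reflexivity by the more elementary route that a torsionfree module over the one-dimensional Cohen--Macaulay (now Gorenstein) ring has positive depth, hence is maximal Cohen--Macaulay and therefore totally reflexive. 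Both finishing mechanisms are valid, so your argument goes through as written.
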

\begin{proof} In the present setup, being a reduced local ring of dimension 1, $R$ is  Cohen-Macaulay. Its differential module $\Omega_{R/k}$, being torsionfree by hypothesis and possessing a generic rank  (which for completeness is equal to $\dim\,R=1$), must also be torsionless (see \cite[Exercise 1.4.18]{BH93}). In addition, we have $(\Omega_{R/k}^*)^{\dagger}\cong {\rm Der}_k(R)^{\dagger}$. Now we are in a position to apply Corollary \ref{d=1torsionless} with $M=\Omega_{R/k}$ to conclude that $\Omega_{R/k}$ is free; as recalled in the proof of Corollary \ref{first}, this means that $R$ is regular.
\end{proof}

\begin{rem} A few comments are in order. Berger's conjecture is known to be true if 
$\pd_R{\rm Der}_k(R)<\infty$. This is because in this case the local ring $R$ must be normal, and hence, being one-dimensional, necessarily regular. However, and inspired by Corollary \ref{ber} above, we wonder whether the conjecture holds true provided that 
$$\pd_R{\rm Der}_k(R)^*<\infty,$$ where we recall that the module ${\rm Der}_k(R)^*$ (the bidual of $\Omega_{R/k}$) has been considered in the literature and is dubbed {\it module of Zariski differentials of $R$ over $k$} (see, e.g., \cite{P}).
\end{rem}

\subsection{Vasconcelos' conjecture on normal modules}  In this last subsection, let $R=S/I$ where $S$ is a local ring and $I$ is an ideal with $\pd_SI<\infty$ (typically, $S$ is taken regular). Note this setting is far more general than (\ref{two-classes}). There is a conjecture by Vasconcelos (see \cite[p.\,373]{V}) which states that $I$ must be generated by a regular sequence if  $\pd_{R}{\rm N}_{R}<\infty$, where ${\rm N}_{R}$ is the normal module of $R$, i.e.,
$${\rm N}_{R}={\rm Hom}_{R}(I/ I^2, R)=(I/I^2)^*.$$

\begin{cor}\label{second} Let $R$ be as above, and write $t={\rm depth}\,R$. Then, Vasconcelos' conjecture holds true if there exists an integer $r$ with $0\leq r\leq t$ such that $I/I^2$  satisfies $(\widetilde S_r)$ and, if $r<t$, $$\Ext^{1\leq i\leq t-r}_R(I/I^2 ,R)=0.$$ In particular, the conjecture is true when $R$ is Cohen-Macaulay and $I/I^2$ is maximal Cohen-Macaulay.
\end{cor}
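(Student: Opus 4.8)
The plan is to reduce the statement to Proposition \ref{dual} applied to the conormal module $M = I/I^2$, whose algebraic dual $M^* = (I/I^2)^* = {\rm N}_R$ is precisely the normal module. First I would observe that the hypothesis $\pd_R{\rm N}_R<\infty$ appearing in Vasconcelos' conjecture is exactly the condition $\Hdim_RM^*<\infty$ in the case $\Hdim_R=\pd_R$ (see Convention \ref{conv}), while the remaining hypotheses of the corollary --- that $I/I^2$ satisfies $(\widetilde S_r)$ and, when $r<t$, that $\Ext^{1\le i\le t-r}_R(I/I^2,R)=0$ --- match verbatim the hypotheses imposed on $M$ in Proposition \ref{dual}. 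Thus Proposition \ref{dual} yields at once $\pd_R(I/I^2)=0$; in other words, the conormal module $I/I^2$ is a free $R$-module.

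With freeness of $I/I^2$ in hand, the conclusion follows from the classical theorem of Ferrand and Vasconcelos: if $I$ is an ideal of finite projective dimension in a local ring $S$ whose conormal module $I/I^2$ is free over $R=S/I$, then $I$ is generated by a regular sequence. Since by the standing hypothesis $\pd_SI<\infty$, this theorem applies and shows that $I$ is a complete intersection, which is precisely the assertion of Vasconcelos' conjecture.

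For the \emph{in particular} clause, I would recall (exactly as in the proof of Corollary \ref{cor1}) that over a Cohen--Macaulay local ring every maximal Cohen--Macaulay module satisfies $(\widetilde S_n)$ for all $n\ge 0$. Hence, when $R$ is Cohen--Macaulay and $I/I^2$ is maximal Cohen--Macaulay, one may take $r=t$; the condition $r<t$ then fails, so the Ext-vanishing hypothesis becomes vacuous, and the first part of the corollary applies directly.

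I do not anticipate a genuine obstacle, since the substantive homological work has already been carried out in Proposition \ref{dual}; the present statement is essentially a clean application of it. The only points requiring care are confirming that the Serre-type condition and the Ext-vanishing hypotheses are phrased for $M=I/I^2$ exactly as demanded by Proposition \ref{dual}, and invoking the Ferrand--Vasconcelos result in its correct form. In particular, it is the finiteness of $\pd_SI$ over the ambient ring $S$, and not merely freeness of $I/I^2$ over $R$ on its own, that licenses the passage from a free conormal module to the regular-sequence conclusion.
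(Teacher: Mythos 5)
Your proposal is correct and follows essentially the same route as the paper: apply Proposition \ref{dual} to $M=I/I^2$ (noting $M^*={\rm N}_R$) to get freeness of the conormal module, then invoke Vasconcelos' classical result \cite{V0} (the Ferrand--Vasconcelos theorem) using $\pd_SI<\infty$ to conclude that $I$ is generated by a regular sequence. The handling of the \emph{in particular} clause via $(\widetilde S_t)$ for maximal Cohen--Macaulay modules also matches the paper's reasoning.
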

\begin{proof} By Proposition \ref{dual}, we obtain that $I/I^2$ is free. Since $\pd_SI<\infty$, this forces $I$ to be generated by an $S$-sequence (see \cite{V0}).
    \end{proof}

\begin{rem} Maintain the above setup and notations. Using Proposition \ref{newhom}(4), our Corollary \ref{second} is immediately seen to admit the following generalization. Let $r$ be an integer with $0\leq r\leq t$ such that $I/I^2$ satisfies $(\widetilde S_r)$ and, if $r<t$, $\Ext^{1\leq i\leq t-r}_R(I/I^2,R)=0$. Suppose there exists a finite $R$-module $N\neq 0$ satisfying  $\pd_R N\leq t-r$ and $$\pd_R{\rm Hom}_{R}(I/ I^2, N)<\infty.$$ Then, $I$ is generated by an $S$-sequence (to retrieve Corollary \ref{second}, pick $N=R$). 
\end{rem}

This remark seems to suggest the following potential generalization of Vasconcelos' conjecture.

\begin{ques} Let $R$ be as above. Suppose there exists a finite $R$-module $N\neq 0$ satisfying $\pd_RN<\infty$ and $\pd_R{\rm Hom}_{R}(I/ I^2, N)<\infty$. Is it true that $R$ must be a complete intersection ring?
    \end{ques}

Next, we exemplify the Cohen-Macaulay case of Corollary \ref{second} in its contrapositive form, that is, we proceed to illustrate the property $$\pd_{R}{\rm N}_{R}=\infty.$$ It is worth mentioning that in this case the normal module can benefit, in particular, from the well-established theory of infinite free resolutions (see \cite{avra0}), which includes, e.g., the investigation of eventual periodicity of resolutions as well as connections to complexity, curvature, and other numerical invariants of modules.   

\begin{eg} Let $k$ be a field and
$R=S/I=k[x, y, z]_{(x, y, z)}/I$, where $$I=(x^{\ell}, xy^{\ell -2}z, y^{\ell -1}z), \quad \ell \geq 3.$$ Note $R$ is a (non-Gorenstein) Cohen-Macaulay almost complete intersection  local ring. Moreover, $S/I^2$ has the same feature; this has been observed in \cite[Proposition 2.5]{monom} by means of the theory of Buchberger graphs, but here we provide a much simpler argument. It suffices to notice that, for any given $\ell \geq 3$, a minimal free resolution of $I^2$ is given by
$0\to S^5\stackrel{\varphi}{\to} S^6 \to I^2\to 0$, where
    $$\varphi ~ = ~\left(\begin{array}{ccccc}
0  & 0 & 0 & 0 & -y^{\ell -2}z\\
0 & 0 & -y & 0 & x^{\ell -1}\\
0  & 0 & x & -y^{\ell -3}z & 0\\
0  & -y & 0 & x^{\ell -2}& 0\\
-y  & x & 0 & 0 & 0\\
x  & 0 & 0 & 0 & 0
\end{array}\right),$$ which, by the classical Hilbert-Burch theorem, yields the claim. Thus, by the short exact sequence $$0\to I/I^2\to S/I^2\to S/I\to 0$$ we deduce that the conormal module $I/I^2$ is maximal Cohen-Macaulay. Consequently, Corollary \ref{second} gives $\pd_{R}{\rm N}_{R}=\infty$.
\end{eg}

\begin{eg} Let $c$ be an integer with $5\leq c\leq 10$, and let $J$ be the homogeneous ideal of the standard graded polynomial ring ${\mathbb Q}[x_0, \ldots, x_c]$ defining a set of $$1+c+ \left\lceil\frac{c(c-1)}{6}\right\rceil$$ general points in $c$-dimensional projective space over ${\mathbb Q}$ (here, as usual, $\left\lceil {q}\right\rceil$ denotes the smallest integer which is bigger than, or equal to, a given number $q\in {\mathbb Q}$). So, e.g., if $c=5$ then $J$ defines $10$ general points in ${\mathbb P}_{\mathbb Q}^5$. Now consider the regular local ring $S={\mathbb Q}[x_0, \ldots, x_c]_{(x_0, \ldots, x_c)}$ and the ideal $I=JS$. Then, according to \cite[paragraph after Conjecture 7.2]{MX}, the quotient $R=S/I$ must be a non-Gorenstein (hence $I$ is not generated by an $S$-sequence) Cohen-Macaulay local ring and the $R$-module $I/I^2$ is (necessarily maximal) Cohen-Macaulay. By virtue of Corollary \ref{second}, we deduce $\pd_{R}{\rm N}_{R}=\infty$.
\end{eg}

We close the paper by establishing the following result, related to the module of differentials. Recall that a finite $R$-module $M$ is {\it almost Cohen-Macaulay} if ${\rm depth}_RM\geq {\rm dim}\,R -1$.

\begin{cor}\label{second-omega} Let $R$ be as in {\rm (}\ref{two-classes}{\rm )}, and suppose $R$ is Cohen-Macaulay. Then, Vasconcelos' conjecture holds true if $\Omega_{R/k}$ is almost Cohen-Macaulay.
\end{cor}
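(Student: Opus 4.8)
The plan is to assume $\pd_R{\rm N}_R<\infty$ (where ${\rm N}_R=(I/I^2)^*$) and to deduce that $I$ is generated by a regular sequence. The bridge from the normal module to the differential module is the conormal (second fundamental) exact sequence: since $S$ is regular, $\Omega_{S/k}$ is free, say of rank $m$, and the sequence reads
\[
I/I^2\xrightarrow{\ \delta\ }R^m\longrightarrow \Omega_{R/k}\longrightarrow 0.
\]
Writing $C=\operatorname{im}\delta$ and $K=\ker\delta$, I would split this into $0\to C\to R^m\to\Omega_{R/k}\to 0$ and $0\to K\to I/I^2\to C\to 0$, and then extract information about $I/I^2$ (hence about ${\rm N}_R$) from each piece.

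First I would exploit that $\Omega_{R/k}$ is almost Cohen--Macaulay. Applying the depth lemma to $0\to C\to R^m\to\Omega_{R/k}\to 0$ together with $\depth_R\Omega_{R/k}\ge d-1$ gives $\depth_RC\ge d$, so $C$ is maximal Cohen--Macaulay (and hence $C_{\mathfrak p}$ is maximal Cohen--Macaulay for every $\mathfrak p$, as $R$ is Cohen--Macaulay). Next, because $I$ is radical and $\operatorname{char}k=0$, the ring $R$ is generically smooth; localizing the conormal sequence at a minimal prime $\mathfrak p$, the residue field $\kappa(\mathfrak p)$ is separable (hence $0$-smooth) over $k$, so there the sequence is left exact and $K_{\mathfrak p}=0$. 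Thus $K=H_1(\mathbb{L}_{R/k})$ is a torsion module, supported in codimension $\ge 1$; since $R$ is Cohen--Macaulay, hence equidimensional and without embedded primes, any nonzero submodule of $R$ has dimension $d$, so $\Hom_R(K,R)=0$. Applying $\Hom_R(-,R)$ to $0\to K\to I/I^2\to C\to 0$ then yields ${\rm N}_R=(I/I^2)^*\cong C^*$, whence $\pd_RC^*=\pd_R{\rm N}_R<\infty$.

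At this point $C$ is maximal Cohen--Macaulay with $\pd_RC^*<\infty$, so Corollary \ref{cor1} (taking $M=C$, $N=R$, $r=d$) forces $C$ to be free; feeding this into $0\to C\to R^m\to\Omega_{R/k}\to 0$ gives a length-one free resolution, so $\pd_R\Omega_{R/k}\le 1<\infty$. To finish I would invoke the fact that, for $R$ as in (\ref{two-classes}), finiteness of $\pd_R\Omega_{R/k}$ forces $R$ to be a complete intersection, i.e. $I$ to be generated by a regular sequence; equivalently, once $K=H_1(\mathbb{L}_{R/k})$ is shown to vanish one has $I/I^2=C$ maximal Cohen--Macaulay and Corollary \ref{second} applies directly. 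I expect this final implication to be the main obstacle: everything up to $\pd_R\Omega_{R/k}<\infty$ is formal (depth lemma, generic smoothness in characteristic $0$, and Proposition \ref{dual}), but passing from finite projective dimension of the differential module to the complete intersection property is precisely the assertion that the torsion $K$ vanishes, and this cannot be read off from the freeness of $C$ alone—it requires a genuine input on the cotangent complex (in the spirit of the resolution of Quillen's conjecture), which I would cite rather than reprove.
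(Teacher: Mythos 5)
Your argument is, in substance, the paper's own proof: your $C=\operatorname{im}\delta$ and $K=\ker\delta$ are exactly the modules $I/I^{(2)}$ and $I^{(2)}/I^2$ appearing there; your depth-lemma step, the identification ${\rm N}_R=(I/I^2)^*\cong C^*$ (which you justify via generic smoothness in characteristic zero, rather than by naming the kernel as the torsion submodule $I^{(2)}/I^2$, but the content is the same), and the application of Corollary \ref{cor1} to get $C$ free and hence $\pd_R\Omega_{R/k}\leq 1$ all coincide with the paper's proof. Up to that point the proposal is correct.

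The gap is in how you propose to finish. The statement you want to cite --- that for $R$ as in (\ref{two-classes}) mere finiteness of $\pd_R\Omega_{R/k}$ forces $R$ to be a complete intersection --- is not an available theorem: it is itself a well-known open conjecture (settled by Avramov and Herzog in the graded case, open in general), and it does not follow from Avramov's resolution of Quillen's conjecture, since that result concerns the full cotangent complex while $\Omega_{R/k}$ is only its $H_0$. So, as written, your last step rests on an unproved assertion. The repair, however, is already in your hands: you have established the much stronger bound $\pd_R\Omega_{R/k}\leq 1$, and for projective dimension \emph{at most one} the implication ``$\pd_R\Omega_{R/k}\leq 1$ implies $I$ is generated by a regular sequence'' is a classical result of Vasconcelos, which is precisely what the paper cites at this point (\cite[Remark (b), p.\,374]{V}). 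In other words, no cotangent-complex input is needed; your worry arises only because you weakened $\pd_R\Omega_{R/k}\leq 1$ to $\pd_R\Omega_{R/k}<\infty$ before looking for a reference. (Your side remark is also correct, and for the right reason: freeness of $C$ only yields $I/I^2\cong K\oplus C$, so one cannot conclude $K=0$ and route the argument through Corollary \ref{second} instead.)
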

\begin{proof} Since $I$ is radical and ${\rm char}\,k=0$, the so-called conormal sequence takes the form 
\begin{equation}\label{con}0\to I/I^{(2)}\to R^m\to \Omega_{R/k}\to 0,\end{equation} where
$I^{(2)}$ denotes the second symbolic power of $I$ (i.e., the ideal formed by the $g\in S$ such that $gf\in I^2$ for some $R$-regular element $f$).
This exact sequence shows, by means of the standard depth lemma, that $I/I^{(2)}$ is maximal Cohen-Macaulay since $\Omega_{R/k}$ is almost Cohen-Macaulay. On the other hand, dualizing the short exact sequence $$0\to I^{(2)}/I^2\to I/I^2\to I/I^{(2)}\to 0$$ and using that $I^{(2)}/I^2$ is in fact the $R$-torsion of $I/I^2$, we obtain $$(I/I^{(2)})^*\cong (I/I^2)^*={\rm N}_R.$$ Now, applying Corollary \ref{cor1}, we deduce that 
$I/I^{(2)}$ is free. By the sequence (\ref{con}), this implies $\pd_R\Omega_{R/k}\leq 1$, which according to \cite[Remark (b), p.\,374]{V} forces $I$ to be generated by a regular sequence, as needed.  \end{proof}

Clearly, $\Omega_{R/k}$ is  almost Cohen-Macaulay (with $R$ as in Corollary \ref{second-omega}) if, for example,  ${\rm dim}\,R=2$ and $\Omega_{R/k}$ is torsionfree, or if ${\rm dim}\,R=3$ and $\Omega_{R/k}$ is reflexive. In particular, the latter situation suggests the question as to when $\Omega_{R/k}$ is reflexive, and we refer to \cite[Remark 4, p.\,10]{P}  for a list of instances where this property holds.

\bigskip

\noindent{\bf Acknowledgements.} The first-named author was partially supported by the Charles University Research Center program No. UNCE/24/SCI/022 and a grant GACR 23-05148S from the Czech Science Foundation. The second-named author was supported by CNPq (grants 200863/2022-3 and 170235/2023-8). The third-named author was partially supported by CNPq (grants 406377/2021-9 and 313357/2023-4).

\end{document}